\begin{document}
\newtheorem{theorem}{Theorem} [section]
\newtheorem{tha}{Theorem}
\newtheorem{conjecture}[theorem]{Conjecture}
\renewcommand{\thetha}{\Alph{tha}}

\newtheorem{corollary}[theorem]{Corollary}
\newtheorem{lemma}[theorem]{Lemma}
\newtheorem{proposition}[theorem]{Proposition}
\newtheorem{construction}[theorem]{Construction}
\newtheorem{claim}[theorem]{Claim}
\newtheorem{definition}[theorem]{Definition}
\newtheorem{observation}{Observation}
\newtheorem{question}{Question}
\newtheorem{remark}[theorem]{Remark}
\newtheorem{algorithm}[theorem]{Algorithm}
\newtheorem{example}[theorem]{Example}

\def\endproofbox{\hskip 1.3em\hfill\rule{6pt}{6pt}}
\newenvironment{proof}%
{%
\noindent{\it Proof.}
}%
{%
 \quad\hfill\endproofbox\vspace*{2ex}
}
\def\qed{\hskip 1.3em\hfill\rule{6pt}{6pt}}

\parindent=15pt

\def\dim{{\rm dim}}
\def \e{\epsilon}
\def\vec#1{{\bf #1}}
\def\va{\vec{a}}
\def\vb{\vec{b}}
\def\vc{\vec{c}}
\def\vx{\vec{x}}
\def\vy{\vec{y}}
\def\cM{{\mathcal M}}
\def\cH{{\cal H}}
\def\cF{{\mathcal F}}
\def\bE{\mathbb{E}}
\def\bP{\mathbb{P}}
\def\cA{{\mathcal A}}
\def\cB{{\mathcal B}}
\def\cJ{{\mathcal J}}
\def\cM{{\mathcal M}}
\def\cS{{\mathcal S}}
\def\cT{{\mathcal T}}
\def\wt#1{\widetilde{#1}}
%%%%%%%%%%%%%%%%%%%%%%%%%%%%%%%%%%%%%%%%%%%%%%%%%%%%%%%%%%%%%

\title{Embedding multidimensional grids into optimal hypercubes}
\author{Zevi Miller \thanks{Dept. of
Mathematics, Miami University, Oxford, OH 45056, USA,
millerz@muohio.edu} \, Dan Pritikin
\thanks{Dept. of Mathematics,
Miami University, Oxford, OH 45056, USA, pritikd@muohio.edu.}\, I.H. 
Sudborough \thanks{Program in Computer Science, University of Texas-Dallas, 
Richardson, Texas} }

\maketitle

\begin{abstract}
Let $G$ and $H$ be graphs, with $|V(H)|\geq |V(G)| $, and $f:V(G)\rightarrow V(H)$ a one to one map of their vertices.   Let $dilation(f) = max\{ dist_{H}(f(x),f(y)): xy\in E(G)  \}$, where $dist_{H}(v,w)$ is 
the distance between vertices $v$ and $w$ of $H$.  Now let $B(G,H)$ = $min_{f}\{ dilation(f) \}$, over all such maps $f$.

The parameter $B(G,H)$ is a generalization of the classic and well studied ``bandwidth" of $G$, defined as $B(G,P(n))$, where $P(n)$ is the path on $n$ points 
and $n = |V(G)|$.  Let $[a_{1}\times a_{2}\times \cdots \times a_{k}  ]$ be the $k$-dimensional grid graph with integer values $1$ through $a_{i}$ in the 
$i$'th coordinate.  In this paper, we study $B(G,H)$ in the case when $G = [a_{1}\times a_{2}\times \cdots \times a_{k}  ]$ and 
$H$ is the hypercube $Q_{n}$ of dimension $n = \lceil log_{2}(|V(G)|)   \rceil$, the hypercube of smallest dimension having at least as many points as $G$.  Our main result is that 
 $$B( [a_{1}\times a_{2}\times \cdots \times a_{k}  ],Q_{n}) \le 3k,$$
 provided $a_{i} \geq 2^{22}$ for each $1\le i\le k$.  For such $G$, the bound $3k$ improves on the previous best upper bound 
 $4k+O(1)$.  Our methods include an application of Knuth's result on two-way rounding and of the existence of spanning regular cyclic caterpillars in the hypercube.

\end{abstract}

%%%%%%%%%%%%%%%%%%%%%%%%%%%%%%%%%%%%%%%%%%%%%%%%%%%%%%%%%%%%%%%%%%%%%%%%%%%%%
\section{Introduction}\label{intro}

In this paper we will usually follow standard graph theoretic terminology, as may be found for example in \cite{W}.  We let $P(t)$ stand for the path 
on $t$ vertices.  The cartesian product $G\times H$ of two graphs $G$ and $H$ is the graph with vertex set $V = \{ (v,w): v\in V(G), w\in V(H) \}$ and edge set 
$E = \{ (v,w)(v',w'):$ either $v = v'$ and $ww'\in E(H)$, or $vv'\in E(G)$ and $w = w'  \}$.  All logarithms are taken base $2$.  

\subsection{Background and main result}\label{subintro}
The analysis of how effectively one network can simulate another, and the resulting implications for optimal design of parallel computation networks, are important topics in graph 
theoretic aspects of computer science.  One of the measures of the effectiveness of a simulation is the \textit{dilation} of the corresponding map (or ``embedding") of networks, defined as follows.  Let 
$G$ and $H$ be two graphs and $f:V(G)\rightarrow V(H)$ a map from the vertices of $G$ to those of $H$.  As a convenience we typically write such a map 
as $f:G\rightarrow H$, with the meaning that it is a map from vertices to vertices.  Similarly we sometimes write $|G|$ for $|V(G)|$. 
Apart from an exception indicated below in a review of 
previous research on our topic,  we will suppose that $|V(G)|\le |V(H)|$ and that $f$ 
is one to one.  Whether $f$ is one to one or not, we let $dilation(f) = max\{ dist_{H}(f(x),f(y)): xy\in E(G)  \}$, where $dist_{H}(v,w)$ is the distance between vertices $v$ and $w$ of $H$, defined 
as the minimum number of edges in any path of $H$ joining $v$ and $w$.  Thus $dilation(f)$ is the maximum ``stretch" 
experienced by any edge of $G$ under the map $f$.  Now define $B(G,H)$ to be $min_{f}\{ dilation(f) \}$, over all such maps $f$.  
Note that $B(G,H)$ is a generalization of the classic and well studied ``bandwidth" of $G$, 
defined as $B(G,P(n))$, where $n = |V(G)|$.   

The study of $B(G,H)$ arises when each of $G$ and $H$ is a computation network, and the goal is to have $H$ simulate a computation in $G$.  
A given map $f$ indicates how the vertices of $H$ play the roles of the vertices of $G$, and $dilation(f)$ is a measure 
of the communication delay in this roleplaying. A message between adjacent vertices $x$ and $y$ in $G$ 
taking unit time would become a message between $f(x)$ and $f(y)$ in $H$ taking 
time $dist_{H}(f(x),f(y))$, which in the worst case is $dilation(f)$ if a shortest path in $H$ joining $f(x)$ and $f(y)$ for this message is used.  Indeed the delay may 
be worse when one considers the full simulation, requiring in addition 
to $f$ a \textit{routing path} for each edge $xy\in E(G)$, namely, a path in $H$ (not necessarily shortest) joining $f(x)$ and $f(y)$. 
So let the \textit{edge congestion} of $f$ be the maximum, over all 
edges $vw\in E(H)$, of the number of routing paths in $H$ that contain $vw$.  The edge congestion of $f$ is then 
an additional contribution to the communication delay of the embedding $f$. 

In this paper we obtain upper bounds on $B(G,H)$ when $G$ is a multidimensional grid and $H$ is the smallest hypercube 
having at least $|V(G)|$ vertices.  
To clarify, let $a_{i}\geq 2$, $1\le i\le k$, be integers.  
The $k$-dimensional grid $G = [a_{1}\times a_{2}\times \cdots \times a_{k}  ]$ is the graph with  
vertex set $V(G) = \{ x = (x_{1}, x_{2}, \ldots, x_{k}): x_{i}$ an integer, $1\le x_{i}\le a_{i} \}$ and edge set 
$E(G) = \{ xy: \sum_{i=1}^{k}|x_{i} - y_{i}| =1   \}$.  So two vertices of $G$ are joined by an edge precisely when they disagree in exactly one 
coordinate, and in that coordinate they differ by $1$.  
It follows that for $x,y\in V(G)$ we have $dist_{G}(x,y) = \sum_{i=1}^{k}|x_{i} - y_{i}|$. 
One can also write $G$ as the cartesian product of paths $G = P(a_{1})\times P(a_{2})\times \cdots \times P(a_{k})$.

The \textit{n-dimensional hypercube} $Q_{n}$ is the $n$-dimensional grid $[2\times 2\times \cdots \times 2 ]$. We follow the traditional view whereby 
$V(Q_{n})$ is the set of all strings of length $n$ over the alphabet $\{0,1\}$, where two such strings are joined by an edge if they disagree 
in exactly one coordinate. This departs in a trivial way from our notation above, where we would have required $1\le x_{i}\le 2 $.  
Clearly $|V(Q_{n})| = 2^{n}$ and we let $Opt(G)$ be the smallest hypercube containing at least $|V(G)|$ vertices, so $Opt(G) = Q_{t}$ where $t =  \lceil log_{2}(|V(G)|)  \rceil$.

There is a substantial literature on the simulation of various networks by hypercubes and their related networks; the butterfly, shuffle exchange and DeBruijn graphs.   
See the books \cite{Lei} and  \cite {RH} for excellent expositions on these topics, both emphasizing bounds on dilation and congestion in graph embeddings; where the first also includes 
routing and implementation of various algorithms while the second gives a unified approach to applying separator theorems for deriving such bounds. 
An early survey on embedding graphs into hypercubes \cite{LSt}  
mentions necessary and sufficient conditions (originating in  \cite{HM}) for a graph to be a subgraph of some hypercube, and also 
the fact that for the complete binary tree $T_{n}$ on $2^{n}-1$ vertices there is an embedding $f: T_{n}\rightarrow Q_{n}$ such that for every edge $xy\in E(T_{n})$ we have $dist_{Q_{n}}(f(x),f(y)) = 1$ with 
the exception of a single edge where this distance is $2$  \cite{HL}.  In \cite{BCLR} it is shown how to embed any $2^{n}$ node bounded degree tree into $Q_{n}$ with $O(1)$ dilation  and $O(1)$ 
edge congestion, as $n$ grows.  In the same paper these results are extended to embedding bounded degree graphs with $O(1)$ separators.  In \cite{Lei} many-to-one maps of binary trees into hypercubes are considered, letting the 
\textit{load} be the maximum number of tree nodes mapped onto a hypercube node.  Using probabilistic methods 
and error correcting codes it is shown how to embed an $M$ node binary tree in an 
$N$ node hypercube with dilation 1 and load $O(\frac{M}{N} + log(N)  )$, and how to perform the same embedding with dilation $O(1)$ and load $O(\frac{M}{N} + 1)$.

Concerning the embedding of multidimensional grids into hypercubes, observe first that if $p_{1}, p_{2}, \ldots,p_{r}$ are 
positive integers summing to $n$, and $G = [P(2^{p_{1}})\times P(2^{p_{2}})\times \cdots \times P(2^{p_{r}})]$, then $Q_{n} = Opt(G)$ and $Q_{n}$ contains $G$ as a spanning subgraph.   Thus $B(G,Opt(G)) = 1$ 
in this case.  In fact one can show that $ [a_{1}\times a_{2}\times \cdots \times a_{k} ]$ is a 
subgraph of $Q_{n}$ if and only if $n \geq  \lceil log(a_{1}) \rceil + \lceil log(a_{2}) \rceil + \cdots + \lceil log(a_{k}) \rceil $; see Problem 3.20 
in \cite{Lei}.  Answering a question posed  in  \cite{LSt} about $2$-dimensional grids $G = [a_{1}\times a_{2}]$, 
it is shown in \cite{Ch} and in \cite{CHL} that $B(G,Opt(G)) \le 2$.  In \cite{Ch} it is also shown for arbitrary 
multidimensional grids $G = [a_{1}\times a_{2}\times \cdots \times a_{k} ]$ that $B(G,Opt(G)) \le 4k+1$.  Independently it was shown in \cite{JL} that $B(G,Opt(G)) \le 4k-1$ for such $G$, this upper bound 
being realized by a parallel algorithm on the hypercube. Still for such $G$, it was shown in \cite{BMS} that $B(G,Opt(G))\le k$, assuming quite involved and restrictive inequality 
constraints on the $a_{i}$. 
Returning to dimension $2$,  it was shown in \cite{KL} that determining whether a given graph $G$ can be embedded in $Opt(G)$ with 
edge congestion $1$ is NP-complete.  Subsequently it was shown   
 in \cite{RS} that any $G = [a_{1}\times a_{2}]$ can be embedded in $Opt(G)$ with edge congestion 
at most $2$ and dilation at most $3$.  Following up on a question posed in \cite{LSt}, the issue of many-to-one 
embeddings of $2$ and $3$ dimensional grids $G$ into hypercubes was explored in \cite{MS}.  For these results, let $Opt(G)/2^{t}$ denote 
the hypercube of dimension $\lceil log(|G|) \rceil - t$.  If $f: G\rightarrow Opt(G)/2^{t}$ is a many-to-one map, then as above let the \textit{load} of $f$ be 
max$\{|f^{-1}(z)|: z\in Opt(G)/2^{t}\}$. It was shown in \cite{MS} that for a $2$-dimensional 
grid $G$ there is a many-to-one map $f: G\rightarrow Opt(G)/2^{t}$ of dilation $1$ 
and load at most $1+2^{t}$, and when $G$ is $3$-dimensional there is a map $f: G\rightarrow Opt(G)/2$ of 
dilation at most $2$ and load at most $3$, and a map 
$f: G\rightarrow Opt(G)/4$ of dilation at most $3$ and load at most $5$.

The main result of the present paper is that $B( [a_{1}\times a_{2}\times \cdots \times a_{k}  ],Q_{n}) \le 3k$,  provided $a_{i} \geq 2^{22}$ for each $1\le i\le k$.  
This improves on the $4k-1$ bound above under this condition on the $a_i$.  We construct a one to one map $H^{k}:G \rightarrow Opt(G)$ 
realizing $dilation(H^{k})\le 3k$ and having congestion $O(k)$.  
Our construction uses the technique of two way rounding and the existence of regular spanning cyclic caterpillars in the hypercube.

\subsection{Some notation}\label{notation}                                       

We will need to consider multidimensional grids for which each factor (in the cartesian product) with one possible exception is a path $P(m)$, where $m$ is a power of $2$ and varies with the factor, 
as these will play the role 
of successive approximations to $Opt(G)$.  Let $e_{i} = \lceil log_{2}(a_{1}a_{2}\cdots a_{i})  \rceil$ for $1\le i\le k$, with $e_{0} = 0$.   
Letting $p_{i} = e_{i} - e_{i-1}$ for $1\le i\le k$, we 
let $Opt'(G) =  P(2^{p_{1}})\times P(2^{p_{2}})\times \cdots \times P(2^{p_{k}})$. So $Opt'(G)$ is a spanning subgraph of $Opt(G)$. For any $1\le t\le k$ let 
$\langle Y_{t} \rangle = P(2^{p_{1}})\times P(2^{p_{2}})\times \cdots \times P(2^{p_{t}}) $, with these $p_{i}$.  We let $Y_{t} = \langle Y_{t-1} \rangle \times P(l)$, where 
$l$ is large enough.  Thus $Y_{t}$ is 
the $t$-dimensional 
grid $P(2^{e_{1}})\times P(2^{e_{2}-e_{1}})\times \cdots \times P(2^{e_{t-1}-e_{t-2}})\times P(l)$.  The grids $Y_{2}$, $Y_{3}$,$\ldots$ will be the 
aforementioned successive approximations to $Opt(G)$.  We will construct one to one maps $f_{i}:G\rightarrow Y_{i}$, $2\le i\le k$.  The final map $f_{k}$ 
will satisfy $f_{k}(G)\subseteq Opt'(G)\subseteq Opt(G)$.

For any point $x$ in a multidimensional grid, we let $x_{i}$ be its $i$'th coordinate (as suggested above), and when $i\le j$ we let $x_{i\rightarrow j}$ be the $(j-i+1)$-tuple 
$(x_{i}, x_{i+1}, \ldots,x_{j})$.  So for example, let $f:G\rightarrow H$ be a map where $G$ and $H$ are both multidimensional grids and $H$ is of dimension $r$.  
Let $f(x) = (b_{1}, b_{2}, \cdots , b_{r})\in V(H)$ for some $x\in V(G)$.  
Then by our notation $f(x)_{2} = b_{2}$, while $f(x)_{1\rightarrow i} = (b_{1}, b_{2}, \cdots ,  b_{i})$.  We can also express $dilation(f)$ as 
$dilation(f) =$ max $\{ \sum_{i=1}^{r}|f(x)_{i} -f(y)_{i}|: xy\in E(G)\}$.

For $1\le t < i$, a $t$-\textit{level} of $Y_{i}$ is any $t$-dimensional subgrid of 
$Y_{i}$ obtained by fixing the last $i-t$ coordinates of points in $Y_{i}$. Recalling that $Y_{i} = \langle Y_{i-1} \rangle \times P(l)$, note that there are $l$ pairwise disjoint $(i-1)$-levels of $Y_{i}$, each isomorphic to 
$\langle Y_{i-1} \rangle$, and we denote by $Y_{i}^{c}$ the $(i-1)$-level all of whose points have last (that is, $i$'th) coordinate $c$, $1\le c\le l$. 
We also let $Y_{i}^{(r)} = \bigcup_{c=1}^{r}Y_{i}^{c}$.  For fixed $i$ and $j$, $2\le i\le k$ and $j\geq 1$,   
we denote by $S_{i}^{j}$ the subgraph 
of $Y_{i}$ induced by the vertices 
$\{ (x_{1},x_{2},\ldots ,x_{i-1}, y)\in Y_{i}: 1\le x_{q}\le 2^{e_{q}-e_{q-1}}$ for $1\le q\le i-1, 1+(j-1)2^{e_{i}-e_{i-1}}\le y\le j2^{e_{i}-e_{i-1}} \}$, and call  $S_{i}^{j}$  
an \textit{i-section} of $Y_{i}$, more precisely, the $j$'th \textit{i-section}.    
Thus we have $S_{i}^{j}\cong \langle   Y_{i-1}    \rangle \times P(2^{e_{i} - e_{i-1}}) \cong \langle Y_{i} \rangle$ for each $j$, and we let $S_{i}^{(r)} = \bigcup_{j=1}^{r} S_{i}^{j}$.

As an example, let $G = [3\times 7\times 5\times 9]$.  Then $e_{1} = 2$, $e_{2} = 5$, $e_{3} = 7$, $\langle Y_{2} \rangle = [4\times 8]$, and $Y_{2} = 4\times l$ for large $l$.  Further, 
$Y_{2}^{3} = \{ (x_{1},3): 1\le x_{1} \le 4 \} $, 
so $Y_{2}^{3}$ can be thought of as the third column of $Y_{2}$, and $Y_{2}^{(3)} = \cup_{j=1}^{3}Y_{2}^{j}$ is the graph induced 
by the union of the first three columns of $Y_{2}$.  We have $S_{2}^{j}\cong \langle Y_{2} \rangle = [4\times 8]$ for each $j\geq 1$, and $S_{2}^{j}$ is the $[4\times 8]$ 
subgrid of $Y_{2}$ induced by columns $8(j-1)+1$ through $8j$.  We have $\langle Y_{3} \rangle = [4\times 8\times 4]$, $Y_{3} = [4\times 8\times l]$ for large $l$, and 
$Y_{3}^{c} = \{ (x_{1}, x_{2}, c): 1\le x_{1}\le 4, 1\le x_{2}\le 8 \}$ for any fixed integer $c \geq 1$.  So $Y_{3}^{c}$ is the $c$'th $2$-level of $Y_{3}$ (ordered by altitude, or third coordinate), 
and $Y_{3}^{(j)} =\cup_{t=1}^{j}Y_{3}^{t}$ is the $3$-dimensional subgrid of $Y_{2}$ induced by the first $j$ many $2$-levels of $Y_{3}$.  Further 
$S_{3}^{t}\cong [4\times 8\times 4]$ is the $3$-dimensional subgrid of $Y_{3}$ induced by the set of $2$-levels $\{ Y_{3}^{c}: 4(t-1)+1\le c\le 4t \}$.  So $S_{3}^{(j)} = \cup_{t=1}^{j}S_{3}^{t}$ is the 
$3$-dimensional subgrid $[4\times 8\times 4j]$ of $Y_{3}$ induced by the first $4j$ many $2$-levels of $Y_{3}$.              

We let $u_{i}(G) = \lceil \frac{|G|}{|\langle Y_{i-1} \rangle|} \rceil = \lceil \frac{|G|}{2^{e_{i-1}}} \rceil$, which we abbreviate with just $u_{i}$ when $G$ is 
understood by context or is an arbitrary grid.  Since each $(i-1)$-level 
of $Y_{i}$ has size $|\langle Y_{i-1} \rangle| = 2^{e_{i-1}}$, $u_{i}$ is the minimum number of $(i-1)$-levels of $Y_{i}$ whose union could contain 
the image $f_{i}(G)$.  Our maps $f_{i}$ will satisfy $f_{i}(G)\subseteq Y_{i}^{(u_{i})}$ for each $2\le i\le k$.

We will need the analogue of a $t$-level for $G =  [a_{1}\times a_{2}\times \cdots \times a_{k}  ]$.   For $2\le i\le k-1$, an \textit{i-page} of $G$ is any $i$-dimensional subgrid of $G$ induced by all vertices of 
$G$ having the same last $k-i$ coordinate values.  Let $P_{i} = a_{i+1}a_{i+2}\cdots a_{k}$, which is the number distinct $i$-pages in $G$.  We define a linear ordering  $\prec_{i}$ on these $i$-pages 
as follows.  Let $D_{i}$ and  $D_{i}'$ be two $i$-pages, with fixed last $k-i$ coordinate values $c_{i+1}, c_{i+2},\ldots ,c_{k}$ and $c_{i+1}', c_{i+2}',\ldots ,c_{k}'$ 
respectively.  Then $D_{i} \prec_{i} D_{i}' $ in this ordering if at the maximum index $r$, $i+1\le r\le k$, where $c_{r}\ne c_{r}'$ we have $c_{r} < c_{r}'$.  Now index the 
 $i$-pages of $G$ relative to this ordering by $D_{i}^{j}$, $1\le j\le P_{i}$, where $r < s$ if and only if $D_{i}^{r} \prec_{i} D_{i}^{s} $.  Also let $D_{i}^{(r)} = \bigcup _{j=1}^{r}D_{i}^{j}$.
 
 As an example, consider the $4$-dimensional grid $H = [3\times 7\times 3\times 2]$, containing $6$ many $2$-pages each isomorphic to $[3\times 7]$.  
 For fixed $i$ and $j$, $1\le i\le 3$ and $1\le j\le 2$,  
 denote by $D_{2}\langle i,j \rangle$ the $2$-page of $G$ given by $D_{2}\langle i,j \rangle = \{ (x_{1}, x_{2}, i, j)\in H: 1\le x_{1}\le 3, 1\le x_{2}\le 7 \}$.  Then the above ordering of $2$-pages of $H$ is given by 
 $D_{2}^{1} = D_{2}\langle 1,1 \rangle \prec_{2} D_{2}^{2} = D_{2}\langle 2,1 \rangle \prec_{2} D_{2}^{3} = D_{2}\langle 3,1 \rangle \prec_{2} D_{2}^{4} = D_{2}\langle 1,2 \rangle \prec_{2} 
 D_{2}^{5} = D_{2}\langle 2,2 \rangle \prec_{2} D_{2}^{6} = D_{2}\langle 3,2 \rangle $.  There are two $3$-pages in $H$, given by 
 $D_{3}^{1} = \{D_{2}^{1}, D_{2}^{2}, D_{2}^{3}\}$ and  $D_{3}^{2} = \{D_{2}^{4}, D_{2}^{5}, D_{2}^{6}\}$, and we have $D_{3}^{1} \prec_{3} D_{3}^{2}$.  As this example illustrates, for $3\le i\le k$ 
 the ordering $\prec_{i-1}$ is a refinement of the ordering $\prec_{i}$ in that if $D_{i-1}^{p}\subseteq D_{i}^{p'}$ and $D_{i-1}^{q}\subseteq D_{i}^{q'}$ with $p' < q'$, then $p < q$.    
 
From here on we fix $G = [a_{1}\times a_{2}\times \cdots \times a_{k}  ]$ to be a  $k$-dimensional grid.  We summarize the above notation, together with selected notation items to be introduced later, in Appendix 2 
 for convenient reference.

\section{The $2$-dimensional mapping}\label{the 2D mapping}

Recall the $2$-dimensional grid $Y_{2} = P(2^{e_1})\times P(l)$, with $e_{1} = \lceil log_{2}(a_1) \rceil$ and $l$ sufficiently large.  
In this section we construct a map $f_{2}: G\rightarrow Y_{2}^{(u_{2})}$, and we abbreviate $m = u_{2}$ throughout this section, so $m = \lceil \frac {|G|}{2^{e_{1}}} \rceil$.  So $f_{2}$ 
will satisfy $f_{2}(G)\subseteq P(2^{e_{1}})\times P(2^{\lceil log_{2}(|G|) \rceil - e_{1}})\subset Opt(G)$.
Additional work will show that  for any edge $vw\in E(G)$ we have that $|f_{2}(v)_{1} - f_{2}(w)_{1}| + |f_{2}(v)_{2} - f_{2}(w)_{2}|$ is small.
This map $f_{2}$, resembling a map 
constructed in \cite{Ch} and \cite{RS}, will be the first step 
in an inductive construction leading to a low dilation embedding $f_{k}:G\rightarrow Opt'(G)\subseteq Opt(G)$.

We use the following notation.  Let $G(r)$ denote the infinite 2-dimensional grid having $r$ rows, so the vertex and edge 
sets of $G(r)$ are $V(G(r)) = \{ (x,y)\in \mathbb{Z}^2: 1\le x\le r, 1\le y < \infty \}$, and
$E(G(r)) = \{(x_1, y_1)(x_2, y_2): |x_1- x_2| + |y_1 - y_2| = 1\}$.  We let $C_{i}$ denote the set of vertices $(x,y)$ of $G(r)$ 
with $x = i$, and refer to this set as ``chain $i$", or the ``$i$'$th$ chain" of $G(r)$.  

We can view 
$V(G)$ as a subset of $V(G(a_{1}))$ by a natural correspondence 
$\kappa: V(G)\rightarrow V(G(a_{1}))$ defined as follows.  Let $W_{i} = \prod_{t=2}^{i}a_t$ for $2\le i\le k$.   For any vertex $x = (x_1 , x_2, \ldots, x_k)$ of $G$, let 
$\kappa(x) = (x_1 ,y)$, where $y = (x_{k} - 1)W_{k-1} + (x_{k-1} - 1)W_{k-2} + \cdots + (x_{3} - 1)W_{2} + x_{2}$.  To see the action 
of $\kappa$, let $\rho_{t}$ be the subset of $V(G)$ consisting of vertices $x\in V(G)$ with $x_{1} = t$, $1\le t\le a_{1}$. Then $\kappa$ maps the points of 
$\rho_{t}$ to the first $W_{k}$ points of $C_{t}$ in lexicographic order; that is, if $z = (t, x_{2}, x_{3},\ldots, x_{k})$ and $z' = (t, x_{2}', x_{3}',\ldots, x_{k}')$ 
are two points of $\rho_{t}$, then $\kappa (z)_{1} = \kappa (z')_{1} = t$ and $\kappa (z)_{2} < \kappa (z')_{2}$ if and only if at the largest 
index $r$ where $x_{r}\ne x_{r}'$ we have $x_{r} < x_{r}'$.  Recall now the ordering $D_{i}^{j}$, $1\le j\le P_{i}$, of $i$-pages defined at the end of the last section.  Then 
for fixed $2\le i\le k$ and $1\le j\le P_{i}$, $\kappa (D_{i}^{j})$ is the subset of points in $G(a_{1})$ given by 
$\{ (t,y): 1\le t\le a_{1}, (j-1)a_{2}a_{3}a_{4}\cdots a_{i} + 1\le y\le ja_{2}a_{3}a_{4}\cdots a_{i}             \}$.

Our method in this section is to first construct a low dilation map $f:G(a_{1})\rightarrow Y_{2}$.  We then obtain the desired $f_{2}$ as the composition 
$f_{2} = f\circ \kappa : G\rightarrow Y_{2}^{(m)}$.  For the rest of this section we literally identify any point $x\in V(G)$ with $\kappa(x)$, dropping further references to 
$\kappa$ itself.  Thus, once $f:G(a_{1})\rightarrow Y_{2}$ is constructed, our map $f_{2}: G\rightarrow Y_{2}^{(m)}$ will henceforth be viewed as the restriction of 
$f$ to $G\subset G(a_{1})$ (under the identification $x\leftrightarrow \kappa(x)$).

The map $f:G(5)\rightarrow Y_{2}$ is shown in Figure  \ref{layout of G(5)}(b), and the map $f_{2}: [3\times 7\times 4\times a_{4}]\rightarrow Y_{2}$, obtained 
by restriction from $f:G(3)\rightarrow Y_{2}$, is shown in Figure \ref{3by7}.

To begin the description of $f$, let positive integers $1\le i\le a_{1}$ and $j\geq 1$ be given.  Then $f$ will map either $1$ or $2$ points of 
chain $C_i$ to $Y_{2}^{j}$ (which recall is column $j$ of $Y_2$).  We encode this information by defining a $(0,1)$ matrix $R$ having $a_1$ rows and infinitely 
many columns indexed by the positive integers, where $R_{ij} = 1$ (resp. $R_{ij} = 0$)  means that $C_i$ has $2$ (resp. $1$) image points in 
$Y_{2}^{j}$ under the map $f$.  In the first case these $2$ image points are successive in $Y_{2}^{j}$.   

Define the first column of $R$ by 
$$R_{i1} = \lfloor (\frac {2^{e_1} - a_{1}}{a_1})i \rfloor -  \lfloor (\frac {2^{e_1} - a_{1}}{a_1})(i-1) \rfloor, 1\le i \le a_{1}.$$
For $j > 1$, let $R_{i,j} = R_{i-1,j-1}$  where the row index is viewed modulo $a_1$. 
Thus $R$ is just a circulant matrix whose columns are obtained by successive 
downward shifts of the first column with wraparound, and is illustrated for $a_{1} = 5$ in Figure \ref{layout of G(5)}(a).   The following Lemma shows that the set of $1$'s in any set of consecutive entries of some row or column of $R$ depends only on 
the number of such entries, and is one of two successive integers depending on that number.

\begin{lemma}\label{matrix R properties}
The matrix $R$ has the following properties.

\noindent{\rm(a)} The sum of entries in any column of $R$ is $2^{e_1} - a_1$.

\noindent{\rm(b)} The sum of any $t$ consecutive entries in any row or column of $R$ is either $S_t$ or $S_t + 1$, where 
$S_t =  \lfloor (\frac {2^{e_1} - a_{1}}{a_1})t \rfloor$. 

\end{lemma}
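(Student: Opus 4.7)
The plan is to exploit the Beatty-like structure of the first column. Set $\alpha = (2^{e_{1}}-a_{1})/a_{1}$, so that the definition reads
$$R_{i,1} \;=\; \lfloor \alpha i \rfloor - \lfloor \alpha(i-1) \rfloor.$$
The key arithmetic fact I will lean on throughout is that $\alpha a_{1}=2^{e_{1}}-a_{1}\in\mathbb{Z}$; this integrality is what makes the column behave cyclically and what produces the row-column symmetry.

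For part (a), the sum over column $1$ telescopes:
$$\sum_{i=1}^{a_{1}} R_{i,1} \;=\; \lfloor \alpha a_{1}\rfloor - \lfloor 0\rfloor \;=\; 2^{e_{1}}-a_{1}.$$
Since, by construction, every column of $R$ is a cyclic permutation of column $1$, every column has the same sum.

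For part (b), I would handle columns first. A window of $t$ consecutive rows $s,s+1,\dots,s+t-1$ (not wrapping) gives a telescoping sum $\lfloor \alpha(s+t-1)\rfloor-\lfloor\alpha(s-1)\rfloor$. Writing $\alpha(s-1)=p+\theta$ with $p\in\mathbb{Z}$, $\theta\in[0,1)$, we have $\alpha(s+t-1)=p+\theta+\alpha t$; since the fractional parts of $\theta$ and $\alpha t$ sum to a value in $[0,2)$,
$$\lfloor\alpha(s+t-1)\rfloor - \lfloor\alpha(s-1)\rfloor \;\in\; \{\lfloor \alpha t\rfloor,\,\lfloor\alpha t\rfloor+1\} \;=\; \{S_{t},\,S_{t}+1\}.$$
For a window that wraps past row $a_{1}$, split it into two telescoping pieces at the boundary and use $\alpha a_{1}\in\mathbb{Z}$; the two pieces combine to $\lfloor\alpha t\rfloor$ or $\lfloor\alpha t\rfloor+1$ exactly as in the non-wrapping case.

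For rows, I would reduce to the column case using the defining cyclic-shift symmetry $R_{i,m}=R_{i-m+1,\,1}$ (indices mod $a_{1}$). Under the substitution $j=i-m+1$, a window of $t$ consecutive entries $R_{i,s},R_{i,s+1},\dots,R_{i,s+t-1}$ in row $i$ becomes a window of $t$ cyclically consecutive entries of column $1$, read in reverse. Hence the sum equals one already bounded above, and the same conclusion holds. The main obstacle I anticipate is purely bookkeeping: handling the wrap-around index ranges and verifying that the row-to-column translation lands in a legitimate (possibly wrapping) window of column $1$. Both are routine once the integrality of $\alpha a_{1}$ is invoked.
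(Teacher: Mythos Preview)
Your argument is correct and follows essentially the same route as the paper: telescope the first column, invoke the circulant structure to reduce every row or column window to a cyclic window in column $1$, and then use the elementary fact $\lfloor A+B\rfloor-\lfloor B\rfloor\in\{\lfloor A\rfloor,\lfloor A\rfloor+1\}$. Your treatment is a bit more explicit than the paper's about the wrap-around and the row-to-column translation (the paper compresses all of that into the single phrase ``from the circulant property and the constant column sum property it suffices to prove this for column~$1$''), but the underlying ideas are identical.
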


\begin{proof} For (a), it suffices to prove the claim for column $1$ of $R$, since any other column of $R$ is just a circular 
shift of column $1$.  This column sum is $\sum_{i=1}^{a_1} (\lfloor (\frac {2^{e_1} - a_{1}}{a_1})i \rfloor -  \lfloor (\frac {2^{e_1} - a_{1}}{a_1})(i-1) \rfloor)$, 
which telescopes to $2^{e_1} - a_1$.

Consider (b).  From the circulant property of $R$ and the constant column sum property from part (a), 
it suffices to prove this claim for any sum of $t$ consecutive entries (mod $a_{1}$)  in column $1$, say a telescoping sum of the form
$$\sum_{i=r}^{r+t-1}R_{i1} =  \lfloor (\frac {2^{e_1} - a_{1}}{a_1})(r+t-1) \rfloor -  \lfloor (\frac {2^{e_1} - a_{1}}{a_1})(r-1) \rfloor.$$
Now letting $A = (\frac {2^{e_1} - a_{1}}{a_1})t$ and $B = (\frac {2^{e_1} - a_{1}}{a_1})(r-1)$, 
we see that this sum is $\lfloor A+B \rfloor - \lfloor B \rfloor.$  But any such difference is either $\lfloor A \rfloor$ or $\lfloor A \rfloor+1 $, 
so our sum is $S_t$ or $S_{t} +1$ as claimed.      \end{proof}

The basic idea in constructing $f$ is to fill out $Y_2^{(m)}$ with images  under 
$f$ column by column of $Y_2^{(m)}$ in increasing column index.  For any fixed column of $Y_2^{(m)}$, 
say the $j$'th column $Y_{2}^{j}$, 
we proceed down that column mapping points of $G(a_{1})$ in nondecreasing chain index.  The set $f(C_i)\cap Y_{2}^{j}$ 
will consist of either $1$ point or $2$ successive points of $Y_{2}^{j}$, depending on whether $R_{ij} = 0$ or $1$ 
respectively.    

The construction of $f$ follows.  While reading the construction below, the reader may wish to consult Figure \ref{layout of G(5)} illustrating the matrix $R$ and 
corresponding map $f: G(5)\rightarrow Y_{2}$.  
For $1\le r\le a_{1}$, we let $C^{(r)} = \bigcup_{i=1}^{r}C_{i}$.

\begin{center} \textbf{Construction of the map $f: G(a_{1})\rightarrow Y_{2}$}
\end{center}

\noindent{\textbf{1.}}The images of $f$ in $Y_{2}^{1}$ are given as follows.

\smallskip

\noindent{\textbf{(1a)}} Initial points of $C_{1}$ are mapped to $Y_{2}^{1}$ as follows.

\noindent If $R_{11} = 0 $, then $f(1,1) = (1,1)$ and $c_{1} \leftarrow 1$.  

\noindent If $R_{11} = 1$, then $f(1,1) = (1,1)$, $f(1,2) = (2,1)$, and $c_1 \leftarrow 2$.   

\smallskip

\noindent{\textbf{(1b)}} For $i=1$ to $a_{1} -1$

\noindent \textit{begin}

\noindent Assume inductively that  $f(C^{(i)})\cap Y_{2}^{1} = \{(x,1): 1\le x\le c_{i} \}$ for an integer $c_i$.  
Then initial points of $C_{i+1}$ are mapped to $Y_{2}^{1}$ as follows.

\noindent If $R_{11} = 0$, then $f(i+1,1) = (c_{i}+1,1)$, and $c_{i+1}\leftarrow c_{i} +1$. 
  
\noindent If $R_{11} = 1$, then $f(i+1,1) = (c_{i}+1,1)$, $f_2(i+1,2) = (c_{i}+2,1)$, and $c_{i+1}\leftarrow c_{i} +2$.   

\noindent If $i = a_{1} - 1$, then go to step 2.

\noindent \textit{end}   

\smallskip

\noindent{\textbf{2.}} For all $1\le t\le N$ (with $N$ sufficiently large), the images of $f$ in $Y_{2}^{t}$ are given inductively as follows.

\smallskip

\noindent For $j=1$ to $N$   

\noindent \textit{begin}

Suppose inductively that every point of $Y_{2}^{(j)}$ has been defined as an image under $f$.  
Suppose also that for each $1 \le i\le a_1$, 
we have $C_i\cap f^{-1}(Y_{2}^{(j)}) = \{(i,t): 1\le t \le n_{ij}\}$ for suitable integers $n_{ij}$, with $n_{0j} = n_{i0} = 0$.
\newline [Comment : So the first $n_{ij}$ points of $C_{i}$ have been mapped to $Y_{2}^{(j)}$.]
\smallskip

\noindent Now define $f(G(a_{1}))\cap Y_{2}^{j+1}$ as follows.

\noindent For $r=0$ to $a_{1} -1$

\noindent \textit{begin}

Suppose inductively that $f(C^{(r)})\cap Y_{2}^{j+1}$ is an initial segment of $Y_{2}^{j+1}$, say given by
\newline $f(C^{(r)})\cap Y_{2}^{j+1} = \{ (x,j+1): 1\le x \le c_{r,j+1} \}$, where $c_{0,j} = 0$ for all $j\geq 1$.
 
 \smallskip
 
 \noindent Define $f(C_{r+1})\cap Y_{2}^{j+1}$ as follows.

\smallskip

\noindent{\textbf{(2a)}} If $R_{r+1,j+1} = 0 $, then $f(r+1,1+n_{r+1,j}) = (c_{r,j+1}+1,j+1)$, and $c_{r+1,j+1}\leftarrow c_{r,j+1}+1$. 

\smallskip 

\noindent{\textbf{(2b)}} If $R_{r+1,j+1} = 1 $, and $j+1$ is even then 
$f(r+1,1+n_{r+1,j}) = (c_{r,j+1}+2,j+1)$, $f(r+1,2+n_{r+1,j}) = (c_{r,j+1}+1,j+1)$, and
$c_{r+1,j+1}\leftarrow c_{r,j+1}+2$. 

\smallskip

\noindent{\textbf{(2c)}} If $R_{r+1,j+1} = 1 $ and $j+1$ is odd, then $f(r+1,1+n_{r+1,j}) = (c_{r,j+1}+1,j+1)$, $f(r+1,2+n_{r+1,j}) = (c_{r,j+1}+2,j+1)$, and
$c_{r+1,j+1}\leftarrow c_{r,j+1}+2$.

\noindent \textit{end}

\noindent \textit{end}

\smallskip

Toward analyzing this construction, recall that $m = \lceil \frac {|G|}{2^{e_{1}}} \rceil$.  With the goal of showing (in the theorem which 
follows) that $f(G)\subseteq Y_{2}^{(m)}$, we analyze $ f^{-1}(Y_{2}^{(m)})$.  Let $C_{i}(t) = \{ (i,y): 1\le y\le t \}$ 
be the set of the first $t$ points of chain $C_{i}$.  Now by steps 2a and 2b, $f(C_{i})$ contributes either one point 
or two successive points to any column $Y_{2}^{j}$, depending on whether $R_{ij} = 0$ or $1$ respectively.  So $f(C_{i})$  
contributes exactly $j + \sum_{t=1}^{j}R_{it}$ points to $Y_{2}^{(j)}$.  Thus letting $N_{ij} = j + \sum_{t=1}^{j}R_{it}$, 
we have $ f^{-1}(Y_{2}^{(m)})\cap C_{i} = C_{i}(N_{im})$.  So let $G(a_{1},N_{im})$ be the subgraph of $G(a_{1})$ induced by $\bigcup_{i=1}^{a_{1}} C_{i}(N_{im})$.  
The next theorem gives various properties 
of $f$, including $Y_{2}^{(m-1)}\subset f(G)\subseteq f(G(a_{1},N_{im})) = Y_{2}^{(m)}$.

Finally, we define $f_{2}: G\rightarrow Y_{2}^{(m)}$ as the restriction of $f$ to the subgraph $G$ of $G(a_{1})$.  In Figure \ref{3by7} we illustrate 
part of $f_{2}(G)$ for $G = [3\times 7\times 4\times a_{4}]$ for some $a_{4} > 1$.  Each $2$-page $D_{2}^{i}$ 
of $G$ is isomorphic to $[3\times 7]$, and the images $f_{2}(D_{2}^{i})$, $1\le i\le 4$, are shown in detail with dividers separating 
the images $f_{2}(D_{2}^{i})$ and $f_{2}(D_{2}^{i+1})$ of successive $2$-pages .  Near these dividers, for each chain and each $2$-page 
 we have placed a box around the image of the chain's first point in that $2$-page.  
For example, under the letter $C$ are three boxed points, representing the images of the first 
points of each of the three chains in the third $2$-page $D_{2}^{3}$.  We have also labeled three points in the figure by their preimages in $G$.  For 
example $(2,4,4,...)$ indicates the preimage $(x_{1}, x_{2}, x_{3}, \cdots , x_{k})\in D_{2}^{4}\subset G$ with $x_{1} = 2$, $x_{2} = 4$, $x_{3} = 4$, and so on.   
There are four $2$-pages of $G$ in each $3$-page of $G$, and Figure \ref{3by7} includes 
the image $f_{2}(D_{3}^{1})$ of the first $3$-page, consisting of $\bigcup_{i=1}^{4}f_{2}(D_{2}^{i})$.  The beginnings of $f_{2}(D_{2}^{5})$, this being the first $2$-page 
of the $3$-page $D_{3}^{2}$, and of 
$f_{2}(D_{3}^{2}) = \bigcup_{i=5}^{8}f_{2}(D_{2}^{i})$ are also illustrated at the far right in the same figure.

\begin{theorem}\label{two dim properties}
The map $f: G(a_1)\rightarrow Y_2$ constructed above has the following properties.  Let $m = \lceil \frac {|G|}{2^{e_{1}}} \rceil$,
$S_{t} = \lfloor (\frac {2^{e_1} - a_{1}}{a_1})t \rfloor$
(as in Lemma \ref{matrix R properties}b), $N_{ij} = j + \sum_{t=1}^{j}R_{it}$, and let 
$G(a_{1},N_{im})$ be the subgraph of $G(a_{1})$ induced by $\bigcup_{i=1}^{a_{1}} C_{i}(N_{im})$.  

\smallskip

\noindent{\rm{(a)}} For each $i$ and $j$, $|f(C_{i})\cap Y_{2}^{j}| = 1$ or $2$, depending on whether $R_{ij} = 0$ or $1$ respectively. Further, 
if $R_{ij} = 1$, then $f(C_{i})\cap Y_{2}^{j}$ consists of two successive points of $Y_{2}^{j}$.  Also, $f$ is monotone in the sense 
that $f(i,j)_2 \le f(i,j+1)_2 \le f(i,j)_2 + 1$ for each $i$ and $j$.

\smallskip

\noindent{\rm{(b)}} Let $L_{r}(j) = f(C^{(r)})\cap Y_{2}^{j}$.  Then 
$L_{r}(j)$ is an initial segment, say $\{(d,j): 1\le d \le |L_{r}(j)|\}$, of $Y_{2}^{j}$, with $|L_{r}(j)| = r + \sum_{i=1}^{r} R_{ij}$. 

\smallskip

\noindent{\rm{(c)}} For $i,h,j\geq 1$ with $h\le j$, let $\pi(i,h\rightarrow j)$ be the number of points of $C_i$ mapped to 
columns $h$ through $j$ of $Y_{2}^{(m)}$.  Then $ \pi(i,1,j) = N_{ij}$.  
Further, for any $r,s\geq 1$ we have 
$\pi(i,r\rightarrow r+j) = j+1+S_{j+1}$ or $j+2+S_{j+1}$, and $|\pi(i,r\rightarrow r+j) -\pi(i,s\rightarrow s+j)|\le 1$.

\smallskip

\noindent{\rm{(d)}} For any $1\le r_{1} < r_{2}\le a_{1}$ we have $|N_{r_{1},j} - N_{r_{2},j}| \le 1$.  Also with $L_{r}(j)$ 
as in (b), for any $1\le j_{1} < j_{2}\le m$ we have $| |L_{r}(j_1)| -  |L_{r}(j_2)| |\le 1$, and $| |L_{r+1}(j_1)| -  |L_{r}(j_2)| |\le 2$.   

\smallskip

\noindent{\rm{(e)}} $f(G(a_{1},N_{im})) = Y_{2}^{(m)}$.

\smallskip

\noindent{\rm{(f)}} $G\subseteq G(a_{1},N_{im})$, and $Y_{2}^{(m-1)}\subset f(G)$.

\smallskip

\noindent{\rm{(g)}} For any $i$, $j$, and $r$ we have $|f(i,r)_{2} - f(j,r)_{2}| \le 1$.

\smallskip

\noindent{\rm{(h)}} For any $i$, $j$, and $r$ we have $|f(i,j)_{1} - f(i,r)_{1}| \le 2$.

\smallskip

\noindent{\rm{(i)}} Suppose that $|f(C_{r})\cap Y_{2}^{j}| = 2$.  Then 
$|L_{r}(j)| \geq |L_{r}(j+1)|$ and $N_{r,j}\geq N_{r+1,j}$.

\end{theorem}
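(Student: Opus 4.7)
The plan is to handle parts (a)--(i) in an order dictated by logical dependence: first (a), (b), (e) straight from the construction; then (c), (d), (g), (i) as consequences of Lemma~\ref{matrix R properties}(b) applied to the right row or column of $R$; then (f) by a short arithmetic estimate; and finally (h), which I expect to be the principal obstacle.

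For (a) and (b), I would simply unwind the definition. Step~2 puts either one new image (when $R_{r+1,j+1}=0$) or two consecutive new images (when $R_{r+1,j+1}=1$) at the bottom of the current initial segment in $Y_2^{j+1}$; an induction on $r$ then proves simultaneously that $f(C^{(r)})\cap Y_2^{j+1}$ is an initial segment of $Y_2^{j+1}$ of length $r+\sum_{i=1}^{r}R_{i,j+1}$, which is precisely (a) and (b). The monotonicity in (a) is immediate, since the $(j+1)$-st point of $C_i$ either shares a column with $f(i,j)$ (as the second member of a pair) or opens the next column. Part (e) then follows from Lemma~\ref{matrix R properties}(a): each column receives $a_1+(2^{e_1}-a_1)=2^{e_1}$ images, its full capacity, so $f(H)=Y_2^{(m)}$.

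For (c), (d), (g), (i) the common trick is the circulant identity $R_{rt}=R_{r-t+1,1}$ (row indices mod $a_1$), which rewrites a consecutive sum in any row of $R$ as a cyclic consecutive sum in column~1. For (c), $\pi(i,r\to r+j)=(j+1)+\sum_{t=r}^{r+j}R_{it}$ is a sum of $j+1$ consecutive row entries, which by Lemma~\ref{matrix R properties}(b) is $S_{j+1}$ or $S_{j+1}+1$. For (d), the same lemma bounds $|N_{r_1,j}-N_{r_2,j}|$ (a comparison of sums of $j$ consecutive column-$1$ entries) and $||I_r(j_1)|-|I_r(j_2)||$ (two sums of $r$ consecutive column entries) by $1$. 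Part (g) follows from (d) together with the strict monotonicity $N_{i,j+1}\ge N_{ij}+1$, via $f(i,r)_2=\min\{j:N_{ij}\ge r\}$. For (i) I would telescope: $|I_r(j)|-|I_r(j+1)|=R_{rj}-R_{a_1,j}$ using $R_{i,j+1}=R_{i-1,j}$, and $N_{rj}-N_{r+1,j}=R_{rj}-R_{r+1,1}$ using $R_{rt}=R_{r-t+1,1}$; both differences are nonnegative once $R_{rj}=1$ is assumed.

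Part (f) needs a short estimate: writing $S_m=\lfloor 2^{e_1}m/a_1\rfloor-m$, the bound $m\ge a_1 W_k/2^{e_1}$ gives $N_{im}\ge m+S_m=\lfloor 2^{e_1}m/a_1\rfloor\ge W_k$, so $G\subset H$; conversely $m-1<a_1 W_k/2^{e_1}$ yields $\lfloor 2^{e_1}(m-1)/a_1\rfloor\le W_k-1$ and hence $N_{i,m-1}\le 1+\lfloor 2^{e_1}(m-1)/a_1\rfloor\le W_k$, so every preimage of $Y_2^{(m-1)}$ already lies in $G$. The main obstacle is (h). Here I would fix a chain $C_i$ and study the set of row indices of $f(C_i)$ across all columns it visits. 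In any single column $y$ the image of $C_i$ occupies row $|I_{i-1}(y)|+1$ and, if $R_{iy}=1$, also row $|I_{i-1}(y)|+2$; by Lemma~\ref{matrix R properties}(b) applied to the first $i-1$ and first $i$ entries of column $y$, each of $|I_{i-1}(y)|$ and $|I_i(y)|$ takes at most two consecutive integer values as $y$ varies. Combined with $S_i-S_{i-1}\in\{0,1\}$ (which holds because $e_1=\lceil\log_2 a_1\rceil$ forces $2^{e_1}/a_1\in[1,2)$), this confines the row indices of all images of $C_i$ to an interval of length at most~$2$, proving $|f(i,j)_1-f(i,r)_1|\le 2$.
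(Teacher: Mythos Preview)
Your proposal is correct and follows essentially the same route as the paper's proof: parts (a), (b), (e) are read off the construction; parts (c), (d), (i) come from Lemma~\ref{matrix R properties}(b) via the circulant identity $R_{rt}=R_{r-t+1,1}$; and part (h) is obtained by pinning the row index of $f(i,y)$ to the three-element set $\{i+S_{i-1},\,i+1+S_{i-1},\,i+2+S_{i-1}\}$ independent of $y$. Your treatments of (f) (a direct arithmetic estimate on $N_{im}$ and $N_{i,m-1}$ rather than the paper's contradiction argument) and of (g) (via $f(i,r)_2=\min\{j:N_{ij}\ge r\}$ combined with (d), rather than the paper's induction on $r$) are minor stylistic variants that arrive at the same conclusions by equivalent one-line arguments.
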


We omit the involved but straightforward proof of this theorem here, and give it in Appendix 1.  The properties listed above 
 can be easily verified in the examples illustrated in Figures \ref{layout of G(5)} and \ref{3by7}.

The following corollary will be used later in proving our dilation bound and the containment $f_{2}(G)\subseteq Opt(G)$.  Its proof also appears 
in Appendix 1.

 \begin{corollary} \label{2-dim dilation}
 Let $v$ and $w$ be adjacent points of $G$.  The map $f_2: G\rightarrow Y_2^{(m)}$ has the following properties.
 
 \medskip
 
 \noindent{\rm{(a)}} $|f_{2}(v)_{1} - f_{2}(w)_{1}|\le 3$, and $|f_{2}(v)_{2} - f_{2}(w)_{2}|\le 1$.  
 
 \medskip
 
 \noindent{\rm{(b)}} $|f_{2}(v)_{1} - f_{2}(w)_{1}| + |f_{2}(v)_{2} - f_{2}(w)_{2}|\le 3$.
 
 \medskip
   
\noindent{\rm{(c)}} $f_{2}(G)\subseteq Opt(G)$.

\medskip

\noindent{\rm{(d)}} Let $T$ and $T'$ be segments of $p$ consecutive points on chains $C_i$ and $C_j$ respectively, 
$1\le i,j\le a_{1}$, where possibly $i = j$.  Let $c$ and $c'$ be the number of columns of $Y_2$ spanned by 
$f_{2}(T)$ and $f_{2}(T')$ respectively.  Then $|c - c'|\le 1$. 

\medskip

\noindent{\rm(e)} For $1\le r\le P_{2}$, let $r' = min\{ c: f_{2}(D_{2}^{(r)})\subseteq Y_{2}^{(c)} \}$.  Then $Y_{2}^{(r'-1)}\subset f_{2}(D_{2}^{(r)})$ and 
$|Y_{2}^{(r')} - f_{2}(D_{2}^{(r)})| < 2^{e_{1}}$.

 \end{corollary}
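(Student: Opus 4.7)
The plan is to derive each part directly from Theorem~\ref{two dim properties}, supplemented by a short case analysis on edges of $G$ and an inversion of the counting identities for $f$.

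For parts (a) and (b) I would classify edges of $G$ by the coordinate that changes. Under the identification $\kappa$, coordinate-$1$ edges land as $\kappa(v) = (i,y)$ and $\kappa(w) = (i+1,y)$ (adjacent chains, same $y$-column), while coordinate-$2$ edges land as $\kappa(v) = (i,y)$ and $\kappa(w) = (i,y+1)$ (same chain, adjacent $y$-columns); these are the edges of $G$ that survive as edges of $G(a_1)$. For coordinate-$1$ edges, Theorem~\ref{two dim properties}(g) gives $|f_2(v)_2 - f_2(w)_2| \le 1$ at once, and a short inspection of how $C_i$ and $C_{i+1}$ are stacked inside a single $Y_2$-column (each contributing $1$ or $2$ consecutive rows, governed by $R$) gives $|f_2(v)_1 - f_2(w)_1| \le 3$. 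For coordinate-$2$ edges, Theorem~\ref{two dim properties}(a) (monotonicity) gives $|f_2(v)_2 - f_2(w)_2| \le 1$ and (h) gives $|f_2(v)_1 - f_2(w)_1| \le 2$. To upgrade to the sum bound in (b), I would refine the coordinate-$1$ case analysis to rule out the simultaneous attainment of row distance $3$ and column distance $1$, leveraging the parity handling of steps (2b1)/(2b2) and property (i) of Theorem~\ref{two dim properties}.

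Part (c) follows immediately from Theorem~\ref{two dim properties}(f) and (e): $G \subseteq H$ under the $\kappa$-identification, so $f_2(G) \subseteq f(H) = Y_2^{(m)} = Opt(G)$. For part (d), I would invert Theorem~\ref{two dim properties}(c): since any $j+1$ consecutive $Y_2$-columns contain $j+1+S_{j+1}$ or $j+2+S_{j+1}$ points of any fixed chain, the number of $Y_2$-columns $c$ spanned by $p$ consecutive chain points depends on $p$ only up to $\pm 1$ and is independent of the chain index, giving $|c-c'| \le 1$. For part (e), under $\kappa$ the set $D_2^{(r)}$ is the rectangle $\{(t,y): 1 \le t \le a_1,\ 1 \le y \le ra_2\}$ in $G(a_1)$; since the construction fills $Y_2$-columns in order and completely (Theorem~\ref{two dim properties}(e)), and since by part (d) the chains' column spans differ by at most $1$, the image $f_2(D_2^{(r)})$ fills $Y_2^{(r'-1)}$ entirely, while the count $|Y_2^{(r')} - f_2(D_2^{(r)})| = 2^{e_1}r' - ra_1a_2 < 2^{e_1}$ follows by the minimality of $r'$.

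The main obstacle is the refined case analysis in (b). The naive triangle inequality from (a) alone only yields a sum bound of $4$, so one must preclude the joint worst case where the row distance is $3$ and the column distance is $1$. I expect the key leverage to come from the parity swap in (2b1)/(2b2) together with property (i) of Theorem~\ref{two dim properties}: whenever the row distance attains $3$ across two adjacent chains within a $Y_2$-column, both endpoints of the corresponding coordinate-$1$ edge of $G$ are forced into the same $Y_2$-column, pinning the column distance to $0$.
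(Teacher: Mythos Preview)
Your plan is correct and tracks the paper's own proof closely: the same edge-type split for (a), the same strategy for (b) (the paper phrases it as showing $d_1 \le 2$ whenever $d_2 = 1$, and carries out precisely the parity-and-(i) case analysis you anticipate, which does run to about a page of subcases on the states of $(f(v),f(w))$ and the sizes $|f(C_i)\cap Y_2^c|$, $|f(C_{i+1})\cap Y_2^{c'}|$), and the same reductions to Theorem~\ref{two dim properties} for (c)--(e). One small slip: in (c) you write $Y_2^{(m)} = Opt(G)$, but $Y_2^{(m)}$ is only a subgraph of $Opt(G)$; the paper makes this containment explicit via $Y_2^{(m)} \subseteq Y_2^{(2^{e_k - e_1})} \subseteq Opt(G)$.
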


 \section{The idea of the general construction, with examples} \label{idea and example}
 
 In this section we give the idea behind our general construction, saving complete details and proofs of validity 
 for later sections.  We continue with the notation of Section \ref{notation}.
 
 The overall plan is to construct a sequence of maps $f_{i}: G\rightarrow [\langle Y_{i-1} \rangle \times P(u_{i})] = Y_{i}^{(u_{i})}$, $u_{i}= \lceil \frac{|G|}{|\langle Y_{i-1} \rangle|} \rceil = \lceil \frac{|G|}{2^{e_{i-1}}} \rceil$, $2\le i\le k$, 
 the first of which is $f_{2}$ from section \ref{the 2D mapping}.  Since $u_{i}\le 2^{\lceil log_{2}(|G|) \rceil - e_{i-1}}$, we will have 
$f_{i}(G)\subseteq Y_{i}^{(u_{i})}\subseteq \langle Y_{i-1} \rangle \times P(2^{\lceil log_{2}(|G|) \rceil - e_{i-1}})$.  The last graph is a spanning subgraph of $Opt(G)$, 
so $f_{i}(G)\subseteq Opt(G)$ for $2\le i\le k$. In particular, for $i=k$ 
we get  $f_{k}(G)\subseteq Opt'(G)$.  The maps $f_{i}$ will be successive approximations to $f_{k}$ in that for any $x\in V(G)$ 
and $2\le i <k$ we will have $f_{i}(x)_{1\rightarrow i-1} = f_{k}(x)_{1\rightarrow i-1}$.   
   
 The final map $f_{k}$ gives the basic geometry of our construction. We then apply a labeling $L$ of the points of $Opt'(G)$ with hypercube addresses from $Opt(G)$ 
to obtain the final embedding $H^{k}: G\rightarrow Opt(G)$, where $H^{k} = L\circ f_{k}$.  We construct the maps $f_{i}$ inductively, letting 
 $f_{i+1}$ be the composition $f_{i+1} = \sigma_{i}\circ I_{i}\circ f_{i}$, using maps $I_{i}$ and $\sigma_{i}$ described below.  Recall that $P_{i} = a_{i+1}a_{i+2}\cdots a_{k}$ is 
 the number of $i$-pages in $G$.

 So suppose $f_{i}: G\rightarrow Y_{i}^{(u_{i})}$ has been constructed, and we outline the construction of $f_{i+1}: G\rightarrow Y_{i+1}^{(u_{i+1})}$. 
 Now $I_{i}:Y_{i}^{(u_{i})}\rightarrow S_{i}^{(P_{i})}\subset Y_{i}$ is a one to one ``inflation" map which spreads out the image $f_{i}(G)$ ``evenly" in $S_{i}^{(P_{i})}$
 by successively ``skipping over" certain carefully chosen $(i-1)$-levels of $Y_{i}$ that are designated ``blank".  See Figure \ref{1thru12} for 
 an example where $i=2$, and where $f_{2}$ is the map of Figure \ref{3by7} and blank $1$-levels (columns) are shaded. 

 We let $S_{i}^{(P_{i})}(G)$ be the set of  
 points in $S_{i}^{(P_{i})}$ lying in nonblank $(i-1)$-levels (i.e. levels not designated ``blank") of $S_{i}^{(P_{i})}$.  We stipulate that  
 $(I_{i}\circ f_{i})(G)\subset S_{i}^{(P_{i})}(G)$.  The number of successive $i$-sections $S_{i}^{r}$ in the range of $I_{i}$ is the same as 
 the number of $i$-pages $D_{i}^{r}$ in $G$, each equalling $P_{i}$, and for each $r$ we associate $S_{i}^{r}$ with $D_{i}^{r}$ in a sense to be made clear below.

Define $I_{i}: Y_{i}^{(u_{i})}\rightarrow S_{i}^{(P_{i})}$ as follows, for now assuming that certain $(i-1)$-levels of $Y_{i}$ (all lying within $S_{i}^{(P_{i})}$ ) have been designated blank.  
 For $x = (x_{1}, x_{2},\ldots ,x_{i})\in Y_{i}^{(u_{i})}$, 
 we let $ I_{i}(x) =  (x_{1}, x_{2},\ldots , x_{i-1} ,x_{i}')$, where $x_{i}'$ is the common $i$-coordinate in the $x_{i}$'th nonblank $(i-1)$-level 
 of $Y_{i}$ (in order of increasing $i$-coordinate).  Now for any $S\subseteq Y_{i}^{(u_{i})}$, let $I_{i}(S) = \{I_{i}(s): s\in S \}$.  Observe that by its 
 definition, $I_{i}$ preserves $(i-1)$-levels; that is $I_{i}(Y_{i}^{t}) = Y_{i}^{t'}$ is an $(i-1)$-level of $Y_{i}$.  Also for $1\le s < t\le u_{i}$, where 
$I_{i}(Y_{i}^{s}) = Y_{i}^{s'}$ and $I_{i}(Y_{i}^{t}) = Y_{i}^{t'}$, we have $s' < t'$; that is, $I_{i}$ preserves the order (by increasing $i$-coordinate) of $(i-1)$-levels.  We can picture 
$I_{i}$ as an order preserving spreading out of the $u_{i}$ many $(i-1)$-levels of $Y_{i}^{(u_{i})}$ containing $f_{i}(G)$ among the $P_{i}2^{e_{i}-e_{i-1}}$ many 
$(i-1)$-levels of $S_{i}^{(P_{i})}$.  The image $I_{i}(Y_{i}^{(u_{i})})$ becomes the set of 
nonblank $(i-1)$-levels ($u_{i}$ of them) in $S_{i}^{(P_{i})}$.  The remaining $P_{i}2^{e_{i}-e_{i-1}} - u_{i}$ many $(i-1)$-levels of $S_{i}^{(P_{i})}$ are blank, 
and distributed among the nonblank $(i-1)$-levels so that certain balance properties outlined below are satisfied.  

Consider the example $G' = [3\times 7\times 4\times 3]$.  Start with $f_{2}: G'\rightarrow Y_{2}^{(u_{2}(G'))}$ given in the previous section, where $u_{2}(G') = \lceil \frac {252}{4} \rceil = 63$.  
Figure \ref{3by7} gives the initial part of $f_{2}(G')$, while Figures \ref{1thru12}a) and b), illustrate the map $I_{2}: Y_{2}^{(u_{2}(G'))}\rightarrow S_{2}^{(P_{2})}$, $P_{2} = 12$.  Each $2$-section $S_{2}^{j}$, $1\le j\le 12$, 
satisfies $S_{2}^{j}\cong \langle Y_{1} \rangle \times P(2^{e_{2}-e_{1}}) = P(4)\times P(8)$.  So $S_{2}^{(P_{2})}$
has $P_{2}2^{e_{2}-e_{1}} = 12\cdot 8 = 96$ columns (i.e. $(i-1$)-levels where $i=2$), of which the $63$ nonblank ones comprising $I_{2}(Y_{2}^{(63)})$  
contain $(I_{2}\circ f_{2})(G')$, while the remaining $33$ 
blank ones (shaded) contain no points of $(I_{2}\circ f_{2})(G')$.  Note in the figure how $I_{2}$ preserves the order in $S_{2}^{(12)}$   
of the $63$ columns in the domain $Y_{2}^{(63)}$ of $I_{2}$, and the $33$ blank 
columns (shaded) of  $S_{2}^{(12)}$ are distributed throughout $S_{2}^{(12)}$.

 Turning to arbitrary $G$, the quantity and distribution of blank $(i-1)$-levels within $S_{i}^{(P_{i})}$ will be such that for each $1\le r\le P_{i}$, the subgraph $S_{i}^{(r)}$ of $Y_{i}$ has 
 barely enough nonblank $(i-1)$-levels to host $(I_{i}\circ f_{i})(D_{i}^{(r)})$.  In effect, we want $S_{i}^{(r)}$ to have at least as much nonblank volume as $|D_{i}^{(r)}|$ for 
 each $r\geq 1$, but barely so in increments the size of an $(i-1)$-level in $Y_{i}$.  We formulate 
 this condition precisely as follows. Let $s_{i}(j)$ be the number of blank $(i-1)$-levels in section 
 $S_{i}^{j}$.  Observing that $|D_{i}^{j}| = a_{1}a_{2}\cdots a_{i}$ for any $j$, 
 that each $(i-1)$-level of $Y_{i}$ has $2^{e_{i-1}}$ points, and that each section $S_{i}^{j}$ 
 consists of $2^{e_{i}-e_{i-1}}$ many pairwise disjoint $(i-1)$-levels, we require that  
 
 \begin{equation}\label{overv just enough cols general}
\lceil \frac{ra_{1}a_{2}\cdots a_{i}}{2^{e_{i-1}}}  \rceil + \sum_{j=1}^{r} s_{i}(j)  = r2^{e_{i} - e_{i-1}}
\end{equation}
for each $1\le r\le P_{i}$.
  
 Next, the map $\sigma_{i}: (I_{i}\circ f_{i})(G)\rightarrow \langle Y_{i} \rangle \times P(u_{i+1}) = Y_{i+1}^{(u_{i+1})} $ ``stacks" 
 the sets $S_{i}^{j}\cap (I_{i}\circ f_{i})(G)$, $1\le j\le P_i$,
 over the single section $S_{i}^{1} \cong \langle Y_{i}    \rangle$ as follows.   Let $x=(x_{1}, x_{2},\ldots,x_{i})\in S_{i}^{j}\cap(I_{i}\circ f_{i})(G)$, $1\le j\le P_{i}$. 
Let  $\bar{x_{i}}\equiv x_{i}$ (mod $2^{e_{i} - e_{i-1}}$), $1\le \bar{x_{i}} \le 2^{e_{i} - e_{i-1}}$, be the congruence class of $x_{i}$ mod $2^{e_{i} - e_{i-1}}$.  
 Define the first 
 $i$ coordinates of $\sigma_{i}(x)$ by $\sigma_{i}(x)_{1\rightarrow i} = (x_{1}, x_{2},\ldots, x_{i-1}, \bar{x_{i}})$.  
 Observe that since $x\in S_{i}^{j}$, we have $x_{i} = (j-1)2^{e_{i} - e_{i-1}} + \bar{x_{i}}$, and that $\sigma_{i}(x)_{1\rightarrow i}\in S_{i}^{1}$.   
 To get the $i+1$'st (and last) coordinate, 
 let $c$ be the number of points $y=(y_{1}, y_{2},\ldots,y_{i})\in S_{i}^{t}\cap(I_{i}\circ f_{i})(G)$, $1\le t\le j$, satisfying $\sigma_{i}(y)_{1\rightarrow i} = \sigma_{i}(x)_{1\rightarrow i}$.
 Then define $\sigma_{i}(x) = (x_{1}, x_{2},\ldots,x_{i-1},\bar{x_{i}}, c)$. 
 
 Finally define $f_{i+1}$ as the composition $f_{i+1} = \sigma_{i} \circ I_{i} \circ f_{i}$.    
 
 For a fixed point $(x_{1}, x_{2},\ldots,x_{i-1},\bar{x_{i}})\in S_{i}^{1}$, we 
 view the set of images $(x_{1}, x_{2},\ldots,x_{i-1},\bar{x_{i}}, c)$ 
 under $\sigma_{i}$ as a stack addressed by $(x_{1}, x_{2},\ldots,x_{i-1},\bar{x_{i}})$ 
 extending into the $(i+1)$'st dimension. Thus each point $(x_{1}, x_{2},\ldots,x_{i-1},\bar{x_{i}})$ of $S_{i}^{1}$ 
 becomes the address of such a stack, and the image $\sigma_{i}(x) = (x_{1}, x_{2},\ldots,x_{i-1},\bar{x_{i}}, c)$ is the $c$'th point ``up" in this stack.     
 See Figure \ref{stack4sections}, to which we return later with a full explanation, for an initial look at $\sigma_{2}$.

 Since the domain of $\sigma_{i}$ is $(I_{i}\circ f_{i})(G)$, which is a set contained in the collection of nonblank $(i-1)$-levels of $S_{i}^{(P_{i})}$, it follows that  
 the points in blank $(i-1)$-levels of $S_{i}^{(P_{i})}$ make no contribution under the map $\sigma_{i}$ to the aforementioned stacks.  We can 
 picture the images $\sigma_{i}(x)$, $x\in S_{i}^{j}\cap (I_{i}\circ f_{i})(G)$, as ``falling through" blank $(i-1)$-levels $Y_{i}^{d}\subset S_{i}^{t}, t< j$, with $d\equiv x_{i}$ (mod $2^{e_{i}-e_{i-1}}$).  
 But if such a $Y_{i}^{d}$ is nonblank, and if $z = (x_{1}, x_{2}, \cdots , x_{i-1}, d)\in Y_{i}^{d}\cap (I_{i}\circ f_{i})(G)$, then $\sigma_{i}(z)_{1\rightarrow i} = \sigma_{i}(x)_{1\rightarrow i}$.  Thus 
 $z$ does contribute (under $\sigma_{i}$) to the same stack as $x$; that is, both $\sigma_{i}(z)$ and $\sigma_{i}(x)$ belong to the stack addressed by 
 $(x_{1}, x_{2},\ldots,x_{i-1},\bar{x_{i}})\in S_{i}^{1}$.  Further, $\sigma_{i}(z)_{i+1} < \sigma_{i}(x)_{i+1}$ by definition of $c$ and since $t<j$; that is, $\sigma_{i}(z)$ 
 appears ``below" $\sigma_{i}(x)$ in this stack.  So we see that each section $S_{i}^{j}$, $1\le j < P_{i}$, contributes (via $\sigma_{i}$) either $0$ or $1$ point
to the stack $(x_{1}, x_{2},\ldots,x_{i-1},\bar{x_{i}})\in S_{i}^{1}$.  It contributes $1$ point precisely when its unique $(i-1)$-level $Y_{i}^{d}\subset S_{i}^{j}$ 
satisfying $d\equiv \bar{x_{i}}$ (mod $2^{e_{i}-e_{i-1}}$) is not blank and the point $z = (x_{1}, x_{2}, \cdots , x_{i-1}, d)\in Y_{i}^{d}$ of $S_{i}^{j}$ lies in $(I_{i}\circ f_{i})(G)$.

 As an example, consider $G'' = [3\times 7\times 4]\subset G'$.  We construct $f_{3}(G'') = (\sigma_{2}\circ I_{2}\circ f_{2})(G'')$.  Note that  $e_{1} = 2$, $e_{2} = 5$,  $P_{2} = 4$, 
 $u_{2}(G'') = \lceil \frac{84}{4} \rceil = 21$, 
 and $u_{3}(G'') = \lceil \frac{84}{32} \rceil = 3$.  Again each $2$-section $S_{2}^{j}$, $1\le j\le P_{2} = 4$, is isomorphic to $P(2^{e_{1}}) \times P(2^{e_{2}-e_{1}}) = P(4)\times P(8)$, and 
 $S_{2}^{(P_{2})}$ has $P_{2}\cdot 2^{e_{2}-e_{1}} = 4\cdot 8 = 32$ columns.

 \noindent \underline{Step 1}: Start with $f_{2}: G''\rightarrow Y_{2}^{(u_{2}(G''))} = Y_{2}^{(21)}$ given in section \ref{the 2D mapping} and illustrated as $f_{2}(D_{3}^{1})$ in Figure \ref{3by7}. 
 
 \noindent \underline{Step 2}: Perform the map $I_{2}: Y_{2}^{(21)}\rightarrow S_{2}^{(P_{2})} = S_{2}^{(4)}$ by letting $I_{2}(a,b) = (a,b')$, where 
 $b'$ is the $b$'th nonblank column of $S_{2}^{(P_{2})}$, ordered by increasing second coordinate.  The result is shown in  Figure \ref{1thru12}a), where 
 $I_{2}(f_{2}(G''))$ is unshaded and blank columns are shaded.  The choice of blank columns will be discussed later.
 
 On comparing with Figure \ref{3by7}, we see that $I_{2}(f_{2}(G''))$ is indeed obtained from $f_{2}(G'')$ by skipping over the designated 
blank columns.  Each $2$-section $S_{2}^{i}\subset S_{2}^{(4)}$, $1\le i\le 4$, is the host for the $2$-page 
$D_{2}^{i}$ of $G''$, with the possible exception of the images of first 
points of chains in $D_{2}^{i}$ (these being boxed in Figure \ref{1thru12}) which may appear 
in the last nonblank column of $S_{2}^{i-1}$ (instead of in $S_{2}^{i}$).  
 
 \noindent \underline{Step 3}: Perform the stacking map $\sigma_{2}:(I_{2}\circ f_{2})(G'')\rightarrow Y_{3}^{(u_{3}(G''))} = Y_{3}^{(3)}$.  Thus for each 
 $(a,b)\in S_{2}^{j}\cap (I_{2}\circ f_{2})(G'')$, we let $\sigma_{2}((a,b)) = (a,\overline {b}, c)$, where 
$\overline {b} \equiv y$ (mod $8$), $1\le \overline {b}\le 8$, 
and $c = |\{(a,b')\in S_{2}^{t}\cap (I_{2}\circ f_{2})(G''): t\le j, ~b' \equiv b$ (mod $8)      \}|$.  
 As above, we view $(a,\overline {b})$ as the address in $S_{2}^{1}$ of the stack on which 
$(a,b)$ has been placed by $\sigma_{2}$, and $\sigma_{2}((a,b))$ is the $c$'th point ``up" on this stack. 

In Figure \ref{stack4sections} we illustrate $f_{3}(G'') = \sigma_{2}((I_{2}\circ f_{2})(G''))$; that is, how $\sigma_{2}$ stacks the  
four sets $S_{2}^{j}\cap (I_{2}\circ f_{2})(G'')$, $1\le j\le 4$,  
 over $S_{2}^{1} = \langle Y_{2} \rangle$ to yield $f_{3}(G'')$.  At top center we begin with $I_{2}(Y_{2}^{(21)})\subseteq S_{2}^{(P_{2})} = S_{2}^{(4)}$ where   
the four $2$-sections $S_{2}^{j}$, $1\le j\le 4$ comprising $S_{2}^{(4)}$, are placed vertically in succession for convenience.  We now perform the stacking, with the result shown at lower left in the figure.    
Here we regard the bottom $4\times 8$ layer (of the three layers in the result) as a copy of $S_{2}^{1}$, each of whose $32$ points is 
the address of a stack.  Consider the $4$ points lying in column $2$ of this $S_{2}^{1}$.  As shown in the figure, each of these points is the address of a stack 
of height $3$.  For brevity let us write $\sigma_{2}(S_{2}^{j})$ for $\sigma_{2}(S_{2}^{j}\cap (I_{2}\circ f_{2})(G''))$.  
The contributions to any one of these $4$ stacks 
 come from $\sigma_{2}(S_{2}^{1})$, $\sigma_{2}(S_{2}^{3})$, and $\sigma_{2}(S_{2}^{4})$ (indicated respectively by the labels $1,3,4$ in these stacks).  
To see how this happens, refer back to the top center of this figure and look at the points in column $2$ in the $4$ sections $S_{2}^{j}$, $1\le j\le 4$, 
being stacked.  Since column $2$ of the second of these, $S_{2}^{2}$, is blank, the points in it make no contribution to the stacks whose addresses lie in 
  column $2$ of $S_{2}^{1}$ at lower left.  But the points in column $2$ of $S_{2}^{1}$, $S_{2}^{3}$, and $S_{2}^{4}$ (at top center) lie in 
nonblank columns and are contained in $(I_{2}\circ f_{2})(G'')$.  Hence they do  contribute to the column $2$ stacks of $S_{2}^{1}$ 
at lower left.    
We can view 
such points in column $2$ of $S_{2}^{3}$ and $S_{2}^{4}$  in the top center as ``falling through" blank column $2$ in $S_{2}^{2}$ under the action of $\sigma_{2}$. 
Similarly every point 
in column $1$ of $S_{2}^{1}= \langle Y_{2} \rangle$ at lower left is the address of a stack of height $2$. The contributions to these stacks are from 
$\sigma_{2}(S_{2}^{2})$, $\sigma_{2}(S_{2}^{3})$, indicated by labels $2$ and $3$ in these stacks.  These contributions   
``fall through" blank column $1$ of $S_{2}^{1}$ (at top center) under the action of $\sigma_{2}$.  The lower right of the figure shows how 
individual image points are affected by this stacking.  For example, the images under $\sigma_{2}$ of  paths in  
$S_{2}^{2}\cap (I_{2}\circ f_{2})(G'')$ (in bold at top center) jump between levels of the final result at lower right.    
Note also that the maximum stack height is indeed $u_{3}(G'') = 3$, achieved at stacks addressed by points in columns $2,3,5,6$, and $8$ of $S_{2}^{1}$.  So $f_{3}(G'')\subseteq Y_{3}^{(3)}$.

In the rest of this section we show how to assign blank columns in $S_{2}^{(P_{2})}$ (used in constructing $I_{2}$ and then $f_{3}$), and 
generally how to assign blank $(i-1)$-levels in $S_{i}^{(P_{i})}$ (used in constructing $I_{i}$ and then $f_{i+1}$) for an arbitrary multidimensional grid $G$.  We will see that 
the ability to make this assignment is implied by the existence of a certain class of $(0,1)$ matrices, whose construction we give in the next section.  For now we describe conditions 
which our assignment will satisfy that are sufficient for making our embedding $f_{k}$ (and later $H^{k}$) have the required dilation and containment properties.  To start, consider 
the assignment of blank columns in $S_{2}^{(P_{2})}$.  
\smallskip
\newline \textbf{(a)}  The number $s_{2}(j)$ of blank columns in each $2$-section $S_{2}^{j}$, $1\le j\le P_{2}$, is chosen so that after $(I_{2}\circ f_{2})(G)$ ``skips over'' these columns, 
each subgraph $S_{2}^{(r)}$ (the union of the first $r$ many 2-sections), $1\le r\le P_{2}$, has barely 
enough nonblank columns to host the image $(I_{2}\circ f_{2})(D_{2}^{(r)})$ of the first $r$ many $2$-pages of $G$.  This is expressed in equation (\ref{overv just enough cols general}) in 
the case $i=2$, which says that $\lceil \frac{ra_{1}a_{2}}{2^{e_{1}}}  \rceil + \sum_{j=1}^{r} s_{2}(j) = r2^{e_{2} - e_{1}}$
for each $1\le r\le P_{2}$.  Thus each image $(I_{2}\circ f_{2})(D_{2}^{j})$ lies in 
its own section $S_{2}^{j}$, apart from images of first points of chains in $D_{2}^{j}$ as explained previously and shown in Figure \ref{1thru12}.     
In the example $G'' = [3\times 7\times 4]$, the sequence $s_{2}(j)$ can be found recursively 
from  equation (\ref{overv just enough cols general}) (with $i=2$) to be $s_{2}(1) = 2$, $s_{2}(2) = 3$, $s_{2}(3) = 3$, and $s_{2}(4) = 3$, and these 
numbers of blank columns are shown in the four sections respectively in Figure \ref{1thru12}a).
\smallskip

\noindent \textbf{(b)} The blank columns are distributed over the various $2$-sections $S_{2}^{j}$ so that for $xy\in E(G)$ the contribution to $dist_{Y_{k}}(f_{k}(x),f_{k}(y))$ from $|f_{k}(x)_{2} - f_{k}(y)_{2}|$ is small.  
As background, note that $dist_{Y_{k}}(f_{k}(x),f_{k}(y)) = \sum_{i=1}^{k}|f_{k}(x)_{i} - f_{k}(x)_{i}|$.  It turns out that $f_{k}$ will satisfy 
$|f_{k}(x)_{i} - f_{k}(x)_{i}| = |f_{i+1}(x)_{i} - f_{i+1}(x)_{i}| = |(I_{i}\circ f_{i})(x)_{i} - (I_{i}\circ f_{i})(y)_{i}|$ for each $1\le i\le k-1$, where the last difference is 
taken mod $2^{e_{i}-e_{i-1}}$.  The last equality for $i=2$ 
can be verified in the construction of $f_{3}(G'')$ and figures above. 

Focusing on the case $i=2$, suppose that $x$ and $y$ agree in their first two coordinates.  So $x$ and $y$ are corresponding points in their respective 
$2$-pages, say $x\in D_{2}^{s}$ and $x\in D_{2}^{t}$.  Now we wish to keep  $|(I_{2}\circ f_{2})(x)_{2} - (I_{2}\circ f_{2})(y)_{2}|$ small mod $2^{e_{2}-e_{1}}$.  
Thus we want $(I_{2}\circ f_{2})(x)$ to be skipping over roughly the same number of blank columns in $S_{2}^{s}$ as does $(I_{2}\circ f_{2})(y)$ in $S_{2}^{t}$. 
To accomplish this, 
given that $s$ and $t$ are arbitrary as are $x$ and $y$ as corresponding points, we will require a strong balance, over all $2$-sections $S_{2}^{j}$, in the frequency of blank columns in any initial segment 
of columns in $S_{2}^{j}$.  

A similar balance will be required in the frequency of blank ($i-1$)-levels in any initial segment of ($i-1$)-levels of any $i$-section $S_{i}^{j}$.  A precise 
formulation of this requirement with additional detail will be described below.

\smallskip

\noindent \textbf{(c)} The assignment of blank columns across all $2$-sections $S_{2}^{j}$, $1\le j\le P_{2}$, is uniformly distributed mod $2^{e_{2}-e_{1}}$.  This condition 
allows us to stack the sections $S_{2}^{j}$ on top of each other, over a single $2$-section $S_{2}^{1} = \langle Y_{2} \rangle$, so that the stacks addressed in   
$S_{2}^{1}$ have roughly equal stack heights.  In the example $G'' = [3\times 7\times 4]$  illustrated in Figure \ref{1thru12}a, note 
that the $32$ columns in $2$-sections $S_{2}^{j}$, $1\le j\le 4$, can be partitioned into congruence classes mod $8$ by column number, and each congruence class 
has exactly 1 or 2 of its columns designated blank.  The result is that stack heights differing by at most $1$.

\medskip
The goals described above in (a)-(c) can be formulated as combinatorial conditions to be satisfied by the designation of blank columns, and generally of blank ($i-1$)-levels.  
Starting with the designation of blank columns, it will be convenient to define a  $P_{2}\times 2^{e_{2}-e_{1}}$, $(0,1)$ matrix $F(2) = (f_{cd}(2))$. The rows of $F(2)$ correspond to the 
$2$-sections $S_{2}^{c}$, $1\le c\le P_{2}$, and the columns of $F(2)$ to the columns ($2^{e_{2}-e_{1}}$ of them) within each $2$-section.     
 We let $f_{cd}(2) = 1$ if column $d$ in section $S_{2}^{c}$ (which recall is column $Y_{2}^{(c-1)(2^{e_{2}-e_{1}})+d}$ in $Y_{2}$)
 is blank, and $f_{cd}(2) = 0$ if that column is nonblank.  Since $s_{2}(c)$ is 
the number of blank columns in $S_{2}^{c}$, the sum of entries in row $c$ of $F(2)$ is 
\begin{equation}\label{overv row sums}
\sum_{d=1}^{2^{e_{2}-e_{1}}}f_{cd}(2) = s_{2}(c).
\end{equation}
for $1\le c\le P_{2}$.

Toward formulating the goal expressed in (c), consider now the contribution, through the stacking map $\sigma_{2}$, from $S_{2}^{(r)}$ to 
any stack addressed by a point in $S_{2}^{1}$.  For any stack address $(x,y)\in S_{2}^{1}$, let 
$Stack_{3}((x,y),r)) = \{ \sigma_{2}(z):  \sigma_{2}(z)_{1\rightarrow 2} = (x,y), z\in S_{2}^{(r)}  \}$, which is the set 
of points in the stack addressed by  $(x,y)$ whose preimages under the map $\sigma_{2}$ come from $S_{2}^{(r)}$.  Now for any $\sigma_{2}(z)\in Stack_{3}((x,y),r))$, 
we have $z = (x,d)$, with $d\equiv y$ (mod $2^{e_{2}-e_{1}})$, and column $Y_{2}^{d}$ is nonblank.  Then for $r < P_{2}$ 
we see that the stack height $|Stack_{3}((x,y),r))|$ is the 
number of zeros of the matrix $F(2)$ lying in column $y$ and within rows $1$ through $r$ .  So to keep stack heights 
nearly equal over all stack addresses $(x,y)\in S_{2}^{1}$, we require that this number of zeros is nearly the same over all columns 
$y$ in $F(2)$.  For this, it suffices to have the number of $1$'s in rows $1$ through $r$ of any column nearly the same; that is, to have nearly 
equal initial column sums.  This becomes the condition    
\begin{equation}\label{overv initial column sums}
|\sum_{c=1}^{r}f_{cy}(2) - \sum_{c=1}^{r}f_{cy'}(2)| \le 1.
\end{equation}
for any $1\le y,y'\le 2^{e_{2}-e_{1}}$ and $1\le r\le P_{2}$.  It says that the blank columns are uniformly distributed mod $2^{e_{2}-e_{1}}$.

To formulate (b), for integers $a$ and $b$ let $||a-b||$ be the difference $a-b$ taken mod $2^{e_{2}-e_{1}}$.  
Let $xy\in E(G)$, say with $x\in D_{2}^{s}$ and $y\in D_{2}^{t}$, $s\ne t$.  Since $x$ and $y$ agree in their first two coordinates,  
 $x$ and $y$ are each the $p$'th points in $D_{2}^{s}$ and $D_{2}^{t}$ 
of their respective 
chains, for some $1\le p\le a_{2}$.  By equation (\ref{overv just enough cols general}) for $i=2$, $S_{2}^{(r)}$ has 
barely enough nonblank columns to contain $(I_{2}\circ f_{2})(D_{2}^{(r)})$ for any $1\le r\le P_{i}$.  Thus by the monotonicity 
property of $f_{2}$ in Theorem \ref{two dim properties}a, for any $j\geq 1$, the image under $I_{2}\circ f_{2}$ of the first point of 
any chain in $D_{2}^{j}$ must lie either in the last nonblank column of $S_{2}^{j-1}$ or the first nonblank column of $S_{2}^{j}$.  Now let $T$ (resp. ($T'$)) be the set of 
the first $p$ points in $D_{2}^{s}$ (resp. $D_{2}^{t}$) in the chain containing $x$ (resp. $y$).    
Also let 
$c$ (resp. $c'$) be the number of columns of $Y_{2}$ spanned by $f_{2}(T)$ (resp. $f_{2}(T')$), and thus the number 
of nonblank columns of $Y_{2}$ spanned by $(I_{2}\circ f_{2})(T)$ (resp. $(I_{2}\circ f_{2})(T')$).  By Corollary \ref{2-dim dilation}d we have $|c-c'|\le 1$.  
Thus the contribution to $||(I_{2}\circ f_{2})(x)_{2} - (I_{2}\circ f_{2})(y)_{2}||$ from the difference between the number of 
nonblank columns in $S_{2}^{s}$ (resp. $S_{2}^{t}$) preceding $(I_{2}\circ f_{2})(x)$ (resp. $(I_{2}\circ f_{2})(y)$) is small (in fact $\le 1$).  The same contribution due 
to the difference in starting columns of $(I_{2}\circ f_{2})(T)$ and $(I_{2}\circ f_{2})(T')$ is also small ($\le 1$) by the above.  Thus 
 $||(I_{2}\circ f_{2})(x)_{2} - (I_{2}\circ f_{2})(y)_{2}||$ depends primarily on the number                  
 $N_{1}$ (resp. $N_{2}$) of blank columns   
in $S_{2}^{s}$ (resp. $S_{2}^{t}$) preceding the column containing $(I_{2}\circ f_{2})(x)$ (resp. $(I_{2}\circ f_{2})(y)$).  Each column 
counted by $N_{1}$ (resp. $N_{2}$) pushes the image $(I_{2}\circ f_{2})(x)$ (resp. $(I_{2}\circ f_{2})(y)$) one more column to the right in $S_{2}^{s}$ (resp. $S_{2}^{t}$).  
So we want to keep 
$|N_{1}-N_{2}|$ small.    
Since each blank column corresponds to a $1$ in $F(2)$, we see that each of 
$N_{1}$ and $N_{2}$ is just an initial row sum in $F(2)$ (row $s$ for $N_{1}$ and row $t$ for $N_{2}$).  These considerations motivate the goal 
of keeping the difference between corresponding initial row sums in $F(2)$ small.  
It will suffice for our purposes to have 
\begin{equation}\label{overv initial row sums}
 |\sum_{j=1}^{b} f_{sj}(2) - \sum_{j=1}^{b} f_{tj}(2)|\le 2.
 \end{equation}
 for any, $1\le s,t\le a_{3}a_{4}\cdots a_{k}$, $1\le b\le 2^{e_{2}-e_{1}}$.
 
 For fixed $i$, we will see that the sequence $\{ s_{i}(j) \}$, $1\le j\le P_{i}$, recursively defined by (\ref{overv just enough cols general}) satisfies $|s_{i}(j_{1}) - s_{i}(j_{2})|\le 1$ 
 for all $1\le j_{1}, j_{2}\le P_{i}$.  Applying this to the case $i=2$, we can view the satisfying of conditions (a)-(c), as relying on the construction of a 
 $P_{2}\times 2^{e_{2}-e_{1}}$, $(0,1)$ matrix $F(2)$ 
 with prescribed row sums $s_{2}(c)$, $1\le c\le P_{i}$, these sums differing by at most $1$ (from the preceding sentence and (\ref{overv row sums})).  Further, $F(2)$ will have balanced initial column sums and balanced initial row 
 sums (from (\ref{overv initial column sums}) and 
 (\ref{overv initial row sums})).  
 
 Such an $F(2)$ for the embedding $f_{3}: G'' = [3\times 7\times 4] \rightarrow Y_{3}^{(3)}$ discussed above, where $F(2)$ has $P_{2} = 4$ rows, is illustrated by the $(0,1)$ matrix 
in the right of Table 1a).  The fractional matrices at left from which this and the other $(0,1)$ matrices in this table are derived will be explained later.  This $F(2)$ encodes which columns 
 are designated `blank' in performing the inflation step $I_{2}: Y_{2}^{(u_{2}(G''))} \rightarrow S_{2}^{(4)}$.  The stacking map $\sigma_{2}$ 
  is then applied to yield the final embedding $f_{3} = \sigma_{2}\circ I_{2}\circ f_{2}: [3\times 7\times 4] \rightarrow Y_{3}^{(3)}.$

The corresponding requirements for arbitrary dimension $i\geq 2$ are analogous.  We return to the construction of $f_{i+1}$ from $f_{i}$ as the composition 
 $f_{i+1} = \sigma_{i}\circ I_{i}\circ f_{i}$.  Recall that there are $P_{i} = a_{i+1}a_{i+2}\ldots a_{k}$ many $i$-sections $S_{i}^{j}$, $1\le j\le P_{i}$, and each such $i$-section has $2^{e_{i}-e_{i-1}}$ many 
 $(i-1)$-levels.  So let $F(i) = (f_{cd}(i))$ be the $P_{i}\times 2^{e_{i}-e_{i-1}}$, $(0,1)$ matrix (analogous to $F(2)$) defined by $ f_{cd}(i) = 1$ 
 if the $d$'th $(i-1)$-level of $i$-section $S_{i}^{c}$ (that is, $(i-1)$-level $Y_{i}^{(c-1)2^{e_{i}-e_{i-1}} + d}$ of $Y_{i}$ ) is blank, and $ f_{cd}(i) = 0$ otherwise.  So we require the analogues of the 
 relations (\ref{overv row sums} - \ref{overv initial row sums});
 
\begin{equation}\label{overv row sums general}
\sum_{j=1}^{2^{e_{i}-e_{i-1}}}f_{cj}(i) = s_{i}(c),
\end{equation}
for $1\le c\le P_{i}$,

\begin{equation}\label{overv initial column sums general}
|\sum_{c=1}^{r}f_{cy}(i) - \sum_{c=1}^{r}f_{cy'}(i)| \le 1,
\end{equation}
for $1\le y,y'\le 2^{e_{i}-e_{i-1}}$, $1\le r\le P_{i}$, 

\begin{equation}\label{overv initial row sums general}
 |\sum_{d=1}^{r} f_{sd}(i) - \sum_{d=1}^{r} f_{td}(i)|\le 2,
 \end{equation}
 for $1\le s,t\le P_{i}$, $1\le r\le 2^{e_{i}-e_{i-1}}$.

 Relation (\ref{overv row sums general}) says that there are $s_{i}(c)$ many blank $(i-1)$-levels in $S_{i}^{c}$.  Relation 
 (\ref{overv initial column sums general}) gives balanced initial column sums in $F(i)$.  This implies that blank 
 $(i-1)$-levels are uniformly distributed mod $2^{e_{i}-e_{i-1}}$.  This ensures 
 balanced stack heights,  under the map $\sigma_{i}$, for stacks addressed by $S_{i}^{1} \cong \langle Y_{i}   \rangle$.   
  Finally 
 (\ref{overv initial row sums general}) will imply (after some work) that $||(I_{i}\circ f_{i})(x)_{i} - (I_{i}\circ f_{i})(y)_{i}||$ is small for any 
 $xy\in E(G)$, with $x\in D_{i}^{s}$ and $x\in D_{i}^{t}$, $1\le s,t\le P_{i}$.  This will make the contribution to $dist_{Y_{k}}(f_{k}(x),f_{k}(x))$ from 
 $|f_{k}(x)_{i} - f_{k}(y)_{i}|$ small for corresponding points ($x$ and $y$) in distinct $i$-pages.  Applying this requirement for all $i$ will keep $dilation(f_{k})$ small.

 We are thus reduced to the construction of a $(0,1)$ matrix $F(i)$ for each $2\le i\le k-1$, with prescribed row 
 sums $s_{i}(j)$ (with values recursively computed using (\ref{overv just enough cols general})), $1\le j\le P_{i}$, differing by at most $1$, and 
 balanced initial column and row sums as required in (\ref{overv initial column sums 
 general}) and (\ref{overv initial row sums general}).  We construct such matrices in the next section.

Returning to $G' = [3\times 7\times 4\times 3]$ we illustrate 
 the construction of $f_{4}:G'\rightarrow Y_{4}^{(u_{4}(G'))} = Y_{4}^{(2)}$. 
 We have $u_{2}(G') = \lceil  \frac{|G'|}{2^{e_{1}}} \rceil = \lceil  \frac{252}{4} \rceil = 63$, $u_{3}(G') = \lceil  \frac{|G'|}{2^{e_{2}}} \rceil  = \lceil \frac{252}{32} \rceil = 8$,
and $u_{4}(G') = \lceil \frac{|G'|}{2^{e_{3}}} \rceil = \lceil \frac{252}{128} \rceil = 2$.  
 
The maps $f_{2}, f_{3}$, and $f_{4}$ are constructed in succession.  The map $f_{2}:G'\rightarrow Y_{2}^{(63)}$ is given in the previous section.  To build 
 $f_{3} = \sigma_{2}\circ I_{2}\circ f_{2}$ the next step is to define the inflation map $I_{2}$.  For this, note that $P_{2} = 12$, so there 
 will be $12$ many $2$-sections $S_{2}^{j}$, $1\le j\le 12$, each isomorphic to $P(4)\times P(8)$.  Thus $I_{2}$ has the form 
 $I_{2}: Y_{2}^{(63)}\rightarrow S_{2}^{(12)}$.  The sequence $ \{ s_{2}(j) \}$, $1\le j\le 12$,  
is calculated inductively using (\ref{overv just enough cols general}) for $i=2$, with  
 $e_{1} = 2$ and $e_{2} = 5$, to give  $ 2,3,3,3,2,3,3,3,2,3,3,3 $. A balanced distribution of blank columns 
among the sections $S_{2}^{j}$, $1\le j\le 12$ satisfying (\ref{overv just enough cols general}) - (\ref{overv initial row sums}) 
is given by the $12\times 8$, $(0,1)$ matrix $F(2)$ shown at right in Table $1$b).  So $I_{2}$ will make make $f_{2}(G')$ skip 
over the blank columns in $S_{2}^{(12)}$ as in the previous example. 
The blank columns (shaded) among the first 
$4$ sections $S_{2}^{1}$, $S_{2}^{2}$, $S_{2}^{3}$, $S_{2}^{4}$, as encoded by the first $4$ 
rows of the matrix in the right of Table $1$b), are illustrated in Figure \ref{1thru12}a.  The blank columns among the 
remaining $8$ sections $S_{2}^{j}$, $5\le j\le 12$, (encoded by the last $8$ rows of the same matrix) are illustrated in Figure  \ref{1thru12}b), where these $8$ sections are surrounded 
by a box.  We put small boxes 
around images of first points of chains within each $2$-page.

$$
\begin{array}{c}
\\ \\
\left[ \begin{array}{cccc}
\mathbf{1/4} & \mathbf{1/4} & \ldots & \mathbf{1/4}\\
\mathbf{3/8} & \mathbf{3/8} & \ldots & \mathbf{3/8}\\
\mathbf{3/8} & \mathbf{3/8} & \ldots & \mathbf{3/8}\\
\mathbf{3/8} & \mathbf{3/8} & \ldots & \mathbf{3/8}\\
\end{array}\right] \\

\end{array}
\begin{array}{c}
row\\ sum\\
2 \\ 3\\ 3\\  3\\ \end{array}
\begin{array}{c}
\\  \\ \\
Knuth-like  \\ rounding\\ \rightarrow \\  \\ \end{array}
\begin{array}{c}
\\ \\
\left[ \begin{array}{cccccccc}
1 & 0 & 0 & 0 & 1 & 0 & 0 & 0\\
0 & 1 & 0 & 1 & 0 & 0 & 1 & 0\\
0 & 0 & 1 & 0 & 0 & 1 & 0 & 1\\
1 & 0 & 0 & 1 & 0 & 0 & 1 & 0\\
\end{array}\right]\\

\end{array}
$$
\begin{center} a) Matrix $F(2)$ encoding blank columns for the embedding $f_{3}: [3\times 7\times 4] \rightarrow Y_{3}$ \end{center}
$$
\begin{array}{c}
\left[
\begin{array}{cccc}
\mathbf{1/4} & \mathbf{1/4} & \ldots  & \mathbf{1/4}\\
\mathbf{3/8} & \mathbf{3/8} & \ldots  & \mathbf{3/8}\\
\mathbf{3/8} & \mathbf{3/8} & \ldots  & \mathbf{3/8}\\
\mathbf{3/8} & \mathbf{3/8} & \ldots  & \mathbf{3/8}\\ \hline
\mathbf{1/4} & \mathbf{1/4} & \ldots  & \mathbf{1/4}\\
\mathbf{3/8} & \mathbf{3/8} & \ldots  & \mathbf{3/8}\\
\mathbf{3/8} & \mathbf{3/8} & \ldots  & \mathbf{3/8}\\
\mathbf{3/8} & \mathbf{3/8} & \ldots  & \mathbf{3/8}\\ \hline
\mathbf{1/4} & \mathbf{1/4} & \ldots  & \mathbf{1/4}\\
\mathbf{3/8} & \mathbf{3/8} & \ldots & \mathbf{3/8}\\
\mathbf{3/8} & \mathbf{3/8} & \ldots & \mathbf{3/8}\\
\mathbf{3/8} & \mathbf{3/8} & \ldots & \mathbf{3/8}\\
\end{array}
\right] \\

\end{array}
\begin{array}{c}
2\\ 3\\ 3\\ 3\\ 2\\ 3\\ 3\\ 3\\ 2\\ 3\\ 3\\ 3\\ 
\end{array}
\begin{array}{c}
\\ \\ \\ \\
Knuth-like  \\ rounding\\ \rightarrow \\ \\ \\ \\ \\ \end{array}
\begin{array}{c}
\left[ \begin{array}{cccccccc}
1 & 0 & 0 & 0 & 1 & 0 & 0 & 0\\
0 & 1 & 0 & 1 & 0 & 0 & 1 & 0\\
0 & 0 & 1 & 0 & 0 & 1 & 0 & 1\\
1 & 0 & 0 & 1 & 0 & 0 & 1 & 0\\ \hline
0 & 1 & 0 & 0 & 0 & 1 & 0 & 0\\
0 & 0 & 1 & 0 & 1 & 0 & 0 & 1\\
1 & 0 & 0 & 1 & 0 & 0 & 1 & 0\\
0 & 1 & 0 & 0 & 1 & 0 & 0 & 1\\ \hline
0 & 0 & 1 & 0 & 0 & 1 & 0 & 0\\
0 & 1 & 0 & 1 & 0 & 0 & 0 & 1\\
1 & 0 & 0 & 0 & 1 & 0 & 1 & 0\\
0 & 1 & 1 & 0 & 0 & 1 & 0 & 0\\
\end{array}\right]\\

\end{array}
$$

%$$
%\begin{array}{c}
%\left[ \begin{tabular}{cccc}
%\mathbf{1/4} & \mathbf{1/4} & \ldots & \mathbf{1/4}\\
%\mathbf{3/8} & \mathbf{3/8} & \ldots & \mathbf{3/8}\\
%\mathbf{3/8} & \mathbf{3/8} & \ldots & \mathbf{3/8}\\
%\mathbf{3/8} & \mathbf{3/8} & \ldots & \mathbf{3/8}\\
%\end{tabular}\right] \\
%\begin{array}{cccc}
%5/8 & 5/8 & \ldots & 5/8\\
%\end{array}
%\end{array}
%$$
\begin{center} b) Matrix $F(2)$ encoding blank columns for the embedding $f_{3}: [3\times 7\times 4\times 3 ] \rightarrow Y_{3}$ \end{center}

$$
\begin{array}{c}
\left[ \begin{array}{cccc}
\mathbf{1/4} & \mathbf{1/4} & \mathbf{1/4} & \mathbf{1/4}\\
\mathbf{1/4} & \mathbf{1/4} & \mathbf{1/4} & \mathbf{1/4}\\
\mathbf{1/2} & \mathbf{1/2} & \mathbf{1/2} & \mathbf{1/2}\\
\end{array}\right] \\

\end{array}
\begin{array}{c}
1 \\ 1\\ 2\\  \end{array}
\begin{array}{c}
Knuth-like  \\ rounding\\ \rightarrow \\  \end{array}
\begin{array}{c}
\left[ \begin{array}{cccc}
1 & 0 & 0 & 0\\
0 & 0 & 1 & 0\\
0 & 1 & 0 & 1\\
\end{array}\right]\\

\end{array}
$$
\begin{center} c) Matrix $F(3)$ encoding blank $2$-levels for the embedding $f_{4}: [3\times 7\times 4\times 3 ] \rightarrow Y_{4}$ \end{center}

\begin{center} Table 1: Matrices encoding blank levels \end{center}

Next we apply the stacking map 
$\sigma_{2}: (I_{2}\circ f_{2})(G') \rightarrow  Y_{3}^{(u_{3}(G'))} = Y_{3}^{(8)}$ as defined above, thereby yielding 
the map $f_{3} = \sigma_{2}\circ I_{2}\circ f_{2}: G'\rightarrow Y_{3}^{(8)}$.  Note that $\sigma_{2}$ stacks the 12 many $2$-sections of Figure \ref{1thru12} onto $S_{2}^{1}$, 
with the result shown in Figure \ref{7and12stacking}.  The left subfigure shows the stacking of the first $7$ sections, while 
the right subfigure all $12$ sections.

Again for each stack address $(x,y)\in S_{2}^{1} \cong  \langle Y_{2} \rangle$ we indicate by label $j$, given to various cross sections of this stack, 
which image sets $\sigma_{2}(S_{2}^{j})$, $1\le j\le 12$, 
contribute points to this stack. 
 For example 
one can check in the right subfigure that for any stack address $(x,2)\in S_{2}^{1}$, $1\le x\le 4$, (so  $(x,2)$ lies in column $2$ of $S_{2}^{1}$) 
the image sets $\sigma_{2}(S_{2}^{j})$ 
contributing to $Stack_{3}((x,2),12)$ satisfy  
$j = 1,3,4,6,7,9,11 $ in order of increasing stack height.  To see why, refer to Figure \ref{1thru12}.  There you see that points in column $2$ 
of $S_{2}^{3}$ and $S_{2}^{4}$ ``fall through" blank column $2$ of $S_{2}^{2}$, points in column $2$ of $S_{2}^{6}$ and $S_{2}^{7}$ ``fall through" 
blank columns 2 in $S_{2}^{5}$ and $S_{2}^{2}$, points in column $2$ of $S_{2}^{9}$ ``fall through" 
blank columns 2 in $S_{2}^{8}$, $S_{2}^{5}$, and $S_{2}^{2}$, and so on.      
The resulting stack height is $|Stack_{3}((x,2),12)| = 7$.   By contrast, for any stack address $(x,5)\in  S_{2}^{1}$, $1\le x\le 4$, 
the sections contributing to such a stack are $j =  2,3,4,5,7,9,10,12 $ in order of increasing stack height, and 
the resulting stack height is $|Stack_{3}((x,5),12)| = 8$.  In fact the maximum stack height over all 32 stacks addressed by points of $S_{2}^{1}$ is $8$.

A more detailed look at the members of various stacks addressed by points of $S_{2}^{1}$ is given in Figure \ref{columns 4 and 5}.  Here we identify the points $x\in G'$ such that $(\sigma_{2} \circ I_{2}\circ f_{2})(x)$ belongs 
to stack addresses in columns $4$ and $5$ of $S_{2}^{1}$.  For example, consider the members 
of $Stack_{3}((3,4),7)$ (bolded in the figure), consisting by definition of those images $(\sigma_{2} \circ I_{2}\circ f_{2})(x)$ 
satisfying $(\sigma_{2} \circ I_{2}\circ f_{2})(x)_{1\rightarrow 2} = (3,4)\in S_{2}^{1}$, and $(I_{2}\circ f_{2})(x)\in S_{2}^{j}$ for some $1\le j\le 7$.  Here we must 
have $(I_{2}\circ f_{2})(x) = (3,d)$, where $d\equiv 4$(mod $8$) by the definition of $\sigma_{2}$ and $8(j-1)+1\le d\le 8j$ since $(I_{2}\circ f_{2})(x)\in S_{2}^{j}$.  We find such an $x$ when $j=1$, 
with $(I_{2}\circ f_{2})(x) = (3,4)\in S_{2}^{1}$, and Figure \ref{1thru12} shows that $x = (2,4,1,1)$.  Here $x_{1\rightarrow 3} = (2,4,1)$ since $x$ is the $4$'th  point on chain $2$ of $D_{2}^{1}$, 
while $x_{4} = 1$ since $x\in D_{3}^{1}$.  
There is no such $x$ when 
$j=2$ since column $4$ of $S_{2}^{2}$ is blank.  We find such an $x$ when $j=3$, thus satisfying $(I_{2}\circ f_{2})(x) = (3,20)\in S_{2}^{3}$, 
and the figure shows that $x = (2,4,3,1)$ since $x$ is point (2,4) in $D_{2}^{3}$.  The remaining two such points $x$ are found similarly; $x = (2,3,1,2)\in D_{2}^{5}$ 
with $(I_{2}\circ f_{2})(x) = (3,36)\in S_{2}^{5}$, and $x = (2,4,2,2)\in D_{2}^{6}$  
with $(I_{2}\circ f_{2})(x) = (3,44)\in S_{2}^{6}$.  For the last $x$ note that the first point of  $D_{2}^{6}$ in the same chain as $x$ is $(2,1,2,2)$ and $(I_{2}\circ f_{2})((2,1,2,2))\in S_{2}^{5}$.

To construct $f_{4} = \sigma_{3}\circ I_{3}\circ f_{3}$, we begin with the parameters needed to define $I_{3}$.  
Observe that $P_{3} = 3$ and $e_{3} = 7$, so $e_{3}-e_{2} = 2$ 
and $S_{3}^{j} = P(4) \times P(8) \times P(4)$ for each $1\le j\le 3$.  So $I_{3}$ has the form $I_{3}: Y_{3}^{(u_{3}(G'))} = Y_{3}^{(8)}\rightarrow S_{3}^{(P_{3})} = S_{3}^{(3)}$.  
Since $|D_{3}^{j}| = 84$, we 
get by (\ref{overv just enough cols general}) the sequence $s_{3}(1) =1$, $s_{3}(2) =1$, and $s_{3}(3) =2$.  A balanced distribution of blank $2$-levels (among 
the sections $S_{3}^{j}$) satisfying  (\ref{overv just enough cols general}) together with (\ref{overv row sums general}) - (\ref{overv initial row sums general}) is given 
by the $3\times 4$, $(0,1)$ matrix $F(3)$ given in Table 1c. 
We then apply the map $I_{3}: Y_{3}^{(8)}\rightarrow S_{3}^{(3)}$, which distributes the eight $2$-levels in $Y_{3}^{(8)}$ among the twelve $2$-levels of 
$S_{3}^{(3)}$, leaving four of the latter twelve levels blank.

The result is represented in the left side of Figure \ref{stacking in 4D} as the set of sections $S_{3}^{j}$, 
$1\le j\le 3$, each having four $2$-levels, with blank $2$-levels shaded.  For example, $S_{3}^{1}$ has $2$-level 1 as blank (as specified by 
row 1 of $F(3)$), $S_{3}^{2}$ has $2$-level 3 as blank (as specified in row 2), and $S_{3}^{3}$ has $2$-levels $2$ and $4$ as blank 
(as specified in row 3).

Now we apply the 
stacking map $\sigma_{3}: S_{3}^{(3)}\cap (I_{3}\circ f_{3})(G')\rightarrow Y_{4}^{(u_{4}(G'))} = Y_{4}^{(2)}$ (whose image recall we abbreviate as $\sigma_{3}(S_{3}^{(3)})$), 
which stacks each of the sets $S_{3}^{j}\cap (I_{3}\circ f_{3})(G')$, $1\le j\le 3$, in succession onto 
$S_{3}^{1}\cong \langle Y_{3}   \rangle$ as defined previously.  
One can check that the maximum  stack height $|Stack_{4}(z,3)|$ over all $128$ stack addresses $z\in \langle Y_{3}  \rangle$ is $2$, again either by checking 
that each column of $F(3)$ has at most (in fact exactly) $2$ zeros, or by seeing 
in Figure \ref{stacking in 4D} that each of the $128$ stack addresses in $ \langle Y_{3}  \rangle$ receives at most $2$ points 
under the map $\sigma_{3}$.  In the remainder of Figure \ref{stacking in 4D} we illustrate the stacking map $\sigma_{3}$ in stages.  
First $S_{3}^{2}$ is stacked on top of $S_{3}^{1} \cong \langle Y_{3} \rangle$, with the result that $64$ of the stack addresses $z$ 
(those lying in levels $2$ and $4$ of $ \langle Y_{3} \rangle$) so far have $4$-dimensional stack height 2 (i.e. $|Stack_{4}(z, 2)| = 2$ for these $z$), while 
$64$ of the stack addresses $z$ (those lying in levels $1$ and $3$ of $ \langle Y_{3} \rangle$) 
so far have $4$-dimensional stack height 1 (i.e. $|Stack_{4}(z, 2)| = 1$ for these $z$).  Next, $S_{3}^{3}$ is stacked on top of the previous stacking, so
that now every stack address has $4$-dimensional stack height $2$  (i.e. $|Stack_{4}(z, 3)| = 2$ for all $z\in \langle Y_{3} \rangle$).

Consider the final result $\sigma_{3}(S_{3}^{(3)})$, yielding the map $f_{4}$, illustrated in the upper right of Figure \ref{stacking in 4D}.  There, 
focus on the stack addresses $(x,y,z)\in  \langle Y_{3} \rangle$ with 
$z=2$, $1\le x\le 4$, $1\le y\le 8$, these lying in the second $2$-level of  $ \langle Y_{3} \rangle$ (since $z=2$).   
The sets $\sigma_{3}(S_{3}^{j})$, 
$1\le j\le 3$, contributing points to any such stack $Stack_{4}((x,y,2), 3)$ satisfy $j=1$ or $2$, where in Figure \ref{stacking in 4D} the contribution 
of $\sigma_{3}(S_{3}^{1})$ is represented by $1b$, and of $\sigma_{3}(S_{3}^{2})$ by $2b$.

 In Figure \ref{detail in 4D} we look  in detail at the individual stacks $Stack_{4}((3,1,2),3)$, $Stack_{4}((3,4,2),3)$, and 
 $Stack_{4}((1,4,2),3)$.  For 
example, the members of $Stack_{4}((3,1,2),3)$ are the two 
images $(\sigma_{3} \circ I_{3} \circ f_{3})(\alpha)$, indicated by their preimages $\alpha \in G'$ which are 
$\alpha = (3,2,2,1)$ and $\alpha = (2,1,4,2)$ in increasing order of stack height.

 \section{Tools for the general construction}
 
In this section we develop two tools used in our general construction: 
\newline (1) the designation of blank $(i-1)$-levels in $i$-sections $S_{i}^{j}$, $1\le j\le P_{i}$, and
\newline (2) the construction of an ordering (by consecutive integer labels) of the vertices of any hypercube such that for any reasonably long interval of successive label values, any two vertices 
whose labels lie in that interval are at fairly small hypercube distance.

 \subsection{The Construction of Blank Levels}
 
In this subsection we describe the sequence $\{ s_{i}(j) \}$, $1\le i\le k$, $1\le j\le P_{i}$, where 
 $ s_{i}(j)$ is the number of $(i-1)$-levels of $S_{i}^{j}$ that are designated blank 
 under the map $I_{i}\circ f_{i}$.  We show that this sequence satisfies equation (\ref{overv just enough cols general}).  That is,  
 for each $1\le r\le P_{i}$ we show that $S_{i}^{(r)}$ has just enough nonblank $(i-1)$-levels to contain $(I_{i}\circ f_{i})(D_{i}^{(r)})$.  Finally, we construct for each 
 $j$ the actual set of $ s_{i}(j)$ many $(i-1)$-levels in $S_{i}^{j}$ that are designated blank, and show that the 
 required properties (\ref{overv row sums general})-(\ref{overv initial row sums general}) are satisfied. This construction is based on a theorem of Knuth on simultaneous 
 roundings of sequences.

 Given $G = [a_{1}\times a_{2}\times \cdots \times a_{k}  ]$ and $1\le i\le k-1$, define the sequence  $\{ s_{i}(j) \}$, with $1\le j\le P_{i}$, by
 
\begin{equation}\label{blank formula}
s_{i}(j) = 2^{e_{i} - e_{i-1}} - \lceil \frac{a_{1}a_{2}\cdots a_{i}}{2^{e_{i-1}}} \rceil + \lfloor  j\phi_{i} \rfloor - \lfloor  (j-1)\phi_{i} \rfloor 
\end{equation}
 where $\phi_{i} = \lceil \frac{a_{1}a_{2}\cdots a_{i}}{2^{e_{i-1}}} \rceil -  \frac{a_{1}a_{2}\cdots a_{i}}{2^{e_{i-1}}}$.

 \begin{lemma} \label{blank arithmetic}
 
 Let  $P_{i} = a_{i+1}a_{i+2}\cdots a_{k}$, $1\le r\le P_{i}$, and $1\le i\le k$.  
 
\noindent \rm{(a)} The sequence $\{ s_{i}(j)$\},  $1\le j\le P_{i}$, defined 
above satisfies $\lceil  \frac{ra_{1}a_{2}... a_{i}} {2^{e_{i-1}}}  \rceil + \sum_{j=1}^{r}s_{i}(j)  = r 2^{e_{i}-e_{i-1}}$.  
In particular, taking $s_{i}(j)$ to be the number of blank $(i-1)$-levels of $S_{i}^{j}$, then the number of nonblank $(i-1)$-levels in 
$S_{i}^{(r)}$ is $\lceil  \frac{ra_{1}a_{2}... a_{i}} {2^{e_{i-1}}}  \rceil = \lceil  \frac{|D_{i}^{(r)}|} {2^{e_{i-1}}}  \rceil $.

\noindent \rm{(b)} $s_{i}(j) = p$ or $p+1$, where $p = 2^{e_{i} - e_{i-1}} - \lceil \frac{a_{1}a_{2}\cdots a_{i}}{2^{e_{i-1}}} \rceil$.  Also 
$\frac{s_{i}(j)}{2^{e_{i} - e_{i-1}}}\le \frac{1}{2}$.

 \end{lemma}     
 
 \begin{proof}For (a), observe that the sum $\sum_{j=1}^{r}s_{i}(j)$ telescopes.  Note also that $r\lceil  \frac{a_{1}a_{2}... a_{i}} {2^{e_{i-1}}}  \rceil$ is an integer.  Thus 
$\sum_{j=1}^{r}s_{i}(j)  =  r2^{e_{i} - e_{i-1}} -  r\lceil  \frac{a_{1}a_{2}... a_{i}} {2^{e_{i-1}}}  \rceil + \lfloor  r \lceil  \frac{a_{1}a_{2}... a_{i}} {2^{e_{i-1}}}  \rceil -  r  \frac{a_{1}a_{2}... a_{i}} {2^{e_{i-1}}}  \rfloor 
= r2^{e_{i} - e_{i-1}} + \lfloor -r  \frac{a_{1}a_{2}... a_{i}} {2^{e_{i-1}}}  \rfloor  =  r2^{e_{i} - e_{i-1}} -  \lceil r  \frac{a_{1}a_{2}... a_{i}} {2^{e_{i-1}}}  \rceil $.  Part (a) follows.

Consider (b).  Since $\phi_{i}<1$ we have $0\le \lfloor  j\phi_{i} \rfloor - \lfloor  (j-1)\phi_{i} \rfloor \le 1$, giving the first statement.    
Since 
$a_{1}a_{2}\cdots a_{i}\geq 2^{e_{i}-1}+1$, we get $s_{i}(j)\le 2^{e_{i} - e_{i-1}} - \lceil \frac{2^{e_{i}-1}+1}{2^{e_{i-1}}} \rceil +1\le 2^{e_{i} - e_{i-1}-1}$.  
The second statement follows.       \end{proof}

We now specify, for each $2\le i\le k-1$ and $1\le j\le P_{i}$, which $ s_{i}(j) $ out of the $2^{e_{i}-e_{i-1}}$ many $(i-1)$-levels in $S_{i}^{j}$ will be designated blank. 
Keep in mind that this designation must satisfy the balance properties (\ref{overv row sums general})-(\ref{overv initial row sums general})  we required in the overview.  For this, we need the following theorem of Knuth \cite{KN}.

Let $x_{1}, \ldots, x_{n}$ be a sequence of reals, and $\gamma$ a permutation of $\{ 1,2,\ldots,n \}$.  Let $S_{k} = x_{1} + \cdots + x_{k}$ and 
$\Sigma_{k} = x_{\gamma(1)} + \cdots +  x_{\gamma(k)}$ be the partial sums for these two independent orderings of the $x_{i}$'s.  Consider a \textit{rounding} of 
the $x_{i}$'s; that is, a designation of integers $\bar{x}_{i}$ satisfying $\lfloor  x_{i}  \rfloor \le  \bar{x}_{i} \le    \lceil x_{i} \rceil$ for $1\le i\le n$.  Now 
let the corresponding partial sums be $\bar{S}_{k} =  \bar{x}_{1} + \cdots + \bar{x}_{k}$  and  
 $\overline{\Sigma}_{k} = \bar{x}_{\gamma(1)} + \cdots + \bar{x}_{\gamma(k)}$.  We say that a rounding of the $x_{i}$'s is \textit{consistent} with 
 the original sequence $\{ x_{i} \}$ (resp. with the permuted sequence under $\gamma$) if $\lfloor S_{k}  \rfloor \le \bar{S}_{k} \le \lceil S_{k}   \rceil $ 
 (resp. $\lfloor \Sigma_{k}  \rfloor \le \overline{\Sigma}_{k} \le \lceil \Sigma_{k}   \rceil $) for each $1\le k\le n$.  We also say that the rounding  
 is a \textit{two-way} rounding if it is simultaneously consistent with both the original sequence and the permuted sequence under $\gamma$.

 \begin{theorem} \label{Knuth} \cite{KN} For any finite sequence $x_{1},\ldots, x_{n}$ of reals and any permutation $\gamma$ of $\{ 1,2,\ldots,n\}$, there is 
 a two-way rounding of the $x_{i}$.       
 
 \end{theorem}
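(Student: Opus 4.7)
The plan is to recast the existence of a two-way rounding as integer feasibility of a polytope that obviously has a fractional feasible point (the original sequence itself), and then establish integrality of the polytope by exhibiting a network-flow structure whose node-arc incidence matrix is totally unimodular.

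First I would reduce to the case $x_i\in[0,1)$ by replacing each $x_i$ with its fractional part $f_i=x_i-\lfloor x_i\rfloor$; integer shifts translate every partial sum by the same integer in either ordering, so consistency is preserved. A rounding then corresponds to a vector $\bar z\in\{0,1\}^n$ with $\bar z_i=0$ whenever $f_i=0$, and the two consistency conditions become $\lfloor s_k\rfloor\le\sum_{i\le k}\bar z_i\le\lceil s_k\rceil$ and $\lfloor\sigma_k\rfloor\le\sum_{j\le k}\bar z_{\gamma(j)}\le\lceil\sigma_k\rceil$ for every $k$, where $s_k=\sum_{i\le k}f_i$ and $\sigma_k=\sum_{j\le k}f_{\gamma(j)}$.

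Let $P\subset[0,1]^n$ be the polytope cut out by the same inequalities with $\bar z$ real. The vector $(f_1,\ldots,f_n)$ lies in $P$, so $P\neq\emptyset$; it therefore suffices to show $P$ has an integer vertex. I would build an auxiliary flow network $N$ consisting of two parallel paths $\alpha_0\to\alpha_1\to\cdots\to\alpha_n$ and $\beta_0\to\beta_1\to\cdots\to\beta_n$ sharing a common source and sink, where the arc $\alpha_{k-1}\alpha_k$ is identified with the arc $\beta_{\gamma^{-1}(k)-1}\beta_{\gamma^{-1}(k)}$ and carries flow $\bar z_k$. The prefix-sum windows $[\lfloor s_k\rfloor,\lceil s_k\rceil]$ and $[\lfloor\sigma_k\rfloor,\lceil\sigma_k\rceil]$ are enforced by auxiliary capacitated arcs from source to $\alpha_k$ and to $\beta_k$ respectively. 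The two families of prefix-sum constraints individually give consecutive-ones (hence totally unimodular) submatrices, and the arc-identification realizes them within a single network whose node-arc incidence matrix inherits total unimodularity, so $P$ is integral, and since the fractional vector $f$ is feasible an integer feasible flow exists.

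The main obstacle is correctly realizing the shared-arc identification so that the combined constraint matrix remains totally unimodular; a careless merging can introduce equality constraints between the two consecutive-ones blocks that destroy unimodularity, and the two orderings may interact in ways that require a careful acyclic orientation. As a backup I would fall back on an iterative rounding argument: while some $\bar z_i\in(0,1)$, find a closed alternating walk in the support of the fractional coordinates along which signed unit pushes preserve every partial-sum bound in both orderings. Such a walk must exist whenever some coordinate is fractional, by a dimension count on the active partial-sum constraints in the two orderings, and each push strictly reduces the number of fractional coordinates; after finitely many steps one arrives at an integer point of $P$, yielding the desired two-way rounding.
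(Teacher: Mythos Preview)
The paper does not supply its own proof of this statement; it simply cites Knuth and remarks that his argument is network-flow based and constructive. Your proposal is therefore aimed in exactly the direction the paper defers to.

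That said, the specific construction you sketch has a real gap, which you yourself flag. ``Identifying'' the arc $\alpha_{k-1}\alpha_k$ with the arc $\beta_{\gamma^{-1}(k)-1}\beta_{\gamma^{-1}(k)}$ cannot be carried out as a graph operation unless $\gamma$ is essentially a shift: identifying the arcs forces $\alpha_{k-1}=\beta_{\gamma^{-1}(k)-1}$ and $\alpha_k=\beta_{\gamma^{-1}(k)}$ simultaneously for every $k$, which is inconsistent for a general permutation. If instead you merely couple the two arcs by a shared flow variable, you no longer have a single network, and the combined constraint matrix---two consecutive-ones blocks written in different column orderings---is not obviously totally unimodular. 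Knuth's actual network is organized not around the indices $1,\ldots,n$ but around the integer \emph{thresholds} that the two running sums cross: after normalizing $S_n$ to an integer, his nodes correspond to these integer levels on each side, with unit-capacity arcs recording which element $x_k$ carries each partial sum past each level, and an integral max-flow then selects which $\bar x_k$ round up. Your backup iterative-rounding plan is a reasonable fallback, but the ``dimension count on the active partial-sum constraints'' is carrying the entire load---it is essentially the assertion that every vertex of $P$ is integral, which is precisely what remains to be proved.
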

 
The existence of two-way roundings was shown earlier by Spencer \cite{SP} by probabilistic methods, as a corollary to more general 
results on the discrepancy of set systems \cite{LSV}.  Knuth's network flow based proof of Theorem \ref{Knuth}, omitted here, is constructive and yields improved error bounds.  The two-way 
rounding produced is not necessarily unique.

As a consequence of Knuth's theorem, we obtain the following theorem (from \cite{D1} and \cite{D2}) on roundings of matrices which are consistent with respect to all 
initial row and column sums.  We give its short proof for completeness.  This extends the rounding lemma of Baranyai (\cite{Bar}) giving such consistency with respect 
to all row sums, to all column sums, and to the sum of all matrix entries.  Additional results on roundings of matrices, including extensions of previous work, applications to digital 
halftoning, and improved running time and error bounds in implementation can be found in the work 
of Doerr (\cite{D3}, \cite{D1}, \cite{D2} and others), Asano (\cite{As1} and \cite{As2}), and Wright \cite{SW} among others.

 \begin{theorem} \label{Doerr's theorem}
 Let $T = (t_{ij})$ be an $m\times n$ matrix with $0\le t_{ij}\le 1$ for all $i$ and $j$.  Then there exists an $m\times n$, $(0,1)$
 ``rounding" matrix $F = (f_{ij})$ of $T$; that is, where $f_{ij} = \lfloor t_{ij}  \rfloor$ or $\lceil t_{ij}  \rceil$, satisfying the following properties.
 
 \medskip
 
 \noindent{\rm{(a)}} For each $b$ and $i$, $1\le b\le n$, $1\le i\le m$, we have $|\sum_{j=1}^{b}(t_{ij}-f_{ij})| < 1$.
 
 \medskip
 
 \noindent{\rm{(b)}} For each $b$ and $j$, $1\le b\le m$, $1\le j\le n$, we have $|\sum_{i=1}^{b}(t_{ij}-f_{ij})| < 1$.
 
 \medskip
 
 \noindent{\rm{(c)}} $|\sum_{i=1}^{m} \sum_{j=1}^{n} (t_{ij}-f_{ij})| < 1$.

 \end{theorem}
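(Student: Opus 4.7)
The plan is to reduce the theorem to Knuth's two-way rounding result stated immediately above. I would first flatten the matrix $T$ into a sequence of length $mn$ in row-major order by setting $x_{(i-1)n+j}:=t_{ij}$, and let $\gamma$ be the permutation sending $(i-1)n+j$ to $(j-1)m+i$, so that the $\gamma$-permuted sequence enumerates $T$ in column-major order. Knuth's theorem then produces a rounding $\bar x_k\in\{0,1\}$ (integrality follows from $0\le t_{ij}\le 1$) which is simultaneously consistent with the partial sums of both orderings. Define $f_{ij}:=\bar x_{(i-1)n+j}$; this is the candidate rounding matrix.

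Property (c) falls out immediately: the grand total $\sum_{i,j}t_{ij}$ is the final row-major partial sum $S_{mn}$, whereas $\sum_{i,j}f_{ij}=\bar S_{mn}$, so consistency yields $\bigl|\sum_{i,j}(t_{ij}-f_{ij})\bigr|=|S_{mn}-\bar S_{mn}|<1$. For (a) and (b), the initial row sum $\sum_{j=1}^{b}(t_{ij}-f_{ij})$ for $i\ge 2$ is not a single Knuth discrepancy but the \textit{difference} $(S_{(i-1)n+b}-\bar S_{(i-1)n+b})-(S_{(i-1)n}-\bar S_{(i-1)n})$ of two such discrepancies, and likewise for initial column sums under the $\gamma$-ordering.

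The main obstacle is sharpening the naive triangle-inequality bound of $<2$ obtained from the two consistency errors into the strict $<1$ required. I would exploit that Knuth's theorem furnishes a \textit{single} rounding jointly consistent with both orderings: each row-boundary index $k=(i-1)n$ is sent by $\gamma$ to a specific column-major position where column-major consistency supplies an additional constraint on $\bar S_k-S_k$, and a careful coupled analysis of these paired constraints should rule out the configurations in which the two row-major errors attain opposite extremes. Should this direct route prove cumbersome, an alternative plan is a two-stage Knuth application in which the row marginals $R_1,\ldots,R_m$ are first rounded to integers $\bar R_i$ with consistent initial sums, and then 1D Knuth is applied within each row using $\bar R_i$ as the exact row-sum target; carrying out the symmetric column-side procedure and reconciling the two roundings should then deliver (a), (b), and (c) simultaneously.
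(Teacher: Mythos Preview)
Your reduction to Knuth's theorem and the observation that initial row sums become a \emph{difference} of two partial-sum discrepancies are both correct, and you rightly identify that the triangle inequality only gives $<2$. The gap is in what follows: neither the ``coupled analysis'' nor the two-stage scheme actually closes this gap. For the first, the image $\gamma((i-1)n)$ lands at column-major position $(n-1)m+(i-1)$, whose consistency constraint controls a sum over all of the first $n-1$ columns plus a prefix of column $n$; this has no direct bearing on the row-major error at index $(i-1)n$, and in general there is no mechanism forcing the two row-major errors $S_{(i-1)n}-\bar S_{(i-1)n}$ and $S_{(i-1)n+b}-\bar S_{(i-1)n+b}$ to avoid opposite extremes. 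The two-stage alternative runs into the standard difficulty that two separately produced roundings (one per row, one per column) need not agree entrywise, and reconciling them is essentially the original problem.

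The paper's device is to bypass the difference-of-two-errors issue altogether: before applying Knuth, it augments $T$ to an $(m+1)\times(n+1)$ matrix $Y$ by appending a final column $y_{i,n+1}=\lceil\sum_j t_{ij}\rceil-\sum_j t_{ij}$ and a final row analogously, so that every row sum and every column sum of $Y$ is an integer. Now apply Knuth to $Y$ with row-major versus column-major orders. Because each full row sum of $Y$ is already integral, its rounding equals itself, so the cumulative row-major error is exactly zero at every row boundary $k=i(n+1)$. Hence the initial row sum error $\sum_{j=1}^{b}(y_{ij}-\bar y_{ij})$ is a \emph{single} Knuth discrepancy, strictly less than $1$; restricting to the original $m\times n$ block gives (a). The same argument on columns gives (b), and (c) follows from the vanishing of the full row, column, and total errors together with $|y_{m+1,n+1}-\bar y_{m+1,n+1}|<1$. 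The augmentation is the missing idea in your outline.
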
 
 
 \begin{proof} For the most part we paraphrase proofs in \cite{D1} and \cite{D2}.  First consider parts (a) and (b).  We construct an $(m+1)\times(n+1)$ matrix $Y$ from $T$ which has 
 integral row and column sums by appending to each row and each column a last entry in $[0,1)$ just large 
 enough to make that row or column have an integer sum.  Specifically, let $y_{ij} = t_{ij}$ for $1\le i\le m$ and 
 $1\le j\le n$, and then $y_{m+1,j} = \lceil \sum_{i=1}^{m}t_{ij}  \rceil - \sum_{i=1}^{m}t_{ij}$ for $1\le j\le n$,  
 and $y_{i,n+1} = \lceil \sum_{j=1}^{n}t_{ij}  \rceil - \sum_{j=1}^{n}t_{ij}$ for $1\le i\le m$, and $y_{m+1,n+1} = \sum_{i=1}^{m} \sum_{j=1}^{n} t_{ij}$.  Then $Y$ has integral row and column sums.
 
 Consider now the following two orderings of the entries of $Y$.  First we order these entries by ``row-major" order; that is, first the 
 entries of row $1$, then those of row $2$, etc., until row $m+1$, and within any row $i$ place $y_{ij}$ ahead of $y_{ir}$ iff $j<r$.  Similarly consider the ``column-major" order where 
 we place $y_{ij}$ before $y_{rs}$ iff either $j<s$, or $j=s$ and $i<r$. 
 
Applying Knuth's theorem, there is a (not necessarily unique) two-way rounding matrix $\overline{Y} = \bar{y}_{ij}$ relative to these two orders.  Since every row and 
 column sum of $Y$ is already an integer, we get $\sum_{i=1}^{p}\sum_{j=1}^{n+1}(y_{ij} - \bar{y}_{ij}) = 0$ for any $1\le p\le m+1$, and
 $\sum_{j=1}^{q}\sum_{i=1}^{m+1}(y_{ij} - \bar{y}_{ij}) = 0$ for any $1\le q\le n+1$.  Thus taking suitable partial sums in the row major order we get initial 
 row sum estimates  $|\sum_{j=1}^{b}(y_{ij} - \bar{y}_{ij})| < 1$ for any fixed $i$ and $1\le b\le n+1$.  Similarly, suitable partial sums in column major 
 order yield  $|\sum_{i=1}^{b}(y_{ij} - \bar{y}_{ij})| < 1$ for any fixed $j$ and $1\le b\le m+1$.  Finally, let $F$ be the upper left $m\times n$ submatrix of $\overline{Y}$; 
 that is, $F = (f_{ij})$ where $f_{ij} = \bar{y}_{ij}$ for $1\le i\le m$ and $1\le j\le n$.     
 Then recalling that $t_{ij} = y_{ij}$ for such $i$ and $j$, we get parts (a) and (b) of the theorem. 
 
 For (c), let $R = \sum_{i=1}^{m+1} \sum_{j=1}^{n+1} (y_{ij}-\bar{y}_{ij})$, $S = \sum_{i=1}^{m+1}(y_{i,n+1}-\bar{y}_{i,n+1})$, and $S' = \sum_{j=1}^{n+1}(y_{m+1,j}-\bar{y}_{m+1,j})$, 
 noting that these three quantities are all $0$ since each row or column sum in $Y$ is integral.  Then 
 $|\sum_{i=1}^{m} \sum_{j=1}^{n} (t_{ij}-f_{ij})| = |R - S - S' + (y_{m+1,n+1} - \bar{y}_{m+1,n+1})| \le | y_{m+1,n+1} - \bar{y}_{m+1,n+1}| < 1$.     
 \end{proof}
 
 \medskip
 
 The setting in which we apply Theorem \ref{Doerr's theorem} is as follows.  Let $X = \{s_{1}, s_{2},\ldots, s_{m}  \}$ be an 
 integer sequence such that for all $1\le i\le m$ we have $s_{i} = k$ or $k+1$ for some fixed integer $k$ independent of $i$.  Also let $n$ be a positive integer satisfying $k+1\le n$.  Define 
 the $m\times n$ matrix $T^{X} = (t_{ij})$ as follows.  For each fixed row index $i$, $1\le i\le m$, let $t_{ij} = \frac{s_i}{n}$ for 
 all $j$, $1\le j\le n$.  Thus all entries in a given row of  $T^{X}$ have the same constant value.  
 For some rows this constant is  $\frac{k}{n}$ while for the rest it is $\frac{k+1}{n}$, and row $i$ 
 of $T^{X}$ has row sum $s_{i}$.  Now let $F^{X} = (f_{ij})$ be an $m\times n$, $(0,1)$ rounding matrix of $T^{X}$ as guaranteed to exist 
 by Theorem \ref{Doerr's theorem}; that is, with $T^{X}$ and $F^{X}$ playing the roles of $T$ and $F$ respectively in that theorem.  For suitably chosen 
 integers $m,n$ and integer sequence $X$ determined by the construction in the next 
 section, this $F^{X}$ will be, for fixed $i$, the matrix $F(i)$ introduced in the previous section that encodes which $(i-1)$-levels of $Y_{i}$ will be designated blank.  
 This $F^{X}$ has certain balance properties described in the following theorem.
 
 \begin{theorem}\label{balance properties}
 
 Let $k$ and $n$ be positive integers with $1\le k+1\le n$, and $X = (s_{1}, s_{2},\cdots, s_{m} )$ a sequence with $s_{i} = k$ or $k+1$ for all $i$.  
 Let $T^{X} = (t_{ij})$ and $F^{X} = (f_{ij})$, a rounding of $T^{X}$, be the $m\times n$ matrices as defined above.  Then $F^{X}$ has the 
 following properties.
 
 \medskip
 
\noindent{\rm{(a)}} $\sum_{j=1}^{n} f_{ij} = s_{i}$ for $1\le i\le m$.

\medskip

\noindent{\rm{(b)}} $|\sum_{j=1}^{b} f_{jr} - \sum_{j=1}^{b} f_{js}|\le 1$ for $1\le r,s\le n$, $1\le b\le m$.

\medskip

\noindent{\rm{(c)}} $|\sum_{j=1}^{b} f_{rj} - \sum_{j=1}^{b} f_{sj}|\le 2$ for $1\le r,s\le m$, $1\le b\le n$.

 \end{theorem}
 
 \begin{proof} For (a), note that row $i$ of $T^{X}$ has the integer $s_{i}$ as its sum.  Then (a) follows from Theorem  \ref{Doerr's theorem}a on taking 
 $b=n$.
 
 For (b), observe that for any $b, 1\le b\le m$, the sequences formed by the first $b$ entries in any two columns $r$ and $s$ of $T^{X}$ are identical.  So 
  $\sum_{j=1}^{b} t_{jr} = \sum_{j=1}^{b} t_{js}$ for any $r$ and $s$.  Thus by Theorem \ref{Doerr's theorem}b, $\sum_{j=1}^{b} f_{jr}$ 
 and $\sum_{j=1}^{b} f_{js}$ are roundings of the same quantity, and hence can differ by at most $1$, proving (b).
 
 For (c), recall that any two rows $r$ and $s$ of $T^{X}$ are either identical, or one has constant entries $\frac{k}{n}$ and the other constant entries $\frac{k+1}{n}$.  So the 
 maximum difference between any two initial row sums of $T^{X}$ occurs in the second possibility, and in that case the difference we get is 
 $|\sum_{j=1}^{b} t_{rj} - \sum_{j=1}^{b} t_{sj}|\le \frac{b}{n} \le 1$.  By Theorem \ref{Doerr's theorem}c the initial row sums $\sum_{j=1}^{b} f_{rj}$ and $\sum_{j=1}^{b} f_{sj}$ 
 of $F^{X}$ are roundings of the corresponding initial row sums in $T^{X}$.  Since the latter sums differ by at most $1$, part (c) follows, where the upper bound of $2$ can be achieved 
 only if there is an integer strictly between the corresponding initial row sums in $T^{X}$.          
 \end{proof}
 
For any two integers $r$ and $s$, $1\le r,s\le m$, we will need to bound the column difference between 
the $p$'th zero from the left in row $r$ of $F^{X}$ and the $q$'th zero from the left in row $s$ of $F^{X}$ as a function of $|p-q|$, independent of $r$ and $s$.  For this purpose, let $N_{i}(d)$ be the column 
index of the $d$'th zero from the left in row $i$ of $F^{X}$; that is, $N_{i}(d) = min\{b: b = d +  \sum_{j=1}^{b} f_{ij}   \}$.  We omit mention of $X$ in the notation $N_{i}(d)$, since $X$ will be clear by context. 
We will call the entry $f_{ih}$ {\it forward} if $\sum_{j=1}^{h}f_{ij} = \lceil  \sum_{j=1}^{h}t_{ij}   \rceil$.  Otherwise we call $f_{ih}$ {\it backward}, so in that case 
$\sum_{j=1}^{h}f_{ij} = \lceil  \sum_{j=1}^{h}t_{ij}   \rceil - 1 $.   
Recall that every entry of $F^{X}$ is either forward or backward by Theorem  \ref{Doerr's theorem}a. 

It will be convenient to interpret the entries $f_{ij}$ of the $m\times n$ matrix $F^{X}$ and the function $N_{i}(*)$ using "wraparound".  For example we let  
$f_{i,n+3} = f_{i+1,3}$.  Similarly if row $i$ of $F^{X}$ has $q$ many $0$'s then we let $N_{i}(q+3) = N_{i+1}(3)$, while if row $i+1$ has $q'$ many $0$'s then 
let $N_{i}(q+q'+3) = N_{i+2}(3)$, and so on.  Also for $t\le 0$, let $N_{i+1}(t) = N_{i}(q+t)$.

 \begin{corollary}\label{zeros column index}
 
 Let $X = \{ s_{1}, s_{2}, \ldots, s_{m} \}$, $T^{X} = (t_{ij})$, $F^{X} = (f_{uv})$, and $k$ be as in the preceding theorem, and assume now that $k+1\le \frac{n}{2}$.  Let $c = \frac{k+1}{n}$.
 Then the $m\times n$, $(0,1)$-matrix $F^{X}$ has the following properties.  
 
  \medskip
  
\noindent{\rm{(a)}} If $f_{ih}$ is forward, then $\sum_{j=h+1}^{h+2e}f_{ij}\le e$.  In particular, if $f_{i,N_{i}(d)}$ is forward, \newline then $N_{i}(d+e) - N_{i}(d) \le 2e$. 
  
\medskip
  
 \noindent{\rm{(b)}} If $f_{ih}$ is backward, then $\sum_{j=h+1}^{h+2e}f_{ij}\le e+1$.  In particular, if $f_{i,N_{i}(d)}$ is backward, \newline then $N_{i}(d+e) - N_{i}(d) \le 2e+2$. 
  
 \medskip
 
 \noindent{\rm{(c)}} For integers $r,s,d,e$ with $1\le r,s\le m$, we have $N_{r}(d+e) - N_{s}(d)\le 2e + 4$.  
 
 \medskip
 
 \noindent{\rm{(d)}} Let $2\le i\le k-1$.  Set $m = P_{i} = a_{i+1}a_{i+2}\cdots a_{k}$, $n = 2^{e_{i}-e_{i-1}}$, $k =$ min$\{ s_{i}(j): 1\le j\le P_{i}\}$, 
 $s_{j} = s_{i}(j)$, $1\le j\le m = P_{i}$, and $f_{uv} = f_{uv}(i)$. 
 With these settings, $F^{X}$ satisfies (\ref{overv row sums general})-(\ref{overv initial row sums general}) (which include 
 the conditions (\ref{overv row sums})-(\ref{overv initial row sums}) as the special case $i=2$).  

 \end{corollary} 
 
 \begin{proof} For part (a), begin by observing that for any positive integer $p$ we have $\sum _{j=N_{i}(d)+1}^{N_{i}(d)+p}t_{ij}\le pc\le \frac{p}{2}$.  Now take 
 $p=2e$ to be an even integer.  Then since $f_{ih}$ is forward, 
 the number of $1$'s among the entries $f_{ij}$, $h+1\le j\le h+2e$, is at most $e$, so $\sum_{j=h+1}^{h+2e}f_{ij}\le e$ as claimed.  For the second statement, since the number of these entries is $2e$
 and $\sum_{j=h+1}^{h+2e}f_{ij}\le e$, it follows that    
 the number of $0$'s among these must entries is at least $e$, and hence $N_{i}(d+e) - N_{i}(d)\le 2e$.  
 
For part (b), we proceed as in part (a), taking $p=2e$ to be an even integer.  The difference is that since $f_{i,N_{i}(d)}$ is backward,  the number 
 of $1$'s among the entries $f_{ij}$, $h+1\le j\le h+2e$ can be at most $1 + e$ (since that number now includes the $1$ corresponding 
 to  $\lceil  \sum_{j=1}^{N_{i}(d)}t_{ij}   \rceil$).  It follows that $\sum_{j=h+1}^{h+2e}f_{ij}\le e+1$.  For the second statement, since $\sum_{j=h+1}^{h+2e}f_{ij}\le e+1$ 
 it follows that  the number of $0$'s among these entries is at least $e-1$.  
 Now replacing $2e$ by $2e+2$ in the above reasoning, we find that the number of $0$'s among 
 the $2e+2$ entries $f_{ij}$, $h+1\le j\le h+2e+2$, is at least $e$.  It follows that $N_{i}(d+e) - N_{i}(d) \le 2e+2$.
 
 Consider now part (c).  By Theorem \ref{balance properties}c we have $N_{r}(d-2)\le N_{s}(d)$; that is, there are at least $d-2$ many $0$'s among the 
 entries $f_{rj}$, $1\le j\le N_{s}(d)$.  Suppose first that $N_{r}(d-1) > N_{s}(d)$, so that there are exactly $d-2$ many $0$'s among the 
 entries $f_{rj}$, $1\le j\le N_{s}(d)$.  It follows that $\sum_{j=1}^{N_{s}(d)}f_{rj} - \sum_{j=1}^{N_{s}(d)}f_{sj} = 2$.  Thus $f_{r,N_{s}(d)}$ is forward.  
 So by part (a) we have $\sum_{N_{s}(d)+1}^{N_{s}(d)+2e+4}f_{rj}\le e+2$.  So there are at least $e+2$ many $0$'s among the entries 
 $f_{rj}$, $N_{s}(d)+1\le j\le N_{s}(d)+2e+4$.  Combining these $0$'s with the $d-2$ many $0$'s  among the entries  $f_{rj}$, $1\le j\le N_{s}(d)$, obtain 
 $N_{r}(d+e)\le N_{s}(d) + 2e + 4$, as required.  Now assume $N_{r}(d-1) \le N_{s}(d)$.  Then we have 
 $N_{r}(d+e) - N_{s}(d)\le N_{r}(d+e) - N_{r}(d-1)\le 2(e+1) + 2 = 2e+4$, where we have used the more generous bound from part (b) in bounding 
$N_{r}(d+e) - N_{r}(d-1)$.  This completes (c).

 Finally consider part (d).  By Lemma \ref{blank arithmetic}b we have $s_{i}(j) = k$ or $k+1$ for each term $s_{i}(j)$, $1\le j\le P_{i}$, of our sequence $X$.  Hence 
 we may apply Theorem \ref{balance properties}a,b,c with the given settings for $m,n,X$, and for the entries $f_{uv}$ of $F^{X}$, to obtain 
 (\ref{overv row sums general}), (\ref{overv initial column sums general}), and (\ref{overv initial row sums general}) 
 respectively.       \end{proof} 
 
We now apply Theorems \ref{Doerr's theorem} and \ref{balance properties} to construct the matrices $F(i)$ of the previous section.  Recall that $F(i) = (f_{cd}(i))$ is 
the $P_{i}\times 2^{e_{i}-e_{i-1}}$, (0,1)-matrix such that for $1\le d\le 2^{e_{i}-e_{i-1}}$ and $1\le c\le P_{i}$,  the $(i-1)$-level $Y_{i}^{(c-1)2^{e_{i}-e_{i-1}} + d }$ 
of $i$-section $S_{i}^{c}$ (i.e. the $d$'th $(i-1)$-level of $S_{i}^{c}$) is blank precisely when $f_{cd}(i) = 1$.

\medskip

\begin{center} \textbf{Construction of the matrix $F(i)$, $1\le i\le k-1$}
\end{center}

\noindent{\textbf{1.}} Given $G$ and fixed $i$, compute the sequence $X = \{s_{i}(j) \}, 1\le j\le P_{i}$, using formula (\ref{blank formula}).

\smallskip

\noindent{\textbf{2.}} Form the $P_{i}\times 2^{e_{i}-e_{i-1}}$ matrix $T^{X}$ and construct rounding $F^{X}$ of $T^{X}$ as in Theorem \ref{balance properties}; that is, letting 
$m = P_{i}$, $n = 2^{e_{i}-e_{i-1}}$, and $s_{j} = s_{i}(j)$ in the definition of $T^{X}$ (preceding Theorem \ref{balance properties}).  We note that the hypotheses of 
Theorem \ref{balance properties} hold with this $X$ by Lemma \ref{blank arithmetic}b.

\smallskip

\noindent{\textbf{3.}} Let $F(i) = F^{X}$.

\medskip

To illustrate, recall $G'' = [3\times 7\times 4]$ from the previous section.  Toward constructing $F(2)$, apply formula (\ref{blank formula}) to obtain the sequence  
$X = \{s_{2}(1)=2, s_{2}(2)=3, s_{2}(3)=3, s_{2}(4)=3 \}$. So the $P_{2}\times 2^{e_{2}-e_{1}} = 4\times 8$ matrix  
 $T^{X}$ has constant row entries $\frac{s_{2}(r)}{8} = \frac{1}{4}$ or  constant row entries $\frac{s_{2}(r)}{8} = \frac{3}{8}$ 
 in any given row $r$, $1\le r\le 4$, as shown at left in Table 1a.  The rounding of $T^{X}$ by Knuth's network flow method or Doerr's approach (Theorems \ref{Knuth} and \ref{Doerr's theorem})  
 yields, as one possibility, the $(0,1)$ matrix $F(2): = F^{X}$ at right in Table 1a.  Next recall $G' = [3\times 7\times 4\times 3]$.  To construct $F(2)$, 
 apply formula (\ref{blank formula}) to obtain the sequence $X = \{ s_{2}(j) \}$, $1\le j\le 12$, given by $X = (2,3,3,3,2,3,3,3,2,3,3,3)$.  So the 
 $P_{2}\times 2^{e_{2}-e_{1}} = 12\times 8$ matrix $T^{X}$ has constant row entries  $\frac{1}{4}$ or $\frac{3}{8}$ in any given row, and is shown at left in Table 1b.  
 Again applying Theorems \ref{Knuth} and \ref{Doerr's theorem} 
 gives a possible rounding of $T^{X}$ given by $F(2) = F^{X}$ at right in Table 1b.  Toward constructing $F(3)$, still for $G'$, we apply formula (\ref{blank formula}) to obtain the sequence 
 $X = \{s_{3}(1)=1, s_{3}(2)=1, s_{3}(3)=2 \}$.  The $P_{3}\times 2^{e_{3}-e_{2}} = 3\times 4$ matrix $T^{X}$ therefore has constant row entries $\frac{1}{4}$ 
 or $\frac{1}{2}$, as shown at left in Table 1c, while a possible rounding $F(3) = F^{X}$ by these theorems is shown at right.

\subsection{An Integer Labeling of Hypercubes}\label{cats}
In this subsection we construct an ordering $L_{t}$ of the vertices of $Q_{t}$ such that for any interval of at most $O(log(t))$ consecutive points under this ordering, 
any  two points $x,y$ in that interval satisfy $dist_{Q_{t}}(x,y)\le 3$.    
 This labeling is based on the existence of certain spanning 
subgraphs of hypercubes of suitable dimension, as follows.  Define a \textit{cyclic caterpillar} as a connected graph $H$ such that removal of all leaves 
of $H$ results in a cycle graph $C_{e}$ for some $e\geq 3$.  A cyclic caterpillar is \textit{r-regular} if each vertex of its cycle 
subgraph $C_{e}$ has exactly $r$ neighboring leaves not on the cycle.  Denote such an $r$-regular cyclic caterpillar by $Cat(e,r)$.

We are interested in finding spanning subgraphs $Cat(e,r)$ of hypercubes.  Clearly if $Cat(e,r)$ spans $Q_{n}$, then $e = \frac{2^{n}}{r+1}$, so 
$r+1$ must be a power of $2$.  Papers on this subject include \cite{CK}, \cite{DHLM}, and \cite{HLW}, while \cite{B} examines the general question of finding short dominating cycles or paths in the hypercube.  
We use following result.

\begin{theorem}\label{spanning cyclic caterpillars} [Corollary 5.8 in \cite{CK}] There exists a spanning cyclic regular caterpillar $Cat(e,2r+1)$ of $Q_n$ 
provided that $r+1 = 2^{i}$ and $n = 2^{i+1}+2i$ for some integer $i\geq 0$. 

\end{theorem}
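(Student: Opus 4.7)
\noindent My plan to prove Theorem \ref{spanning cyclic caterpillars} is to combine a perfect Hamming-code star tiling with a Gray-code-style routing. Write $n = n_1 + n_2$ where $n_1 = 2^{i+1}-1$ and $n_2 = 2i+1$ (so $n_1 + n_2 = 2^{i+1} + 2i = n$), and identify $Q_n \cong Q_{n_1} \times Q_{n_2}$. Since $n_1 + 1 = 2^{i+1} = 2r+2$, a perfect Hamming $1$-code $H \subset \mathbb{F}_2^{n_1}$ of size $|H| = 2^{n_1-i-1}$ exists; the balls of radius $1$ around $H$ partition $\mathbb{F}_2^{n_1}$ into $|H|$ stars of type $K_{1,n_1} = K_{1,2r+1}$.

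The first step is to tile $V(Q_n)$ into $e = 2^{n-i-1}$ stars. In each layer $\mathbb{F}_2^{n_1} \times \{x\}$ I would take a translate $H_x = H + \tau(x)$ of $H$ as the set of star centers, where $\tau \colon V(Q_{n_2}) \to \mathbb{F}_2^{n_1}$ is a shift function to be designed. The star centered at $(h, x)$ (with $h \in H_x$) has as leaves the $n_1 = 2r+1$ vertices $(h \oplus e_j, x)$ for $1 \le j \le n_1$. Because each $H_x$ is still a perfect $1$-code, each layer is partitioned into such stars, and the total number of stars is $|H| \cdot 2^{n_2} = 2^{n-i-1} = e$, as required.

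The main obstacle is organizing the $e$ star centers $\{(h,x) : h \in H_x,\, x \in V(Q_{n_2})\}$ into a single Hamiltonian cycle in the subgraph of $Q_n$ they induce. Because any two distinct elements of any $H_x$ lie at Hamming distance at least $3$, every cycle edge must change only the $Q_{n_2}$-coordinate, forcing consecutive centers $(h, x)$ and $(h, x \oplus e_k)$ to share the first coordinate and hence to satisfy $\tau(x) - \tau(x \oplus e_k) \in H$. Under this constraint, a naive constant $\tau$ produces $|H|$ disjoint cycles rather than one, so $\tau$ must be chosen more carefully, very likely by allowing a small fraction of leaves to lie in $Q_{n_2}$-directions and using the freed $Q_{n_2}$-edges as cross-coset bridges between Hamming-coset components. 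The dimension $n_2 = 2i+1$ appears calibrated to leave exactly the right room to perform this bridging recursively, and I would expect the proof to go through by induction on $i$, using the caterpillar for parameter $i-1$ together with automorphisms of the Hamming code to organize the cross-coset transitions. Verifying that the gluing can be done consistently, with each center visited exactly once and the leaf partition preserved, is the heart of the argument.
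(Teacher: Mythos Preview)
The paper does not prove this statement; it is quoted as Corollary~5.8 of Caha--Koubek and used as a black box, so there is no in-paper argument to compare your proposal against.

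On its own merits, your setup is correct: $n_1 = 2^{i+1}-1$ admits a perfect Hamming code, radius-$1$ balls give a $K_{1,2r+1}$ partition of each $Q_{n_1}$-layer, and the star count $e = 2^{n-i-1}$ is right. The gap is exactly where you locate it, but it is more serious than your write-up suggests. With all leaves taken in $Q_{n_1}$-directions, you correctly observe that any edge of $Q_n$ joining two centers must change only the $Q_{n_2}$-coordinate; but then the \emph{first} coordinate is constant along every edge of the center graph, so that graph splits into fibres $\{h\}\times(\cdot)$ and has at least $|H|=2^{n_1-i-1}\ge 2$ components for every $i\ge 1$. No choice of the shift $\tau$ repairs this: varying $\tau$ only changes which values of $h$ occur as centers, it can never produce a cycle edge that changes $h$. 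So the pure Hamming-star scheme provably admits no Hamiltonian cycle on its centers, for any $\tau$.

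Your proposed fix---diverting some leaves to $Q_{n_2}$-directions to free up $Q_{n_1}$-edges as cross-fibre bridges---is therefore not a refinement but the entire content of the theorem. Once a center $(h,x)$ takes $(h,x\oplus e_k)$ as a leaf, that vertex is removed from the pool of possible centers, forcing changes in the star decomposition of the neighbouring layer; and the dropped vertex $(h\oplus e_j,x)$ must either become a center itself (giving the desired bridge) or be absorbed as a leaf of some other star, which forces further rearrangement. Carrying this out globally so that the stars still partition $V(Q_n)$, every star still has exactly $2r+1$ leaves, and the centers form a single cycle, is a substantial construction that your sketch does not supply. As written, the proposal is a reasonable plan whose central lemma is missing.
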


From this we easily obtain the following.

\begin{corollary}\label{containment forever} Suppose $r+1 = 2^{i}$ and $t\geq 2^{i+1} + 2i$ for an integer $i\geq 0$.  Then $Q_{t}$ 
contains a spanning cyclic $(2r+1)$-regular caterpillar $Cat(e,2r+1)$ for suitable $e$.
    
\end{corollary}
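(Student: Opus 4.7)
My plan is to prove the statement by induction on $t$, with base case $t = 2^{i+1}+2i$ supplied directly by Theorem \ref{spanning cyclic caterpillars}. For the inductive step, I assume that $Q_n$ admits a spanning $Cat(e, 2r+1)$ with $e = 2^{n}/(2r+2) = 2^{n-i-1}$, and I build a spanning $Cat(2e, 2r+1)$ of $Q_{n+1}$.

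The construction would go as follows. View $Q_{n+1}$ as two copies $Q_n^0$ and $Q_n^1$ of $Q_n$ joined by a perfect matching. Let $C^0 = v_1 v_2 \cdots v_e v_1$ be the spine cycle of the given caterpillar in $Q_n^0$, and let $C^1 = v_1' v_2' \cdots v_e' v_1'$ be its mirror image in $Q_n^1$ (so $v_j'$ is the matching partner of $v_j$). Form a new spine $C'$ of length $2e$ by deleting the edges $v_1 v_e$ from $C^0$ and $v_1' v_e'$ from $C^1$ and inserting the two matching edges $v_1 v_1'$ and $v_e v_e'$; the traversal $v_1, v_2, \ldots, v_e, v_e', v_{e-1}', \ldots, v_1', v_1$ shows that this produces a single cycle of length $2e = 2^{n+1}/(2r+2)$.

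To promote $C'$ to a cyclic caterpillar, retain as leaves of each $v_j \in C^0$ the $2r+1$ leaves it had in the original $Cat(e, 2r+1)$ (all of which live in $V(Q_n^0)\setminus V(C^0)$), and analogously for each $v_j' \in C^1$ inside $Q_n^1$. These leaf sets are disjoint from the new spine $V(C') = V(C^0) \cup V(C^1)$, pairwise disjoint across distinct spine vertices (because they were so in each original caterpillar), and together with $V(C')$ they exhaust $V(Q_{n+1})$. Therefore every spine vertex of $C'$ has exactly $2r+1$ leaves, giving the desired spanning $Cat(2e, 2r+1)$ of $Q_{n+1}$.

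There is no serious obstacle here; the only points requiring care are that the two crossing edges used for splicing are genuine edges of $Q_{n+1}$ (immediate from the $Q_n$-plus-matching structure) and that cutting one edge in each of $C^0$ and $C^1$ and pasting in two crossing edges really merges them into a single cycle rather than leaving two components—both confirmed by the explicit traversal above. Iterating this lift starting from $t = 2^{i+1}+2i$ yields a spanning $Cat(e, 2r+1)$ of $Q_t$ for every $t \geq 2^{i+1}+2i$, completing the proof.
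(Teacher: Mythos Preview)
Your proof is correct and follows essentially the same approach as the paper's: induction on $t$ with base case supplied by Theorem~\ref{spanning cyclic caterpillars}, and an inductive step that lifts a spanning $Cat(e,2r+1)$ in $Q_n$ to a spanning $Cat(2e,2r+1)$ in $Q_{n+1}$ via the decomposition $Q_{n+1}=Q_n\times K_2$. The paper simply asserts this lift in one sentence, whereas you spell out the explicit cycle-splicing and leaf-retention that realize it; your added detail is sound and exactly what the paper's terse argument is abbreviating.
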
  

\begin{proof} Recall that $Q_{t+1}$ is the cartesian product $Q_{t+1} = Q_{t}\times K_{2}$.  Hence if $Q_{t}$ contains a spanning subgraph $Cat(e,2r+1)$, then 
$Q_{t+1}$ contains a spanning subgraph $Cat(2e,2r+1)$.  The corollary follows by induction on $t$. 
\end{proof}

We can now construct our desired labeling of hypercubes of sufficiently large dimension. 

\begin{corollary}\label{labeling} 
\noindent{\rm{(a)}} Let $r,i,t$ be positive integers satisfying $r+1 = 2^{i}$ and $t\geq 2^{i+1} + 2i$.  Then there exists 
a one to one integer labeling 
$L_{t}: V(Q_{t})\rightarrow \{1,2,\ldots,2^{t} \}$  such that for any $x,y\in V(Q_{t})$ we have  
$|L_{t}(x) - L_{t}(y)|\le 2r+3 \Rightarrow dist_{Q_{t}}(x,y)\le 3$, where the indicated difference is taken modulo $2^t$.

\noindent{\rm{(b)}} Let  $t\geq 22$.  Then there exists 
a one to one integer labeling 
$L_{t}: V(Q_{t})\rightarrow \{1,2,\ldots,2^{t} \}$  such that for any $x,y\in V(Q_{t})$ we have  
$|L_{t}(x) - L_{t}(y)|\le 17 \Rightarrow dist_{Q_{t}}(x,y)\le 3$, where the indicated difference is taken modulo $2^t$.

\end{corollary}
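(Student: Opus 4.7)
My plan is to obtain the labeling $L_t$ directly from a spanning cyclic caterpillar of $Q_t$ provided by Corollary \ref{containment forever}, and then to check the distance bound by a simple block argument. Fix $i$ with $r+1=2^i$ and $t\geq 2^{i+1}+2i$. By Corollary \ref{containment forever}, $Q_t$ contains a spanning $Cat(e,2r+1)$, whose cycle $C_e$ has $e=2^t/(2r+2)$ vertices, each adjacent to $2r+1$ private leaves. Let $v_1,v_2,\ldots,v_e$ be a cyclic traversal of this cycle, and for each $j$ let the \emph{block} $B_j$ consist of $v_j$ followed by its $2r+1$ leaves, listed in any fixed order. Concatenating $B_1,B_2,\ldots,B_e$ gives a listing of all $2^t$ vertices of $Q_t$; define $L_t$ by assigning the labels $1,2,\ldots,2^t$ in this order. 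Every block has size $2r+2$.

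To prove part (a), suppose $x,y\in V(Q_t)$ satisfy $|L_t(x)-L_t(y)|\leq 2r+3$ modulo $2^t$. Since each block has exactly $2r+2$ labels and the blocks are arranged cyclically (matching the cycle structure of $Cat(e,2r+1)$ and the modular interpretation of the label difference), $x$ and $y$ lie either in a common block $B_j$ or in two cyclically consecutive blocks $B_j,B_{j+1}$. In the first case, each of $x,y$ is either $v_j$ or a leaf adjacent to $v_j$, so $\mathrm{dist}_{Q_t}(x,y)\leq 2$ via $v_j$. In the second case, $v_jv_{j+1}$ is an edge of $Q_t$ because $v_j,v_{j+1}$ are cycle-adjacent in the spanning caterpillar; since $x$ is at distance at most $1$ from $v_j$ and $y$ is at distance at most $1$ from $v_{j+1}$, the triangle inequality gives $\mathrm{dist}_{Q_t}(x,y)\leq 1+1+1=3$, as required.

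Part (b) follows from part (a) by specializing the parameters. Choose $i=3$, so $r+1=2^3=8$, $r=7$, and the threshold $2^{i+1}+2i=16+6=22$ matches the hypothesis $t\geq 22$. Then $2r+3=17$, and part (a) yields a labeling $L_t$ with the desired implication.

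I do not anticipate a genuinely hard step: the work is carried by Corollary \ref{containment forever} (the existence of the spanning cyclic caterpillar) and by the block-concatenation labeling, which is tailored so that blocks correspond to cycle vertices and consecutive blocks correspond to cycle edges of $Cat(e,2r+1)$. The only point requiring a little care is the cyclic (mod $2^t$) interpretation of the label difference, which is precisely why we use a \emph{cyclic} caterpillar rather than a path caterpillar; this ensures the ``two consecutive blocks'' case still furnishes a $Q_t$-edge $v_jv_{j+1}$ when the labels wrap around the endpoints.
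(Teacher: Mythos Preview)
Your approach is exactly that of the paper: use the spanning $Cat(e,2r+1)$ from Corollary \ref{containment forever}, list the cycle vertices with their leaves in blocks of size $2r+2$, and read off the labeling. The paper simply declares the verification ``straightforward,'' so your case analysis is already more detailed than theirs, and part (b) is handled identically by specializing $i=3$.

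There is one small slip in your block argument. You claim that if the (cyclic) label difference is at most $2r+3$ then $x$ and $y$ lie in the same block or in two consecutive blocks. This is not quite true: if $x$ is the last vertex of $B_j$ (a leaf of $v_j$, with label $(2r+2)j$) and $y$ is the first vertex of $B_{j+2}$ (namely $v_{j+2}$, with label $(2r+2)(j+1)+1$), then the label difference is exactly $2r+3$ while $x$ and $y$ sit in blocks two apart. Fortunately this single boundary case still gives $\mathrm{dist}_{Q_t}(x,y)\le 3$ via $x\to v_j\to v_{j+1}\to v_{j+2}=y$, and one checks that any larger block separation forces a label difference of at least $2r+4$. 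So your conclusion stands; you just need to add this third case to make the argument airtight.
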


\begin{proof} For (a), consider the spanning subgraph $Cat(e,2r+1)$ of $Q_{t}$, $t\geq 2^{i+1} + 2i$, from Corollary \ref{containment forever}.  For 
$1\le i\le e$, let $x_i$ be the vertices of the cycle $C_{e}$ in $Cat(e,2r+1)$ indexed consecutively around this cycle.  Also let 
$x_{i,1}, x_{i,2},\ldots, x_{i,2r+1}$ be the leaf neighbors of $x_i$.  Now define $L_{t}$ by letting $L_{t}(x_i) =(2r+2)i$ 
for $1\le i\le e$, and $L_{t}(x_{i,j}) = (2r+2)(i-1) + j$ for fixed $i$ and $1\le j\le 2r+1$.  For the claim, the critical case to check is when 
$x$ (or $y$) = $x_{i,2r+1}$ for any $1\le i\le e$.  The $2r+3$ points which follow this $x$ in this ordering are (in order) 
$x_{i}$, $\{ x_{i+1,j}: 1\le j\le 2r+1 \}$, and $x_{i+1}$, all at distance $3$ from $x$.  Further $2r+3$ is best possible since the $(2r+4)$'th point 
following $x$ is $x_{i+2,1}$, and $dist_{Q_{t}}(x,x_{i+2,1}) = 4$.

For (b), simply apply part (a) with $r=7$, yielding $i = 3$, and $t\geq 22$.
\end{proof} 

We will apply the labeling $L_{t}$ when $t = e_{i} - e_{i-1}$, for each $1\le i\le k$.  So to apply Corollary \ref{labeling}b we need the condition  
$t = e_{i} - e_{i-1}\geq 22$.  We therefore assume $a_{i}\geq 2^{22}$, $1\le i\le k$, from now on, which ensures that this condition holds.

  \section{The general construction} \label{high dimensional}
  
  In this section we inductively construct a series of embeddings  $f_{i}: G\rightarrow [\langle Y_{i-1} \rangle \times P(u_{i})] = Y_{i}^{(u_{i})}\subset Y_{i}$, $3\le i\le k$, where 
  $u_{i}= \lceil \frac{|G|}{|\langle Y_{i-1} \rangle|} \rceil = \lceil \frac{|G|}{2^{e_{i-1}}} \rceil$.  At the end we relabel the points of $\langle Y_{k-1} \rangle \times P(u_{k})$ with 
  hypercube addresses coming from $Opt(G)$, using the inverse of the labeling of Corollary \ref{labeling}.  The composition of this relabeling with the map $f_{k}$ yields 
  the final embedding $H^{k}: G\rightarrow Opt(G)$.    We follow the general plan outlined in section \ref{idea and example}.

Recall $S_{i}(G)$, the set of points of $S_{i}^{(P_{i})}$ lying in nonblank columns of $S_{i}^{(P_{i})}$.  We let $S_{i}^{r}(G) = S_{i}(G)\cap S_{i}^{r}$, the set of points lying in nonblank columns of $S_{i}^{r}$, and 
$S_{i}^{(r)}(G) = \bigcup_{j=1}^{r}S_{i}^{j}(G)$.  Recall we stipulated that $(I_{i}\circ f_{i})(G)\subseteq S_{i}(G)$.  For $r\le P_{i}$ we let 
$S_{i}^{r}(G)' = S_{i}^{r}(G)\cap (I_{i}\circ f_{i})(G)$ and $S_{i}^{(r)}(G)' = S_{i}^{(r)}(G)\cap (I_{i}\circ f_{i})(G)$.  Thus $S_{i}^{(P_{i})}(G)'$ is the domain of the stacking map $\sigma_{i}$, 
and $S_{i}^{r}(G)'$ (resp. $S_{i}^{(r)}(G)' $) is the subset of that domain lying in section $S_{i}^{r}$ (resp. $S_{i}^{(r)}$).  We will see later that $S_{i}^{(P_{i}-1)}(G) = S_{i}^{(P_{i}-1)}(G)'$.

\subsection{The three dimensional embedding}

In this subsection we construct the map $f_{3}: G\rightarrow [\langle Y_{2} \rangle \times P(u_{3})]\subset Y_{3}$ for a $3$-dimensional grid $G = [a_{1}\times a_{2} \times a_{3}]$.  Since 
$f_{3}(G)$ is a $3$-dimensional grid, we can visualize its construction along lines of Section \ref{idea and example} using Figures \ref{1thru12}, \ref{stack4sections}, and \ref{7and12stacking}, to which we refer the reader 
toward following the construction which follows.  This $3$-dimensional case will hopefully help in understanding the generalization 
to higher dimensions in the next subsection.

\medskip

\begin{center} \textbf{Construction of the map $f_{3}: [a_{1}\times a_{2}\times a_{3} \times \cdots \times a_{k} ]\rightarrow Y_{3}$}
\end{center}

\noindent{\textbf{1.}} Set $G =  [a_{1}\times a_{2}\times a_{3} \times \cdots \times a_{k} ]$.  Construct the embedding $f_{2}: G\rightarrow Y_{2}^{(u_{2})}$ 
of section \ref{the 2D mapping}.    
\smallskip

\noindent{\textbf{2.}} [\textbf{Designation of Blank Columns}] 
 
\noindent \textbf{a)} Set $P_{2} = \Pi_{i=3}^{k}a_{i}$.  

\noindent{\textbf{b)}} Construct the matrix $F(2) = (f_{ij}(2))$ by the procedure following Corollary \ref{zeros column index} for the case $i=2$. 
(Comment: The matrix entries $f_{ij}(2)$ now satisfy (\ref{overv row sums})-(\ref{overv initial row sums}) 
by Corollary \ref{zeros column index}d.)   
\newline \textbf{c)} Now designate `blank' columns in the subgraph $S_{2}^{(P_{2})}$ of $Y_{2}$ as follows. 
\begin{center}  Column $j$ of $S_{2}^{i}$ is `blank' if $f_{ij}(2) =1$, and is `nonblank' if $f_{ij}(2) = 0.$
\end{center}
(Comment: Recall that column $j$ of $S_{2}^{i}$ is column $(i-1)2^{e_{2}-e_{1}}+j$ of $Y_{2}$.)
\newline \textbf{d)} For $1\le r\le P_{2}$, let $S_{2}^{r}(G)$ (resp. $S_{2}^{(r)}(G)$) be the set points lying in nonblank columns of $S_{2}^{r}$ (resp. $S_{2}^{(r)}$). 
\smallskip

\noindent{\textbf{3.}}  [\textbf{The Inflation Step}]   

\noindent \textbf{a)} ``Inflate" the image $f_{2}(G)$ by the map $I_{2}: Y_{2}^{(u_{2}(G))}\rightarrow S_{2}^{(P_{2})}(G)$ given as follows.  For any 
$(a,b)\in Y_{2}^{(u_{2}(G))}$, let 
$$I_{2}(a,b) = (a,b'),$$ 
where $Y_{2}^{b'}$ is the $b$'th nonblank column in $S_{2}^{(P_{2})}$ in increasing order of $b$.
\newline  \textbf{b)} Let $S_{2}^{r}(G)' = S_{2}^{r}(G)\cap (I_{2}\circ f_{2})(G)$ and $S_{2}^{(r)}(G)' = S_{2}^{(r)}(G)\cap (I_{2}\circ f_{2})(G)$.

\smallskip 

\noindent {\textbf{4.}}  [\textbf{The Stacking Step}]  

\noindent``Stack" the sets $S_{2}^{r}(G)'$, $1\le r\le P_{2}$, successively ``over"  $S_{2}^{1}\cong \langle Y_{2} \rangle$ by the 
stacking map $\sigma_{2}: S_{2}^{(P_{2})}(G)'\rightarrow S_{2}^{1}\times P(u_{3}) = Y_{3}^{(u_{3})}$ defined as follows.  Take $y = (a,b)\in  S_{2}^{(P_{2})}(G)'$, say with $y\in S_{2}^{r}(G)'$.

\noindent  \textbf{a)} Let $n_{y} = |\{ z\in S_{2}^{(r)}(G)': z_{1}= a$ and $z_{2} \equiv b$ (mod $2^{e_{2}-e_{1}}$)$\}|$. 

\noindent  \textbf{b)} Let $\bar{b}$ satisfy $b\equiv \bar{b}$ (mod $2^{e_{2}-e_{1}}$), 
with $1\le \bar{b}\le 2^{e_{2}-e_{1}}.$  

\noindent \textbf{c)} Define $\sigma_{2}(a,b) = (a,\bar{b},n_{y}).$

\noindent (Comment: Take any $(a,b)\in S_{2}(G)'$.  Then $1\le \sigma_{2}(a,b)_{2} = \bar{b}\le 2^{e_{2}-e_{1}}$, and $1\le \sigma_{2}(a,b)_{1} = a\le 2^{e_{1}}$.  Thus 
$\sigma_{2}(a,b)_{1\rightarrow 2}\in S_{2}^{1}$ so we can view 
$\sigma_{2}$ as ``stacking" the sets $S_{2}^{r}(G)', 1\le r\le P_{2}$, in succession by increasing $r$ ``over" $S_{2}^{1} \cong \langle Y_{2} \rangle $.  
So we have $(\sigma_{2}\circ I_{2}\circ f_{2})(G)\subseteq  \langle Y_{2} \rangle \times P(u)\subset Y_{3}$,  
where $u$ is the maximum of $n_{y}$ over all $y$ belonging to the last set $S_{2}^{P_{2}}(G)'$.  We will prove later that $u = u_{3}(G)$.)  

\smallskip
\noindent{\textbf{5.}} [\textbf{The Composition Step}] 
\newline Finally, define $f_{3}: G\rightarrow Y_{3}$ as the composition $f_{3} = \sigma_{2}\circ I_{2}\circ f_{2}$.

\subsection{Embeddings of grids of higher dimension }\label{higher construction}

Again set $G = [ a_{1}\times \cdots \times a_{k}]$.  In this subsection we inductively construct embeddings  
$f_{i}: G \rightarrow Y_{i}^{(u_{i})}$ for $3\le j\le k$.  Assume then that 
we have constructed the required maps $f_{2},\ldots, f_{i}$, $3\le i < k$, and we construct $f_{i+1}: G \rightarrow Y_{i+1}^{(u_{i+1})}$.

Starting from $f_{i}(G)\subset Y_{i}^{(u_{i})}$,  we will use direct analogues $I_i$ and $\sigma_i$ of the inflation map $I_2$ and 
the stacking map $\sigma_2$ used in constructing $f_3$ from $f_2$.  In particular, $I_i$ will inflate $f_{i}(G)$ by introducing blank 
$(i-1)$-levels $Y_{i}^{j}$ of $Y_{i}$ using a matrix $F(i) = F^{X}$  (constructed by the procedure following Corollary \ref{zeros column index})  that encodes which 
$(i-1)$-levels of $Y_{i}$ will be introduced as blank.  Then a stacking map 
$\sigma_i: S_{i}^{(P_{i})}(G)'\rightarrow S_{i}^{1}\times P(u_{i+1}) = Y_{i+1}^{(u_{i+1})}$ stacks the sets $S_{i}^{r}(G)'$, $1\le r\le P_{i}$, in succession by increasing $r$ over 
$S_{i}^{1}\cong \langle Y_{i} \rangle$ to yield the final image $f_{i+1}(G)$.  Thus we may write $f_{i+1}$ as the composition $f_{i+1} = \sigma_{i} \circ I_{i} \circ f_{i}$, 
and inductively $f_{i+1} = \sigma_{i} \circ I_{i} \circ \sigma_{i-1} \circ I_{i-1} \circ \cdots \circ \sigma_{2} \circ I_{2} \circ f_{2}$.

\begin{center} \textbf{Construction of the map $f_{i+1}: [a_{1}\times a_{2}\times a_{3} \times \cdots \times a_{k} ]\rightarrow Y_{i+1}^{(u_{i+1})}, ~2\le i< k$}
\end{center} 

\noindent{\textbf{1.}} Set $G = [a_{1}\times a_{2}\times a_{3} \times \cdots \times a_{k} ]$. Assume inductively that the map $f_{i}: [a_{1}\times a_{2}\times a_{3} \times \cdots \times a_{k} ]\rightarrow Y_{i}^{(u_{i})}$ has been 
constructed.  We now operate on $f_{i}(G)$ to obtain our image $f_{i+1}(G)\subset Y_{i+1}^{(u_{i+1})}$. 

\smallskip

\noindent{\textbf{2.}} [\textbf{Designation of Blank $(i-1)$-Levels}] 
\newline \textbf{a)} Set $P_{i} = \Pi_{t=i+1}^{k}a_{t}$.
\newline \textbf{b)}  Construct the $P_{i}\times 2^{e_{i}-e_{i-1}}$, $(0,1)$-matrix $F(i) = (f_{cd}(i))$ by the procedure following Corollary \ref{zeros column index}. 
\newline [Comment: Thus by Corollary \ref{zeros column index}d, the matrix entries $f_{cd}(i)$ satisfy relations (\ref{overv row sums general})-(\ref{overv initial row sums general}).]

\noindent \textbf{c)} For $1\le c\le P_{i}$, define `` level $j$ of $S_{i}^{c}$ '', or the `` $j$'th level of $S_{i}^{c}$ '', to be the $(i-1)$-level $Y_{i}^{(c-1)2^{e_{t}-e_{t-1}}+j}\subset Y_{i}$.  Designate 
level $j$ of $S_{i}^{c}$ as being either `blank'  or `nonblank' as follows. 
\begin{center}  Level $j$ of $S_{i}^{c}$ is `blank' if $f_{cj}(i) =1$, and is `nonblank' if $f_{cj}(i) = 0$.
  
\end{center} 

\noindent \textbf{d)} Linearly order all the nonblank $(i-1)$-levels in $S_{i}^{(P_{i})}$ by increasing $i$'th coordinate; that is, for any two nonblank levels $(i-1)$-levels $Y_{i}^{t}$ 
and $Y_{i}^{t'}$ we have $Y_{i}^{t} < Y_{i}^{t'}$ if and only if $t < t'$.  

\noindent \textbf{e)} Let $S_{i}^{c}(G)$ (resp. $S_{i}^{(c)}(G)$) be the set points of $S_{i}^{(c)}$ lying in nonblank $(i-1)$-levels of $S_{i}^{c}$ (resp. $S_{i}^{(c)}$).

\smallskip

\noindent{\textbf{3.}}  [\textbf{The Inflation Step}]  
\newline \textbf{a)} ``Inflate" the image $f_{i}(G)$ by the map $I_{i}: Y_{i}^{(u_{i}(G))}\rightarrow S_{i}^{(P_{i})}(G)$ as follows.  
Take any $z = (z_{1},z_{2},\ldots,z_{i})\in Y_{i}^{(u_{i}(G))}$. Then let 
$$I_{i}(z) = (z_{1},z_{2},\ldots,z_{i-1}, z_{i}'),$$ 
where $Y_{i}^{z_{i}'}$ is the $z_{i}$'th nonblank $(i-1)$-level in $S_{i}^{(P_{i})}$ (in the ordering of nonblank $(i-1)$-levels in $S_{i}^{(P_{i})}$ 
from step 2(d)).

\noindent [Comment: Let $b$ and $r$ be the integers such that $Y_{i}^{z_{i}'}$ is the $b$'th nonblank $(i-1)$-level of $S_{i}^{r}(G)$.  
Recall that $N_{r}(b)$ is the column index 
in matrix $F(i)$ of the $b$'th zero in row $r$ of $F(i)$.  Then by step $2$(c), we have $z_{i}' = (r-1)2^{e_{i}-e_{i-1}} + N_{r}(b)$,
$1\le N_{r}(b)\le 2^{e_{i}-e_{i-1}}$, so 
 $$I_{i}(z) = (z_{1},z_{2},\ldots,z_{i-1}, (r-1)2^{e_{t}-e_{t-1}} + N_{r}(b)).]$$ 

\noindent \textbf{b)} Let $S_{i}^{r}(G)' = S_{i}^{r}(G)\cap (I_{i}\circ f_{i})(G)$ and $S_{i}^{(r)}(G)' = S_{i}^{(r)}(G)\cap (I_{i}\circ f_{i})(G)$.

\smallskip

\noindent{\textbf{4.}}  [\textbf{The Stacking Step}]  
\newline ``Stack" the sets $S_{i}^{r}(G)'$, $1\le r\le P_{i}$, from step $3$ (The Inflation Step) on top of each other ``over"  $S_{i}^{1}$ in succession as $r$ increases. We do this by the  
stacking map $\sigma_{i}: S_{i}^{(P_{i})}(G)'\rightarrow S_{i}^{1}\times P(u_{i+1})\subseteq Y_{i+1}^{(u_{i+1})}$ defined as follows.  Let $y = (y_{1},y_{2},\ldots,y_{i-1}, y_{i})\in S_{i}^{(P_{i})}(G)'$, 
say with $y\in S_{i}^{r}(G)'$, $1\le r\le P_{i}$.    
\newline \textbf{a)} Let $n_{y} = |\{ z\in S_{t}^{(r)}(G)': z_{1\rightarrow i-1}= y_{1\rightarrow i-1}$ and $z_{i} \equiv y_{i}$ (mod $2^{e_{i}-e_{i-1}}$)$\}|$.  
\newline \textbf{b)} Let $\bar{y}_{i}$ be such that 
$\bar{y}_{i} \equiv y_{i}$ (mod $2^{e_{i}-e_{i-1}})$, with $1\le \bar{y}_{i} \le 2^{e_{i}-e_{i-1}}.$  So $\bar{y}_{i} = N_{r}(b)$ as in the comment to step $3$ (The Inflation Step).  
\newline  \textbf{c)} Define $\sigma_{i}(y) = (y_{1}, y_{2}, \ldots, y_{i-1}, \bar{y}_{i},n_{y}) = (y_{1}, y_{2}, \ldots , y_{i-1}, N_{r}(b) ,n_{y}).$
\newline [Comment: Since $\sigma_{i}(y)_{1\rightarrow i} = (y_{1}, y_{2}, \ldots, y_{i-1}, \bar{y}_{i})\in S_{i}^{1}$, we see that  $\sigma_{i}(S_{i}^{(P_{i})}(G)')\subset S_{i}^{1}\times P(m)$, 
where $m$ is the maximum of $n_{y}$ over all $y$ belonging to the last set $S_{i}^{P_{i}}(G)'$.  We will see later that $m = u_{i+1}(G)$. ]

\smallskip

\noindent{\textbf{5.}} [\textbf{The Composition Step}]  
\newline Finally define $f_{i+1}: G\rightarrow Y_{i+1}^{(u_{i+1})}$ as the composition $f_{i+1} = \sigma_{i}\circ I_{i}\circ f_{i}$.  In particular, for any $z = f_{i}(v)\in f_{i}(G)$ we have 
$f_{i+1}(v) =  \sigma_{i}(I_{i}(z))$. 

\bigskip

Consider now the construction of $f_{i}$ from $f_{i-1}$ by the above construction.  The stacking step 4 suggests that we can regard $f_{i}(G) \subseteq S_{i-1}^{1}\times P(m)$ 
(as in the comment to step 4) as a collection 
of stacks addressed by the points of $S_{i-1}^{1}\cong \langle Y_{i-1} \rangle$. The height of each such stack extends into the 
$i$'th dimension of $Y_{i}$.  Specifically, for any $x\in \langle Y_{i-1} \rangle $, we let 
$Stack_{i}(x,r) = \{  \sigma_{i-1}(y): (\sigma_{i-1}(y))_{1\rightarrow i-1}= x$ and $y\in S_{i-1}^{(r)}(G)'  \}$, the stack addressed by $x$, for a given integer $r\le P_{i-1}$.  So $Stack_{i}(x,r)$ consists of 
 images $\sigma_{i-1}(y)$ which project onto $x$ in their first $i-1$ coordinates, and such that $y$ comes
from the first $r$ many sets $S_{i-1}^{j}(G)'$, $1\le j\le r$.  These sets $S_{i-1}^{j}(G)'$, $1\le j\le r$, are stacked on top of $S_{i-1}^{1}\cong \langle Y_{i-1} \rangle$ in order of increasing $j$.  
We view the ``height" of $\sigma_{i-1}(y)$ in 
$Stack_{i}(x,r)$ as its $i$'th coordinate $\sigma_{i-1}(y)_{i} = n_{y}$, and the height of $Stack_{i}(x,r)$ as $|Stack_{i}(x,r)|$.  
Now define $[r]_{i} = $max$\{|Stack_{i}(x,r)|: x\in \langle Y_{i-1} \rangle  \}$, the maximum  
height of any of these stacks addressed by points of $\langle Y_{i-1} \rangle$.  Thus by definition $\sigma_{i-1}(S_{i-1}^{(r)}(G)')$ is contained in the first $[r]_{i}$ many 
$(i-1)$-levels of $Y_{i}$; that is, $\sigma_{i-1}(S_{i-1}^{(r)}(G)')\subset Y_{i}^{([r]_{i})}$. 
As a convenience, for a stack address 
$x\in S_{i-1}^{1}\cong \langle Y_{i-1} \rangle$, we will refer to $x$ either as a member of $S_{i-1}^{1}$ or of $\langle Y_{i-1} \rangle$, in most cases of $\langle Y_{i-1} \rangle$.

The parameter $[r]_{i}$ plays a role in our containment result.  We will see in Lemma \ref{whatever}f that $I_{i-1}(f_{i-1}(D_{i-1}^{(r)}))\subseteq S_{i-1}^{(r)}(G)$, as 
suggested in section \ref{idea and example}.  Thus $[r]_{i}\geq   \lceil \frac{|D_{i-1}^{(r)}|}{ |\langle Y_{i-1} \rangle| }  \rceil  = \lceil \frac{|D_{i-1}^{(r)}|}{2^{e_{i-1}}}  \rceil $. 
We will also see ( Lemma \ref{whatever}e1 ) that in fact equality holds in the first inequality for each relevant $r$ and $i$; that is, $[r]_{i}$ is as small as it could possibly be.  
Thus taking $r = P_{i-1}$ and 
recalling that $D_{i-1}^{(P_{i-1})} = G$, we obtain $f_{i}(G) \subseteq Y_{i}^{(u_{i}(G))}\subseteq Opt(G)$ for each $i\geq 2$, our containment result.

The stacking step 4 of the construction will yield the following monotonicity properties of $Stack_{i}(x,r)$.  First, if $w',w''\in Stack_{i}(x,r)$ with $\sigma_{i-1}^{-1}(w')\in S_{i-1}^{c}$ 
and $\sigma_{i-1}^{-1}(w'')\in S_{i-1}^{d}$, $1\le c,d\le r$, then 
$w'_{i} > w''_{i}$ implies that $c > d$. That is, the originating $(i-1)$-section number under $\sigma_{i-1}^{-1}$ is strictly increasing as we move up any fixed stack.  For the 
second property, let  $Stack_{i}'(x,r) = f_{i}(D_{i-1}^{(r)})\cap Stack_{i}(x,r)$.  We will see that $Stack_{i}'(x,r)$ is an initial substack of $Stack_{i}(x,r)$.  Specifically, 
if $w',w''\in Stack_{i}'(x,r)$ with $f_{i}^{-1}(w')\in D_{i-1}^{c}$ and $f_{i}^{-1}(w'')\in D_{i-1}^{d}$, then 
$w'_{i} > w''_{i}$ implies that $c \geq d$.  Thus the originating $(i-1)$-page number under $f_{i}^{-1}$ is nondecreasing as we move up any fixed stack.  The first monotonicity property (which we call 
{\it stack monotonicity}) is immediate 
from our construction, and will  
be noted for the record in the Lemma which follows (part g).  The second monotonicity property (which we call 
{\it page monotonicity}) will be proved later in Theorem \ref{missing points}(a).

\begin{lemma}\label{whatever} 

\noindent{\rm(a)} For $x\in V(G)$. For $2\le j\le k$ the maps $f_{j}$ satisfy the following.

(a1) For $1\le i\le k-1$ and $i+1\le j\le k$ we have $f_{j}(x)_{1\rightarrow i} = f_{i+1}(x)_{1\rightarrow i}$.   

(a2) In particular, suppose 
$2\le i\le k-1$ and take $z = f_{i}(x)$.  Let $I_{i}(z) = (z_{1}, z_{2},\ldots, z_{i}')$
 
as in step $3$ (the Inflation Step) of the above construction, and express 

$z_{i}'$ as $z_{i}' = (r-1)2^{e_{i}-e_{i-1}} + N_{r}(b)$ as in the comment to step $3$. 

Then $f_{k}(x)_{i} = N_{r}(b)$, and $1\le f_{k}(x)_{i}\le 2^{e_{i}-e_{i-1}}$.

\smallskip

\noindent{\rm(b)} For any $x,y\in V(G)$, if $|f_{i}(x)_{i} - f_{i}(y)_{i}|\le e$ then $|f_{k}(x)_{i} - f_{k}(y)_{i}|\le 2e + 2$, where the last difference is 
interpreted mod $2^{e_{i}-e_{i-1}}$.

\smallskip

\noindent{\rm(c)}  Consider the map $\bar{\sigma}_{i}:y\rightarrow \sigma_{i}(y)_{1\rightarrow i}$, with $y\in S_{i}^{(P_{i})}(G)'$, obtained from $\sigma_{i}$ by projecting onto the 
first $i$ coordinates.  Then $\bar{\sigma}_{i}$  is one to one when restricted to any one $i$-section; that is, to the set $\{ y\in S_{i}^{r}(G)' \}$ for a given $1\le r\le P_{i}$. 
Hence $\sigma_{i}$ is one to one, and $f_{i}$ is one to one for all $2\le i\le k$.

\smallskip

\noindent{\rm(d)} Let  $1\le r< P_{i}$ and $1\le t< P_{i-1}$ be integers.  Then

(d1) For $i\geq 2$, $S_{i}^{(r)}(G)' = S_{i}^{(r)}(G)$

(d2) For $i\geq 3$, $|Stack_{i}(x,t)| = [t]_{i}$ or $[t]_{i} -1$ for any stack address $x\in \langle Y_{i-1} \rangle$.  

(d3) For $i\geq 3$ and any two stack addresses $x,y\in \langle Y_{i-1} \rangle$, if $x_{i-1} = y_{i-1}$, then $|Stack_{i}(x,t)| = |Stack_{i}(y,t)|$.

 \smallskip
 
 \noindent{\rm(e)} Let $l(i,r) = \lceil \frac{|D_{i-1}^{(r)}|}{2^{e_{i-1}}}  \rceil =\lceil \frac{ra_{i-1}a_{i-2}\cdots a_{1}}{2^{e_{i-1}}}  \rceil$, and 
 $l'(i,r) = \lceil \frac{|D_{i}^{(r)}|}{2^{e_{i-1}}}  \rceil =\lceil \frac{ra_{i}a_{i-1}\cdots a_{1}}{2^{e_{i-1}}}  \rceil$.  Then for $i\geq 3$ 

(e1) for $1\le r\le P_{i-1}$ we have $[r]_{i} = l(i,r)$, and $|Y_{i}^{(l(i,r))}| - |f_{i}(D_{i-1}^{(r)})|< 2^{e_{i-1}}$, and
     
(e2) for $1\le r\le P_{i-1}$ we have $f_{i}(D_{i-1}^{(r)})\subseteq Y_{i}^{(l(i,r))}$, and for $1\le r\le P_{i}$, $f_{i}(D_{i}^{(r)})\subseteq Y_{i}^{(l'(i,r))}$.

(e3) $f_{i}(G)\subseteq Y_{i}^{(u_{i})}\subseteq Opt(G)$.  Moreover, we have $f_{k}(G)\subseteq Opt'(G)$ and $H^{k}(G)\subseteq Opt(G)$.

\smallskip

 \noindent{\rm(f)} For $i\geq 2$ we have $I_{i}(f_{i}(D_{i}^{(r)}))\subseteq S_{i}^{(r)}(G)$ for $1\le r\le P_{i}$, and  
 $S_{i}^{(r)}(G)\subset I_{i}(f_{i}(D_{i}^{(r+1)}))$ for $1\le r\le P_{i}-1$.  
  
 \smallskip
 
 \noindent{\rm(g)} (Stack monotonicity) Suppose $w',w''\in Stack_{i}(x,r)$, $1\le r\le P_{i-1}$, with $\sigma_{i-1}^{-1}(w')\in S_{i-1}^{c}$ and $\sigma_{i-1}^{-1}(w'')\in S_{i-1}^{d}$, $1\le c,d\le r$. Then 
$w''_{i} > w'_{i} \Leftrightarrow d > c$.

\end{lemma}

\begin{proof}Consider first part (a1).  We proceed by induction on $j$.  The base case $j = i+1$ is trivial.  So suppose inductively that 
$f_{j}(x)_{1\rightarrow i} = f_{i+1}(x)_{1\rightarrow i}$ for some $j\geq i+1$.  By the definition of $I_{j}$ and $\sigma_{j}$ from the inflation and stacking steps 
respectively, we have $I_{j}(z)_{1\rightarrow j-1} = z_{1\rightarrow j-1}$ and $\sigma_{j}(y)_{1\rightarrow j-1} = y_{1\rightarrow j-1}$ for $z$ and $y$
in the domain of $I_{j}$ and $\sigma_{j}$ respectively.  
Thus since $j\geq i+1$ 
we have $I_{j}(f_{j}(x))_{1\rightarrow i} = f_{j}(x)_{1\rightarrow i}$.  Therefore  $f_{j+1}(x)_{1\rightarrow i} = \sigma_{j}(I_{j}(f_{j}(x)  )  )_{1\rightarrow i} = I_{j}(f_{j}(x)  )_{1\rightarrow i} 
= f_{j}(x)_{1\rightarrow i}= f_{i+1}(x)_{1\rightarrow i}$.  
This completes the inductive step, so (a1) is proved.  

Next consider (a2).  For $i\geq 2$ we have $ f_{i+1}(x)_{i} = N_{r}(b)$   
by step $4$c (The Stacking Step), and $1\le N_{r}(b)\le 2^{e_{i}-e_{i-1}}$ by the comment to step $3$ and the definition of $N_{r}(b)$, proving (a2).

Consider (b).  We know that $I_{i}(f_{i}(x))$ is in some nonblank $(i-1)$-level, say the $b$'th one, of some $i$-section, say $S_{i}^{r}$, of $S_{i}^{(P_{i})}$.  So we can write 
$I_{i}(f_{i}(x))_{i} = (r-1)2^{e_{i}-e_{i-1}} + N_{r}(b)$.  Since $|f_{i}(x)_{i} - f_{i}(y)_{i}|\le e$ and $I_{i}$ preserves the order of $(i-1)$-levels, it follows that $|I_{i}(f_{i}(x))_{i} - I_{i}(f_{i}(y))_{i}| \le e$.  
Assuming without loss that $I_{i}(f_{i}(y))_{i} > I_{i}(f_{i}(x))_{i}$ (otherwise interchange the roles of $x$ and $y$), we can write 
$I_{i}(f_{i}(y))_{i} = (r-1)2^{e_{i}-e_{i-1}} + N_{r}(b+t)$ for some $0\le t\le e$, where we interpret the function $N_{r}(*)$ in row major order as in the discussion just preceding 
Corollary \ref{zeros column index}. So we have $|f_{k}(x)_{i} - f_{k}(y)_{i}|\le |N_{r}(b+e) - N_{r}(b)|\le 2e+2$, where the first inequality follows from part (a2), and the second by 
using the more generous of the bounds (a) and (b) in 
Corollary \ref{zeros column index}, and interpreting the differences mod $2^{e_{i}-e_{i-1}}$.

Consider (c).  For the first statement, let $y,z\in S_{i}^{r}(G)'$ for some $1\le r\le P_{i}$.  We must show that $(\sigma_{i}(y))_{1\rightarrow i}\ne (\sigma_{i}(z))_{1\rightarrow i}$.  
By the comment to step $3$ (The Inflation Step) we may write 
$y= (y_{1},y_{2},\ldots,y_{i-1}, (r-1)2^{e_{i}-e_{i-1}} + N_{r}(d))$ and $z = (z_{1},z_{2},\ldots,z_{i-1}, (r-1)2^{e_{i}-e_{i-1}} + N_{r}(c))$ 
for suitable integers $d$ and $c$ and $1\le N_{r}(d), N_{r}(c)\le 2^{e_{i}-e_{i-1}}$. 
By step 4 (The Stacking Step), we have $(\sigma_{i}(z))_{1\rightarrow i} = (z_{1}, z_{2},\ldots, z_{i-1},N_{r}(d) )$, 
and $(\sigma_{i}(y))_{1\rightarrow i} = (y_{1}, y_{2},\ldots, y_{i-1},N_{r}(c) )$.  If $y$ and $z$ disagree at one of their first $i-1$ coordinates, then 
obviously $(\sigma_{i}(y))_{1\rightarrow i}\ne (\sigma_{i}(z))_{1\rightarrow i}$ by these formulas. 
So we can suppose that $y_{1\rightarrow i-1} = z_{1\rightarrow i-1} $.  
But since $y$ and $z$ are distinct, the formulas above for $y$ and $z$ force them to disagree in their $i$'th coordinates, so $N_{r}(d) \ne N_{r}(c)$.  Thus $y_{i}\not\equiv z_{i}$ (mod $2^{e_{i}-e_{i-1}}$).   
So by definition of $\sigma_{i}$ we get $(\sigma_{i}(y))_{1\rightarrow i}\ne (\sigma_{i}(z))_{1\rightarrow i}$, as desired. 
For one-to-oneness of $\sigma_{i}$ itself, it only remains to show that if $y,z$ come from distinct sections, then $\sigma_{i}(y)\ne \sigma_{i}(z)$.  So let 
$y\in S_{i}^{s}(G)'$ and $z\in S_{i}^{t}(G)'$, say with $s<t$.  If $(\sigma_{i}(y))_{1\rightarrow i}\ne (\sigma_{i}(z))_{1\rightarrow i}$, then the claim follows obviously, 
so assume $(\sigma_{i}(y))_{1\rightarrow i} = (\sigma_{i}(z))_{1\rightarrow i}$.  Then $\sigma_{i}(y)$ and $\sigma_{i}(z)$ both belong to the stack addressed by 
$(\sigma_{i}(y))_{1\rightarrow i}$, namely, $Stack_{i+1}((\sigma_{i}(y))_{1\rightarrow i},t)$.  But then since $s < t$, $\sigma_{i}(y)$ is lower in this stack 
than $\sigma_{i}(z)$ by definition of $\sigma_{i}$.     
Then $(\sigma_{i}(y))_{i+1}<(\sigma_{i}(z))_{i+1}$, proving one-to-oneness of $\sigma_{i}$.  As for the claim about the $f_{i}$, observe first 
that $f_{2}$ is one-to-one.  So from $f_{i+1} = \sigma_{i}\circ I_{i}\circ f_{i}$ and the one-to-oneness of $\sigma_{i}$ and $I_{i}$, it follows by 
induction on $i$ that $f_{i}$ is one-to-one for all $2\le i\le k$.

For (d), we start by proving (d1) for $i=2$.  By Theorem \ref{two dim properties}e,f, $Y_{2}^{(m-1)}\subset f_{2}(G)\subseteq Y_{2}^{m}$, 
where $m = \lceil \frac {|G|}{2^{e_{1}}} \rceil$ (noting that $m = u_{2}(G)$).  That is, all but at most the last column $Y_{2}^{m}$ of $Y_{2}^{(m)}$ is contained in 
$f_{2}(G)$.  Applying $I_{2}$ to both sides of the last containment and noting that $(I_{2}\circ f_{2})(G) = S_{2}^{(P_{2})}(G)'$, 
we see that every nonblank column of $S_{2}^{(P_{2})}(G)$ lies in $S_{2}^{(P_{2})}(G)'$, except possibly the last one $I_{2}(Y_{2}^{m})$, 
call it $Y_{2}^{d}$.  Since every $2$-section must contain at least one nonblank column we have $Y_{2}^{d}\subset S_{2}^{P_{2}}$ (the last $2$-section), 
 so $S_{2}^{(P_{2}-1)}(G) = S_{2}^{(P_{2}-1)}(G)'$.  Taking subsets, we get $S_{2}^{(r)}(G) = S_{2}^{(r)}(G)'$ for $1\le r < P_{2}$.
 
 Next we show that (d1) for $i$ implies (d2) and (d3) for $i+1$.  Take $x = (x_{1}, x_{2}, \ldots, x_{i})\in  \langle Y_{i} \rangle$ to be a stack address.  By 
 definition of matrix $F(i)$, its entry $f_{t,x_{i}}(i)$ satisfies $f_{t,x_{i}}(i) = 0$ if and only if the $(i-1)$-level $Y= Y_{i}^{ (t-1)2^{e_{i}-e_{i-1}}+x_{i}}$ (the $x_{i}$'th $(i-1)$-level of $i$-section $S_{i}^{t}$) is nonblank.  
 Assume first that $f_{t,x_{i}}(i) = 0$.  Thus $Y$ is nonblank.  Since $t<P_{i}$ (by the hypothesis for (d2) with $i+1$ replacing $i$), 
 we have $Y\subset S_{i}^{(P_{i}-1)}(G) = S_{i}^{(P_{i}-1)}(G)'$, the last equality 
 holding by the conclusion of (d1) for $i$.  Thus every point of $Y$ lies in the domain of $\sigma_{i}$.  So $Stack_{i+1}(x,t)$ receives the point 
  $\sigma_{i}(x_{1}, x_{2}, \ldots, (t-1)2^{e_{i}-e_{i-1}}+x_{i})$ under the stacking map $\sigma_{i}$.  If $f_{t,x_{i}}(i) = 1$, then $Stack_{i+1}(x,t)$ receives no point $\sigma_{i}(y)$ 
  with $y\in S_{i}^{t}$.  It follows that $|Stack_{i+1}(x,t)| = t - \sum_{j=1}^{t}f_{j,x_{i}}(i)$.
  Since by Lemma \ref{balance properties}b the sum on the right must be one of two successive integers depending on $t$, it follows that $|Stack_{i+1}(x,t)|$ is one of two successive integers 
  depending on $t$ but independent of $x$.  Thus by definition of $[t]_{i+1}$, we get $|Stack_{i+1}(x,t)| = [t]_{i+1}$ or $[t]_{i+1} -1$, proving (d2) for $i+1$.     
  Consider now (d3) for $i+1$, and let 
  $y\in  \langle Y_{i} \rangle$ be a stack address with $y_{i} = x_{i}$.  Then clearly $f_{j,y_{i}}(i) = f_{j,x_{i}}(i)$ for $1\le j\le P_{i}$ by 
  the definition of matrix $F(i)$.  Thus  $\sum_{j=1}^{t}f_{j,x_{i}}(i) = \sum_{j=1}^{t}f_{j,x_{i}}(i)$, and hence   $|Stack_{i+1}(x,t)| = |Stack_{i+1}(y,t)|$, as required.

Finally we show that (d2) for $i+1$ in place of $i$ implies (d1) for $i+1$ in place of $i$.  By assumption we have $|Stack_{i+1}(x,t)| = [t]_{i+1}$ or $[t]_{i+1}-1$ for $t\le P_{i}-1$ and any stack address 
 $x\in \langle Y_{i} \rangle$.  Observe that $[P_{i}]_{i+1}\le [P_{i}-1]_{i+1}+1$ since the projection $\sigma_{i}$ onto the first $i$ coordinates is one-to-one when restricted to any 
 single section, in this case $S_{i}^{P_{i}}$, by part (c).  Further, $|Stack_{i+1}(x,P_{i}-1)| \geq [P_{i}-1]_{i+1}-1$ for all stack addresses $x$ by assumption.  It follows that 
 $Y_{i+1}^{([P_{i}-1]_{i+1}-1)}\subseteq f_{i+1}(G)\subseteq Y_{i+1}^{([P_{i}]_{i+1})}\subseteq Y_{i+1}^{([P_{i}-1]_{i+1}+1)}$.  So at most two $i$-levels of $Y_{i+1}^{([P_{i}]_{i+1})}$ 
 are possibly not entirely contained in $f_{i+1}(G)$, these being the top two $Y_{i+1}^{[P_{i}]_{i+1}}$ and $Y_{i+1}^{[P_{i}]_{i+1}-1}$, and if this possibility occurs then $[P_{i}]_{i+1} = [P_{i}-1]_{i+1}+1$ 
 (so also $[P_{i}]_{i+1}-1 = [P_{i}-1]_{i+1}$).        
 But still $Y_{i+1}^{j}\cap f_{i+1}(G) \ne \emptyset$ if and only if $1\le j\le [P_{i}]_{i+1}$ by definition of $[P_{i}]_{i+1}$.  Thus every $i$-level $I_{i+1}(Y_{i+1}^{j})$, $1\le j\le [P_{i}]_{i+1}$, is 
 nonblank, and by the previous sentence all but at most two of these nonblank $i$-levels belong to $S_{i+1}^{(P_{i+1})}(G)'$, the two possible exceptions 
 being $I_{i+1}(Y_{i+1}^{[P_{i}]_{i+1}})$ and $I_{i+1}(Y_{i+1}^{[P_{i}]_{i+1}-1})$.         
 Since any $(i+1)$-section $S_{i+1}^{j}$, $1\le j\le P_{i+1}$, contains at least two nonblank $i$-levels (since $a_{i}>4$ for all $i$)  it follows that 
the top two nonblank $i$-levels of $S_{i+1}^{(P_{i+1})}$ belong to the top $(i+1)$-section $S_{i+1}^{P_{i+1}}$.  So the remaining 
$(i+1)$-sections $S_{i+1}^{j}$, $j<P_{i+1}$, satisfy $S_{i+1}^{j}(G) = S_{i+1}^{j}(G)'$.  In particular we have   
$S_{i+1}^{(P_{i+1}-1)}(G) = S_{i+1}^{(P_{i+1}-1)}(G)'$, and taking subsets we obtain (d1) for $i+1$.

Consider now (e1).  The case $i=2$ follows immediately from Theorem \ref{two dim properties}e,f, and we assume $i\geq 3$. 

For the lower bound $[r]_{i}\geq \lceil \frac{|D_{i-1}^{(r)}|}{2^{e_{i-1}}}\rceil = l(i,r)$ , suppose first that $r<P_{i-1}$.  
By definition we have $[r]_{i}\geq \frac{|\sigma_{i}(S_{i-1}^{(r)}(G)')|}{|\langle Y_{i-1}  \rangle|} = \frac{|\sigma_{i}(S_{i-1}^{(r)})(G)')|}{2^{e_{i-1}}}$.  So since 
$\sigma_{i}$ is one to one, it suffices to show that $|S_{i-1}^{(r)}(G)'|\geq \lceil \frac{|D_{i-1}^{(r)}|}{2^{e_{i-2}}}  \rceil 2^{e_{i-2}}$.  For then we get 
$[r]_{i} \geq \lceil \frac{|D_{i-1}^{(r)}|}{2^{e_{i-2}}}  \rceil 2^{e_{i-2}-e_{i-1}} \geq \frac{|D_{i-1}^{(r)}|}{2^{e_{i-1}}}$, and since $[r]_{i}$ is an integer 
$[r]_{i}\geq \lceil \frac{|D_{i-1}^{(r)}|}{2^{e_{i-1}}}\rceil = l(i,r)$. By part (d1) and Lemma \ref{blank arithmetic} we have $S_{i-1}^{(r)}(G) = S_{i-1}^{(r)}(G)'$, and 
$S_{i-1}^{(r)}(G)'$ consists of $\lceil \frac{|D_{i-1}^{(r)}|}{2^{e_{i-2}}}  \rceil$ many $(i-2)$-levels in $Y_{i-1}$.  Since each such $(i-2)$-level has $2^{e_{i-2}}$ 
points, we get $|S_{i-1}^{(r)}(G)'|\geq \lceil \frac{|D_{i-1}^{(r)}|}{2^{e_{i-2}}}  \rceil 2^{e_{i-2}}$ as required.  Now suppose that $r = P_{i-1}$.  We have 
$|S_{i-1}^{(P_{i-1})}(G)'| = |G| = |P_{i-1}a_{i-1}a_{i-2}\cdots a_{1}|$.  So again since $\sigma_{i}$ is one to one we get 
$[P_{i-1}]_{i}\geq \frac{|\sigma_{i}(S_{i-1}^{(P_{i-1})}(G)')|}{|\langle Y_{i-1}  \rangle|} \geq \frac{ |P_{i-1}a_{i-1}a_{i-2}\cdots a_{1}|}{2^{e_{i-1}}} = \frac{|D_{i-1}^{(P_{i-1})}|}{2^{e_{i-1}}}$, as required.

It remains to show that $[r]_{i}\le l(i,r)$.   
 Recall that for any stack address $x\in \langle Y_{i-1} \rangle$, $|Stack_{i}(x,r)|$ is at most the number of $0$'s in column $x_{i}$  
among the first $r$ rows in matrix $F(i-1)$.  By Theorem \ref{balance properties}b this number of $0$'s is either the same for all $x$ or is one of 
two successive integers, call them $\alpha_{r}$ or $\alpha_{r}-1$, depending only on $r$ (and $i$, which we fix in this argument).  Since $[r]_{i}$ is the maximum of $|Stack_{i}(x,r)|$ 
over all $x\in \langle Y_{i-1} \rangle$, it suffices to prove that 
$\alpha_{r}\le l(i,r)$.  By our construction, the total number of $0$'s in the first $r$ rows of $F(i-1)$ is the number of of nonblank $(i-2)$-levels in $S_{i-1}^{(r)}$.  
So by Lemma \ref{blank arithmetic}a this number of $0$'s is $\lceil \frac{ra_{i-1}a_{i-2}\cdots a_{1}}{2^{e_{i-2}}}  \rceil$.     
Let $\frac{p}{2^{e_{i-2}}} = \lceil \frac{ra_{i-1}a_{i-2}\cdots a_{1}}{2^{e_{i-2}}}  \rceil - \frac{ra_{i-1}a_{i-2}\cdots a_{1}}{2^{e_{i-2}}}$.  Since the number of 
columns of $F(i-1)$ is $2^{e_{i-1}-e_{i-2}}$, we have 
$\alpha_{r} = \lceil \frac{\lceil \frac{ra_{i-1}a_{i-2}\cdots a_{1}}{2^{e_{i-2}}}  \rceil}{2^{e_{i-1}-e_{i-2}}} \rceil = \lceil \frac{ra_{i-1}a_{i-2}\cdots a_{1}}{2^{e_{i-1}}} + \frac{p}{2^{e_{i-1}}}   \rceil = 
\lceil \frac{ra_{i-1}a_{i-2}\cdots a_{1}}{2^{e_{i-1}}} \rceil$, as required.

For the second claim
of (e1), again note that 
each $(i-1)$-level of $Y_{i}$ has size $2^{e_{i-1}}$.  Thus $|Y_{i}^{(l(i,r))}| - |f_{i}(D_{i-1}^{(r)})|= 2^{e_{i-1}}\lceil \frac{|D_{i-1}^{(r)}|}{2^{e_{i-1}}}  \rceil - |D_{i-1}^{(r)}| < 2^{e_{i-1}}$, 
completing the proof of (e1).

For (e2)  we prove both $f_{i}(D_{i-1}^{(r)})\subseteq Y_{i}^{(l(i,r))}$ and $f_{i}(D_{i}^{(r)})\subseteq Y_{i}^{(l'(i,r))}$ by induction.  If $i=2$, then both statements hold by 
Theorem \ref{two dim properties}d,e,f.  So let $i > 2$ be given, and assume inductively that both of these containments hold for $i-1$ in place of $i$.  By this assumption we 
have $f_{i-1}(D_{i-1}^{(r)})\subseteq Y_{i-1}^{(l'(i-1,r))}$.  Applying the inflation map $I_{i-1}$ to both sides, we obtain $(I_{i-1}\circ f_{i-1})(D_{i-1}^{(r)}) \subseteq S_{i-1}^{(r)}(G)$, 
since by Lemma \ref{blank arithmetic} we see that $S_{i-1}^{(r)}$ has the same number $l'(i-1,r)$ of nonblank $(i-2)$-levels as the number of $(i-2)$-levels in $Y_{i-1}^{(l'(i-1,r))}$, 
and $I_{i-1}$ preserves the order of the latter set of $(i-2)$-levels. 
Thus $(I_{i-1}\circ f_{i-1})(D_{i-1}^{(r)})\subseteq S_{i-1}^{(r)}(G)'$ by definition of $S_{i-1}^{(r)}(G)'$.  
Now 
applying $\sigma_{i-1}$ to the last containment, and recalling that $f_{i} = \sigma_{i-1} \circ I_{i-1} \circ f_{i-1}$, we get 
$f_{i}(D_{i-1}^{(r)}) = (\sigma_{i-1}\circ I_{i-1})(f_{i-1}(D_{i-1}^{(r)}))\subseteq \sigma_{i-1}(S_{i-1}^{(r)}(G)')\subseteq Y_{i}^{([r]_{i})}$, where the last containment holds 
by definition of $[r]_{i}$.  By (e1) we have $[r]_{i} = l(i,r)$, thereby proving $f_{i}(D_{i-1}^{(r)})\subseteq Y_{i}^{(l(i,r))}$.  To show $f_{i}(D_{i}^{(r)})\subseteq Y_{i}^{(l'(i,r))}$,  
apply $f_{i}(D_{i-1}^{(r)})\subseteq Y_{i}^{(l(i,r))}$ using $a_{i}r$ in place of $r$ and observing that $D_{i}^{(r)} = D_{i-1}^{(a_{i}r)}$.            
This completes the inductive step, and hence the proof of (e2).

Consider now (e3).  Recall that $P_{i-1} = \prod_{t=i}^{k}a_{t}$, so that $V(G) = V(D_{i-1}^{(P_{i-1})})$.  Noting that 
 $l(i,P_{i-1}) =  \lceil \frac{|D_{i-1}^{(P_{i-1})}|}{2^{e_{i-1}}}   \rceil =  \lceil \frac{|V(G)|}{2^{e_{i-1}}}  \rceil = u_{i}(G)$, by part (e2) we have 
 $f_{i}(G)\subseteq Y_{i}^{(l(i,P_{i-1}))} = Y_{i}^{(u_{i}(G))}$, proving the first containment.  For the rest, note that 
$u_{i}(G)\le 2^{\lceil log_{2}(|G|) \rceil - e_{i-1}}$, so $f_{i}(G)\subseteq Y_{i}^{(u_{i}(G))}\subseteq \langle Y_{i-1} \rangle \times P(2^{\lceil log_{2}(|G|) \rceil - e_{i-1}})$.  The 
last graph is a spanning subgraph of $Opt(G)$ for each $i$, yielding the second containment, and 
for $i=k$ yielding the final two containments after performing the relabeling with hypercube addresses by the map $H^{k}$.   
 
 For (f), start with $f_{i}(D_{i}^{(r)})\subseteq Y_{i}^{(l'(i,r))}$ from (e2).  
 Then we get $I_{i}(f_{i}(D_{i}^{(r)}))\subseteq I_{i}(Y_{i}^{(l'(i,r))}) = S_{i}^{(r)}(G)$. 
The equality holds since the right side consists of $l'(i,r)$ many (nonblank) $(i-1)$-levels, while $I_{i}$ preserves 
the order of $(i-1)$-levels by increasing $i$-coordinate value.

 Next we show that $S_{i}^{(r)}(G)\subset I_{i}(f_{i}(D_{i}^{(r+1)}))$. 
 For the case $i=2$ we apply Theorem \ref{two dim properties}e,f and Corollary \ref{2-dim dilation}e to $G = D_{2}^{(r+1)}$, together with Lemma \ref{blank arithmetic} to obtain 
 $S_{2}^{(r)}(G) = I_{2}(Y_{2}^{(l'(2,r))})\subseteq I_{2}(Y_{2}^{(l'(2,r+1) - 1)})\subseteq I_{2}(f_{2}(D_{2}^{(r+1)}))$.

 Proceeding by induction on $i$, let $i\geq 3$ and assume the statement true for $i-1$ and $1\le r\le P_{i-1}-1$, and we prove it for $i$ and $1\le r\le P_{i}-1$.  It suffices to show that 
 $Y_{i}^{(l'(i,r))}\subset f_{i}(D_{i}^{(r+1)})$, since then we can just apply $I_{i}$ to each side and use $I_{i}(Y_{i}^{(l'(i,r))}) = S_{i}^{(r)}(G)$ (as observed above) to get the result 
 directly.  
 
By (e1) we have 
 $[a_{i}r+2]_{i} = l(i,a_{i}r+2) \geq l'(i,r) +1$, using $2a_{i-1}a_{i-2}\ldots a_{1} > 2^{e_{i-1}}$.  By (d), for any $x\in \langle Y_{i-1}  \rangle$ we have 
 $|Stack_{i}(x, a_{i}r+2)| = [a_{i}r+2]_{i}$ or $[a_{i}r+2]_{i} -1$, so $|Stack_{i}(x, a_{i}r+2)| \geq  l'(i,r)$.  It follows that 
 $Y_{i}^{(l'(i,r))}\subseteq \sigma_{i-1}(S_{i-1}^{(a_{i}r+2)}(G)')\subseteq \sigma_{i-1}(I_{i-1}(f_{i-1}(D_{i-1}^{(a_{i}r+3)})))$, where the last containment is 
 by the inductive hypothesis.  Now since $a_{i}\geq 2^{22} > 3$, we have $a_{i}r+3 < a_{i}(r+1)$, so $D_{i-1}^{(a_{i}r+3)}\subset D_{i-1}^{(a_{i}(r+1))} = D_{i}^{(r+1)}$.  So 
 we have $Y_{i}^{(l'(i,r))}(G)\subset (\sigma_{i-1}\circ I_{i-1}\circ f_{i-1})(D_{i}^{(r+1)}) = f_{i}(D_{i}^{(r+1)})$, as desired.  
 
  For (g) observe that $\sigma_{i-1}$ stacks the sets $S_{i-1}^{r}(G)$ onto $S_{i-1}^{1}$ in succession by increasing $r$; that is, $S_{i-1}^{c}(G)$ is stacked before $S_{i-1}^{d}(G)$  
  precisely when $c < d$.  Part (g) follows.       	 \end{proof}

We can now give the final embedding $H^{k}$ of $G$ into $Opt'(G)$.  It is based on the fact that $f_{k}(G)\subseteq Opt'(G)$ from Lemma \ref{whatever}(e3).  So by definition we 
have  $1\le f_{k}(x)_{j}\le 2^{e_{j}-e_{j-1}}$ for $1\le j\le k$ and all $x\in V(G)$.    
Given these facts, we obtain $H^{k}$ 
 from $f_{k}$ as follows.  For each $1\le j\le k$ we interpret $f_{k}(x)_{j}$  
as a hypercube point in $Q_{e_{j}-e_{j-1}}$ using the inverse image of the labeling $L_{e_{j}-e_{j-1}}$ of Corollary \ref{labeling}.  We then concatenate 
these hypercube points (now strings over $\{0,1\}$) left to right in order of increasing $j$ to obtain $H^{k}(x)$. 
The details are as follows.

\begin{center} \textbf{Construction of the map $H^{k}: G = [a_{1}\times a_{2}\times a_{3} \times \cdots \times a_{k} ]\rightarrow Opt(G)$}
\end{center}

\noindent{\textbf{1.}} Initialization and the case $k=2$.

a) Start with the map $f_{2}: G\rightarrow Y_{2}^{u_{2}}\subseteq P(2^{e_{1}})\times P(2^{e_{2}-e_{1}})$.

b) Define $H^{2}: G\rightarrow Opt(G)$ by $H^{2}(x) = (L_{e_{1}}^{-1}(f_{2}(x)_{1}),L_{e_{2}-e_{1}}^{-1}(f_{2}(x)_{2}))$. 

\noindent{\textbf{2.}} For $k\ge 3$, construct the maps $f_{3}, f_{4}, \ldots, f_{k}$ inductively as follows.

For $i = 3$ to $k-1$, construct $f_{i+1}:G\rightarrow Y_{i+1}$ from $f_{i}$ using the procedure given at the 

beginning of this subsection.

\noindent {\textbf{3.}} Having obtained the map $f_{k}:G\rightarrow Opt'(G)$ from the preceding step, define 
the map $H^{k}: G\rightarrow Opt(G)$ by 
\begin{center}$H^{k}(x) = (H^{k}(x)_{1}, H^{k}(x)_{2}, \ldots,H^{k}(x)_{k})$, where $H^{k}(x)_{j} = L_{e_{j}-e_{j-1}}^{-1}(f_{k}(x)_{j})$ for $1\le j\le k$,
\end{center}
taking $e_{0} = 0$, and where $L_{e_{j}-e_{j-1}}$ is the labeling from Corollary \ref{labeling}.

 \section{The dilation bound}

From Lemma \ref{whatever}e3 we have the containment result $H^{k}(G)\subseteq Opt(G)$.  The goal in this section is to complete the proof of our main 
result by showing that $dilation(H^{k})\le 3k$ when every $a_{i}$ is larger than some fixed constant.

We recall some notation. For $x\in \langle Y_{i-1} \rangle$, recall that $Stack_{i}(x,r) = \{ z = \sigma_{i-1}(y): z_{1\rightarrow i-1}= x$ and $y\in S_{i-1}^{(r)}(G)  \}$, that  
$Stack_{i}'(x,r) = f_{i}(D_{i-1}^{(r)})\cap Stack_{i}(x,r)$, and that  $[r]_{i}$ = max$\{|Stack_{i}(x,r)|: x\in \langle Y_{i-1} \rangle  \}$.    
So by definition we have $Stack_{i}'(x,r)\subseteq Stack_{i}(x,r)$.

To set the context for the next theorem, note that by Lemma \ref{whatever}e1,e2, $f_{i}(D_{i-1}^{(r)})\subseteq Y_{i}^{([r]_{i})}$, so that $|Stack_{i}'(x,r)|\le [r]_{i}$   
In the next theorem we will see that $Stack_{i}'(x,r)$ is always an initial substack of $Stack_{i}(x,r)$; equivalently, that $Stack_{i}'(x,P_{i-1})$ is page monotone.    
Also we will see that
the ``page stack" heights 
 $|Stack_{i}'(x,r)|$, $x\in \langle Y_{i-1} \rangle $, fall within a narrow range; for a given $r$ (and fixed $i$) any two such heights differ by at most $2$ independent of $x$.

For a subset $S\subseteq Y_{i}^{([r]_{i})} $, let $v_{i,r}(S) = |S \cap f_{i}(D_{i-1}^{(r)}) |$.

 \begin{theorem} \label{missing points}
 Let $i\ge 3$ and $1\le r\le P_{i-1}$.  

\noindent{\rm{(a)}} (page monotonicity) Take $x\in \langle Y_{i-1} \rangle$. Let $z',z''\in Stack_{i}(x,P_{i-1})$ with $f_{i}^{-1}(z')\in D_{i-1}^{s}$ and $f_{i}^{-1}(z')\in D_{i-1}^{t}$.  If $z'_{i} > z''_{i}$, then $s\geq t$.

\noindent{\rm{(b)}}   For every stack address 
$x\in \langle Y_{i-1} \rangle $, we have $[r]_{i} - 2\le |Stack_{i}'(x,r)|\le [r]_{i}$. Moreover, 
 all points of $Y_{i}^{([r]_{i})} - f_{i}(D_{i-1}^{(r)})$ lie in the union $Y_{i}^{[r]_{i}}\cup Y_{i}^{[r]_{i}-1}$ of the top two $(i-1)$-levels  of $Y_{i}^{([r]_{i})}$.

\noindent{\rm{(c)}}  $v_{i,r}(Y_{i}^{[r]_{i} -1}) + v_{i,r}(Y_{i}^{[r]_{i}}) > 2^{e_{i-1}}$.
 
 \end{theorem} 
 
 \begin{proof} Consider part (a).  It suffices to show that for any  $x\in \langle Y_{i-1} \rangle$ and $1\le r\le P_{i-1}$, we have that $Stack_{i}(x,r)$ is page monotone.  We prove this 
 by induction on $r$, for any fixed $i\geq 3$ and $x\in \langle Y_{i-1} \rangle$.  For the base case $r = 1$, the claim is trivial since $Stack_{i}(x,1)$ contains a single entry by one to oneness of $\bar \sigma_{i-1}$ 
 (Lemma \ref{whatever}c) on any one section (in this case, on $S_{i-1}^{1}$).  So suppose inductively that $Stack_{i}(x,r)$ is  page monotone for some $1\le r < P_{i-1}$.  We use Lemma \ref{whatever}f, with $i-1$ in place of $i$.  
Applying $\sigma_{i-1}$ to the second containment $S_{i-1}^{(r)}(G)\subset I_{i-1}(f_{i-1}(D_{i-1}^{(r+1)}))$ 
stated there and noting that $f_{i} = \sigma_{i-1}\circ I_{i-1}\circ f_{i-1}$, 
we see that every entry $z$ of $Stack_{i}(x,r)$ satisfies $f_{i}^{-1}(z)\in D_{i-1}^{(r+1)}$.  If $Stack_{i}(x,r+1) = Stack_{i}(x,r)$, then trivially $Stack_{i}(x,r+1)$ 
is page monotone by induction.  So assume $Stack_{i}(x,r+1) \ne Stack_{i}(x,r)$, and let $y$ be the unique element (by one to oneness of $\bar \sigma_{i-1}$ on any section) of $Stack_{i}(x,r+1) - Stack_{i}(x,r)$.  
By the first containment in Lemma \ref{whatever}f (again with $i-1$ replacing $i$) we have, on applying $\sigma_{i-1}$ to each side again, $f_{i}^{-1}(y)\in D_{i-1}^{j}$ for some $j\geq r+1$.
Now $Stack_{i}(x,r)$ is page monotone by induction and as just noted $f_{i}^{-1}(z)\in D_{i-1}^{(r+1)}$ for all $z\in Stack_{i}(x,r)$.  Since $Stack_{i}(x,r+1)$ is obtained by placing 
$y$ at the top of $Stack_{i}(x,r)$ and $y\in f_{i}(D_{i-1}^{j})$ for some $j\geq r+1$, it follows that $Stack_{i}(x,r+1)$ is page monotone, completing the inductive step.

 For part (b), the upper bound $|Stack_{i}'(x,r)|\le [r]_{i}$ follows from $Stack_{i}'(x,r)\subseteq Stack_{i}(x,r)$ and the 
 definition of $[r]_{i}$.   
 
 Consider the lower bound on $|Stack_{i}'(x,r)|$ in part (b).  The statement holds vacuously for $r=1$ and all $i\geq 3$, since $|Stack_{i}'(x,1)| = 0$ or $1$ depending on $x$, and $[1]_{i} = 1$, by one to oneness 
 of the projection map $\bar{\sigma}_{i-1}$ on any section from Lemma \ref{whatever}c.  So suppose $r\geq 2$. 

Fixing $i$, for any stack address $x\in \langle Y_{i-1}  \rangle$, let $s(x,r) = |Stack_{i}(x,r)| $ 
and $s'(x,r) = |Stack_{i}'(x,r)| $.  By Lemma \ref{whatever}f we have $S_{i-1}^{(r-1)}(G)\subseteq (I_{i-1}\circ f_{i-1})(D_{i-1}^{(r)})$.
Now applying $\sigma_{i-1}$ to both sides of this containment and using $Stack_{i}(x,r-1)\subseteq Stack_{i}(x,r)$ and $f_{i} = \sigma_{i-1}\circ I_{i-1}\circ f_{i-1}$, we have 
$Stack_{i}(x,r-1) = [\sigma_{i-1}(S_{i-1}^{(r-1)}(G))\cap Stack_{i}(x,r-1)]\subseteq [f_{i}(D_{i-1}^{(r)})\cap Stack_{i}(x,r)] = Stack_{i}'(x,r)$.  Thus 
$s'(x,r)\geq s(x,r-1)$. 
Since $r-1\le P_{i-1}-1$, by Lemma \ref{whatever}d we have 
$s(x,r-1) = [r-1]_{i}$ or $[r-1]_{i}-1$.  Therefore $s'(x,r) \geq [r-1]_{i}-1$.  By Lemma \ref{whatever}c 
we have $[r-1]_{i} \le [r]_{i} \le [r-1]_{i}+1$.  Thus $s'(x,r) \geq  [r-1]_{i}-1\geq  [r]_{i} - 2$, proving the first sentence of (b).  

The second sentence of (b) follows from the first sentence, together with 
the page monotonicity property of part (a).

Next consider (c).  By Lemma  \ref{whatever}e1 we have $|Y_{i}^{([r]_{i})}| - |f_{i}(D_{i-1}^{(r)})|< 2^{e_{i-1}}$.  Let 
 $A = |Y_{i}^{[r]_{i}}\cup Y_{i}^{[r]_{i}-1}|$ and $B = v_{i,r}(Y_{i}^{[r]_{i} -1}) + v_{i,r}(Y_{i}^{[r]_{i}})$.  By (b) we have $Y_{i}^{([r]_{i}-2)}\subseteq f_{i}(D_{i-1}^{(r)})$, so 
 $|Y_{i}^{([r]_{i})}| - |f_{i}(D_{i-1}^{(r)})| = A - B$.  
So 
$2^{e_{i-1}} > |Y_{i}^{([r]_{i})}| - |f_{i}(D_{i-1}^{(r)})| = A - B = 2\cdot 2^{e_{i-1}} - B$, so $B > 2^{e_{i-1}}$, as required.   
\end{proof}
 
 We introduce notation for identifying particular 
 $(i-1)$-subpages of a given $i$-page in $G$.  For $1\le j\le a_{i}$ let $D_{i}^{r}(j) = D_{i-1}^{(r-1)a_{i}+j}$, and we regard $D_{i}^{r}(j)$ as the 
 $j$'th  $(i-1)$-subpage of $D_{i}^{r}$ under the ordering of $(i-1)$-subpages of $D_{i}^{r}$ induced by $\prec_{i-1}$.  Similarly 
 let $S_{i}^{r}(j)$, $j\geq 1$, be the $j$'th nonblank $(i-1)$-level of $S_{i}^{r}(G)$, ordered by increasing $i$-coordinate.

 Now suppose $z = (I_{i}\circ f_{i})(x)$ for some $x\in D_{i}^{r}$.  We let $\nu_{i}(z)$ be the integer such that $z\in S_{i}^{r}(\nu_{i}(z))$ for suitable $r$; that is, $z$ belongs to the $\nu_{i}(z)$'th  nonblank 
 $(i-1)$-level, ordered by increasing $i$-coordinate, of the $i$-section $S_{i}^{r}$ which contains $z$.  To illustrate, recall 
 the examples at the right of Figure \ref{7and12stacking} and the left of Figure \ref{stacking in 4D}.  In Figure \ref{7and12stacking} we have 
 the first $8$ many $2$-levels of $Y_{3}$ containing the image 
 $f_{3}(G)$, where $G = [3\times 7\times 4\times 3]$.  In Figure \ref{stacking in 4D} at left we have inserted $4$ blank $2$-levels among these (as specified by the matrix in Table 1c), and grouped the resulting 
 $12$ many $2$-levels into the three $3$-sections $S_{3}^{j}$, $1\le j\le 3$, preserving the order of the nonblank levels.  So for example the $6$'th $2$-level in Figure \ref{7and12stacking} 
 (ordered by $3$'rd coordinate, or height in the figure) becomes, 
 after the map $I_{2}$ inserts these $4$ blank $2$-levels, the $3$'rd nonblank 
 $2$-level $S_{3}^{2}(3)$ of section $S_{3}^{2}$ (again ordered by $3$'rd coordinate, or height within $S_{3}^{2}$) in Figure \ref{stacking in 4D}.  So any point $z = (I_{3}\circ f_{3})(x)$, 
 where $f_{3}(x)$ was in the $6$'th $2$-level of Figure \ref{7and12stacking}, satisfies $\nu_{3}(z) = 3$ since $z$ belongs to the third nonblank 
 $2$-level of the $3$-section (in this case $S_{3}^{2}$) containing $z$ (as shown in Figure \ref{stacking in 4D}).  Similarly take any point $f_{3}(x)$ lying in the $7$'th level of Figure \ref{7and12stacking}.  Then 
 the corresponding point $z = (I_{3}\circ f_{3})(x)$ after inflation satisfies $\nu_{3}(z) = 1$, since $z$ belongs to the first nonblank $2$-level  $S_{3}^{3}(1)$ of the $3$-section 
 $S_{3}^{3}$ containing $z$ (as shown in Figure \ref{stacking in 4D}).    
 
Let $m_{r}$ be the number of nonblank $(i-1)$-levels in $S_{i}^{r}$ for any $1\le r\le P_{i}$.  We interpret the $(i-1)$-levels $S_{i}^{r}(j)$ for integers $j$ outside the range $[1,m_{r}]$, and 
also interpret the differences $\nu_{i}(z') - \nu_{i}(z'')$ using ``wraparound" as follows. For example,  
if $j\le 0$, then $ S_{i}^{r}(j)$  is understood as $S_{i}^{r-1}(m_{r-1}+ j)$.  Similarly, if $j > m_{r}$, then 
$S_{i}^{r}(j)$ is understood as $S_{i}^{r+1}(j-m_{r+1})$.  Also let $z'\in S_{i}^{r}(\nu_{i}(z'))$ and $z''\in S_{i}^{s}(\nu_{i}(z''))$, where 
$1\le \nu_{i}(z')\le m_{r}$ and $1\le \nu_{i}(z'')\le m_{s}$.  Then define $|| \nu_{i}(z'') - \nu_{i}(z') ||$ as the minimum of 
$\{ | \nu_{i}(z'') - \nu_{i}(z') |, m_{r}-\nu_{i}(z')+\nu_{i}(z''), m_{s}-\nu_{i}(z'')+\nu_{i}(z')    \}$.

 \begin{corollary} \label{containment}  For $2\le i\le k$ we have the following.
 
 \medskip
 
 \noindent{\rm(a)} For each $i\geq 2$ and $j\geq 1$, we have $(I_{i}\circ f_{i})(D_{i}^{j})\subseteq S_{i}^{j-1}\cup S_{i}^{j}$.  Hence for $1\le r\le P_{i-1}$ we have the following.
 
(1) For any stack address $x\in \langle Y_{i-1} \rangle$ and $1\le r\le P_{i-1}$, $|Stack_{i}(x,r)\cap f_{i}(D_{i-1}^{r})|\le 2$.
 
(2) Suppose $|Stack_{i}(x,r)\cap f_{i}(D_{i-1}^{r})| = 2$, and let $Stack_{i}(x,r)\cap f_{i}(D_{i-1}^{r}) = \{ z', z'' \}$.  Then $z'$ and $z''$ are at successive heights in 
$Stack_{i}(x,r)$; that is, $|z'_{i} - z''_{i}|\le 1$.

(3) Let $z\in D_{i-1}^{r}$, with $f_{i}(z)\in Stack_{i}(x,r)$ for some $x\in \langle Y_{i-1} \rangle$.  Then $[r]_{i}-2\le f_{i}(z)_{i}\le [r]_{i}$.     
 
 \medskip

 \noindent{\rm(b)} For $x,y\in G$, suppose $x\in D_{i}^{r}(q)$ and $y\in D_{i}^{s}(q)$ for some $1\le q\le a_{i}$ and 
$1\le r,s\le P_{i}.$  Let $z' = (I_{i}\circ f_{i})(x)$ and  $z'' = (I_{i}\circ f_{i})(y)$. 
Then $||\nu_{i}(z') - \nu_{i}(z'')||\le 3.$

\medskip 

\noindent{\rm(c)} For $x,y\in G$, let $I_{i}(f_{i}(x))\in S_{i}^{s}(G)$ and $I_{i}(f_{i}(y))\in S_{i}^{t}(G)$, $1\le s\le t\le P_{i}$.  

(1) If $s=t$, then $|f_{i+1}(x)_{i+1} - f_{i+1}(y)_{i+1}|\le 1$.

(2) If $|s - t| = 1$, then $|f_{i+1}(x)_{i+1} - f_{i+1}(y)_{i+1}|\le 2$.

\medskip

\noindent{\rm(d)} Suppose $x\in D_{i}^{s}$, $y\in D_{i}^{t}$, $1\le s\le t\le P_{i}$.

(1) If $s=t$, then $|f_{i+1}(x)_{i+1} - f_{i+1}(y)_{i+1}|\le 2$.

(2) If $|s - t| = 1$, then $|f_{i+1}(x)_{i+1} - f_{i+1}(y)_{i+1}|\le 3$.

 \end{corollary}

 \begin{proof} Consider (a).  Since $(I_{i}\circ f_{i})(D_{i}^{(j)})\subseteq S_{i}^{(j)}$ by Lemma \ref{whatever}f, 
 it suffices to show that $(I_{i}\circ f_{i})(D_{i}^{j})\cap S_{i}^{(j-2)} = \emptyset$.  Recall that $D_{i}^{(j-1)} = D_{i-1}^{(a_{i}(j-1))}$.  By 
Lemma \ref{whatever}e1 we have $[a_{i}(j-1)]_{i} = \lceil \frac{|D_{i-1}^{(a_{i}(j-1))}|}{2^{e_{i-1}}}  \rceil   = \lceil \frac{(j-1)a_{i}a_{i-1}\cdots a_{1}}{2^{e_{i-1}}} \rceil$.  
 So by Lemma \ref{blank arithmetic} we see that $[a_{i}(j-1)]_{i}$ is the number of nonblank $(i-1)$-levels in $S_{i}^{(j-1)}$.
 Since $I_{i}$ preserves the order (by increasing $i$ coordinate) of $(i-1)$-levels, it follows that $I_{i}(Y_{i}^{([a_{i}(j-1)]_{i})}) = S_{i}^{(j-1)}(G)$.
 By Theorem  \ref{missing points}b, only the top two $(i-1)$-levels of $Y_{i}^{([a_{i}(j-1)]_{i})}$ may contain points not in 
 $f_{i}(D_{i-1}^{(a_{i}(j-1))}) = f_{i}(D_{i}^{(j-1)})$. Again since $I_{i}$ preserves the order of $(i-1)$-levels, it follows that only the top 
 two nonblank $(i-1)$-levels of $I_{i}(Y_{i}^{([a_{i}(j-1)]_{i})}) = S_{i}^{(j-1)}$, call 
 them $Y_{i}^{c}$ and $Y_{i}^{d}$, may contain points not in $(I_{i}\circ f_{i})(D_{i}^{(j-1)})$.          
Thus $(I_{i}\circ f_{i})(D_{i}^{j})\cap S_{i}^{(j-1)}\subset (Y_{i}^{c}\cup Y_{i}^{d})$. But since $a_{i}\geq 2^{22} > 4$ for all $i$, these top two nonblank $(i-1)$-levels 
$Y_{i}^{c}$, $Y_{i}^{d}$ of $S_{i}^{(j-1)}$ lie in the last $i$-section  $S_{i}^{j-1}$ of $S_{i}^{(j-1)}$.  So $(I_{i}\circ f_{i})(D_{i}^{j})\cap S_{i}^{(j-2)} = \emptyset$ 
as desired. 

We now consider the consequences (a1)-(a3) of (a), starting with (a1).  By (a) we have $(I_{i-1}\circ f_{i-1})(D_{i-1}^{r})\subseteq S_{i-1}^{r-1}\cup S_{i}^{r}$.  
Thus $f_{i}(D_{i-1}^{r})\subseteq \sigma_{i-1}(S_{i-1}^{r-1}(G))\cup \sigma_{i-1}(S_{i-1}^{r}(G))$. 
Also the projection 
map $\bar \sigma_{i-1}$ is one to one when restricted to any single $(i-1)$-section, in particular to $S_{i-1}^{r-1}(G)$ and to $S_{i-1}^{r}(G)$.  Thus $f_{i}(D_{i-1}^{r})$ can contribute at most 
two points to any single stack $Stack_{i}(x,r)$; namely, up to one point from each of $S_{i-1}^{r-1}(G)$ and $S_{i-1}^{r}(G)$, proving (a1).

Part (a2) follows immediately from page monotonicity of $Stack_{i}'(x,r)$ (Theorem \ref{missing points}(a)).

Consider (a3).  For the upper bound note that $(I_{i-1}\circ f_{i-1})(D_{i-1}^{r})\subset S_{i-1}^{(r)}$ by part (a).  Since $z\in D_{i-1}^{r}$, we have 
$f_{i}(z)_{i}\le$ max$\{ \sigma_{i-1}(u)_{i}: u\in S_{i-1}^{(r)}(G) \} = [r]_{i}$ by definition of $[r]_{i}$.  Consider now the lower bound in (a3). 
If $f_{i}(z)$ is the highest point in $Stack_{i}'(x,r)$, then (a3) follows immediately from Theorem \ref{missing points}b.  Otherwise, by part (a2) we have that $f_{i}(z)$ is the second highest 
point in $Stack_{i}'(x,r)$.  Let the highest such point point be $f_{i}(y)$, $y\in D_{i-1}^{r}$, so that $f_{i}(y)_{i} - f_{i}(z)_{i} =1$ again by part (a2).  As in the proof of (a1), we must have 
$(I_{i-1}\circ f_{i-1})(z)\in S_{i-1}^{r-1}$ and $(I_{i-1}\circ f_{i-1})(y)\in S_{i-1}^{r}$.  
Thus by stack monotonicity (Lemma \ref{whatever}g) $f_{i}(z)$ is the highest point in $Stack_{i}(x,r-1)$, so $f_{i}(z)_{i} = |Stack_{i}(x,r-1)|$.  Since $r-1 < P_{i-1}$, by Lemma \ref{whatever}d2 
we have $f_{i}(z)_{i} = |Stack_{i}(x,r-1)| \geq [r-1]_{i} - 1$.  It remains to check that  $[r-1]_{i} - 1\geq [r]_{i} - 2$, which follows directly from the formula  
$[r]_{i} = \lceil \frac{ra_{i-1}a_{i-2}\cdots a_{1}}{2^{e_{i-1}}}  \rceil$ proved in Lemma \ref{whatever}e1.

For (b), recall that $D_{i}^{r}(q) = D_{i-1}^{(r-1)a_{i}+q}$, with the same formula for $D_{i}^{s}(q)$ where $r$ is replaced by $s$.  Let $d_{i} = a_{i}a_{i-1}\dots a_{1}$.  
By part (a) we have $(I_{i-1}\circ f_{i-1})(D_{i-1}^{(r-1)a_{i}+q})\subseteq S_{i-1}^{(r-1)a_{i}+q-1}\cup S_{i-1}^{(r-1)a_{i}+q}$.  Hence using Lemma \ref{whatever}e1 
and the upper bound in part (a3), we have $f_{i}(x)_{i}\le [ (r-1)a_{i}+q ]_{i} = \lceil \frac{   ((r-1)a_{i}+q)d_{i-1} }{2^{e_{i-1}}} \rceil $.  Now by 
Lemma \ref{blank arithmetic}a and our construction 
we know that $S_{i}^{(r-1)}$ has $\lceil \frac{(r-1)d_{i}}{2^{e_{i-1}}} \rceil = \lceil \frac{(r-1)a_{i}d_{i-1}}{2^{e_{i-1}}} \rceil $ nonblank $(i-1)$-levels. Thus the first 
$\lceil \frac{(r-1)a_{i}d_{i-1}}{2^{e_{i-1}}} \rceil$ nonblank blank $(i-1)$-levels in the image $I_{i}(Y_{i}^{([ (r-1)a_{i}+q ]_{i}  )  }   )$ belong to $S_{i}^{(r-1)}$.     
Hence 
$\nu_{i}(z')\le \lceil \frac{   ((r-1)a_{i}+q)d_{i-1} }{2^{e_{i-1}}} \rceil -   \lceil \frac{(r-1)a_{i}d_{i-1}  }{2^{e_{i-1}       }   }           \rceil$.  Similarly, this time using 
Lemma \ref{whatever}e1 and the lower bound in (a3), we have 
$\nu_{i}(z'')\ge \lceil \frac{   ((s-1)a_{i}+q)d_{i-1} }{2^{e_{i-1}}} \rceil - 2 -   \lceil \frac{(s-1)a_{i}d_{i-1}  }{2^{e_{i-1}       }   }           \rceil$.  It follows that 
$\nu_{i}(z') - \nu_{i}(z'')\le 3$.  A symmetric argument interchanging the roles of $r$ and $s$ yields $\nu_{i}(z') - \nu_{i}(z'')\geq -3$, completing (c).

 Consider part (c), starting with (c1).  Let $x'\in \langle Y_{i} \rangle$ and $y'\in \langle Y_{i} \rangle$ be the stack addresses of the images $f_{i+1}(x)$ 
 and $f_{i+1}(y)$; that is $x' = f_{i+1}(x)_{1\rightarrow i}$ and $y' = f_{i+1}(x)_{1\rightarrow i}$.  
 Since $s-1\le P_{i}-1$, by Lemma \ref{whatever}d we have 
 $[s-1]_{i+1} -1 \le |Stack_{i+1}(x',s-1)|, |Stack_{+1}(y',s-1)|\le [s-1]_{i+1}$.  Since $I_{i}(f_{i}(x))\in S_{i}^{s}(G)$, we have 
 $f_{i+1}(x) = \sigma_{i}(I_{i}(f_{i}(x)))\in Stack_{i+1}(x',s) - Stack_{i+1}(x',s-1)$.  By Lemma \ref{whatever}c $f_{i+1}(x)$ is the only 
 member of $Stack_{i+1}(x',s) - Stack_{i+1}(x',s-1)$, and thus $|Stack_{i+1}(x',s)|\le |Stack_{i+1}(x',s-1)| +1$.  It follows that 
  $ [s-1]_{i+1} \le f_{i+1}(x)_{i+1} \le [s-1]_{i+1}+1 $, with the same inequality holding for $f_{i+1}(y)_{i+1}$.  Part (c1) follows.
  
  For (c2), suppose 
 without loss of generality that $s = t-1$.   
 Again using Lemma \ref{whatever}c, we get $[s]_{i+1}\le [t]_{i+1}\le [s]_{i+1}+1$.  
 Since $I_{i}(f_{i}(y)) \in S_{i}^{t}(G)$ we have  $f_{i+1}(y)_{i+1}\le [t]_{i+1}$ by definition of $[t]_{i+1}$.  Again since  $I_{i}(f_{i}(y)) \in S_{i}^{t}(G) = S_{i}^{s+1}(G)$ and using 
 Lemma \ref{whatever}d2 we have 
 $f_{i+1}(y)_{i+1} = 1 + |Stack_{i+1}(y',s)| \geq 1+ [s]_{i+1} -1 = [s]_{i+1}$.  Hence we have $[s]_{i+1}\le f_{i+1}(y)_{i+1} \le [s]_{i+1} +1$.      
 Since $s\le P_{i}-1$ and $f_{i+1}(x)$ is the topmost entry of $Stack_{i+1}(x',s)$, it follows by Lemma \ref{whatever}d2 that $[s]_{i+1}-1\le f_{i+1}(x)_{i+1}\le [s]_{i+1}$.  Part (c2) follows.

For (d), set $\alpha = I_{i}(f_{i}(x))$ and $\beta = I_{i}(f_{i}(y))$, and consider first (d1).  By part (a),   
$\alpha$ and $\beta$ lie in the same $i$-section or in successive $i$-sections $S_{i}^{s-1}$, $S_{i}^{s}$ of $Y_{i}$.
Hence (d1) follows directly from part (c).  For (d2), suppose $s = t-1$.  Again by part (a), $\alpha \in S_{i}^{s-1}\cup S_{i}^{s} = S_{i}^{t-2}\cup S_{i}^{t-1}$, 
and $\beta \in S_{i}^{t-1}\cup S_{i}^{t}$.  If $\alpha$ and $\beta$ belong to successive $i$-sections (e.g. $S_{i}^{t-2}$ and $S_{i}^{t-1}$, 
or $S_{i}^{t-1}$ and $S_{i}^{t}$ respectively), then $|f_{i+1}(x)_{i+1} - f_{i+1}(y)_{i+1}| \le 2$ by part (c).  So we can suppose that 
$\alpha \in S_{i}^{t-2}$ and $\beta \in S_{i}^{t}$.  
Since the projection $\bar \sigma_{i}$ is one to one on any one $i$-section (Lemma \ref{whatever}c), 
we have $[t]_{i+1} \le [t-2]_{i+1}+2$.  Since $t-1\le P_{i}-1$ we have 
$f_{i+1}(x)_{i+1} \geq [t-2]_{i+1}-1$ by Lemma \ref{whatever}d, while $f_{i+1}(y)_{i+1} \le [t]_{i+1}$ since $\beta \in S_{i}^{t}$.  Thus $|f_{i+1}(x)_{i+1} - f_{i+1}(y)_{i+1}| \le 3$, completing (d2).   \end{proof}

We now proceed to a bound on the dilation of our embedding.  In the proof we frequently apply Corollary \ref{zeros column index}, where 
 (in the language of its statement) we use $N_{u}(d)$ relative to the matrix $F^{X} = (f_{uv})$ with the settings in part (d) of that Corollary.  By the 
 construction given immediately after that Corollary, we have $F^{X} = F(i)$.  
 The value of $i$ will change from one application to another.  So in the proof which follows, let $N_{i,u}(d)$ denote the $N_{u}(d)$ of the corollary, 
 when in the application we intend to use $F^{X} = F(i)$.  So for example, in applying Corollary \ref{zeros column index} with $e=3$ and $F^{X} = F(2)$, we would conclude 
 that $|N_{2,u}(d+3) - N_{2,u}(d)| \le 2\cdot 3 +2 = 8$, using the more generous of the bounds (a) and (b).   
 \begin{theorem}\label{dilation bound}
 
 Let $G = [ a_{1}\times a_{2}\times \cdots \times a_{k}]$ be a $k$-dimensional grid with $a_{i}\geq 2^{22}$ for each $i$.  Then the embedding 
 $H^{k}:G\rightarrow Opt(G)$ from section \ref{higher construction} satisfies $dilation(H^{k})\le 3k$. 
 
 \end{theorem}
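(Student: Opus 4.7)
The plan is to bound the hypercube distance $\mathrm{dist}_{Opt_k(G)}(H^k(x),H^k(y))$ between the images of any two adjacent $x,y \in V(G)$. Because $Opt_k(G) = Q_{e_1}\times Q_{e_2 - e_1}\times\cdots\times Q_{e_k - e_{k-1}}$ and $H^k(v)_j = L_{e_j - e_{j-1}}^{-1}(f_k(v)_j)$ acts independently per block, the total distance decomposes as $\sum_{j=1}^{k} \mathrm{dist}_{Q_{e_j - e_{j-1}}}(H^k(x)_j, H^k(y)_j)$, and I will show each summand is at most $3$, yielding $dil(H^k)\le 3k$. The key conversion tool is Corollary \ref{labeling}: once the block dimension $e_j - e_{j-1}$ is large enough (which the hypothesis $a_i \ge 2^{12}$ is used to guarantee), a small absolute difference $|f_k(x)_j - f_k(y)_j|$ implies hypercube distance at most $3$ in that block.

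Given an edge $xy \in E(G)$ with $x,y$ differing in a single coordinate $m$, the core task is to bound $|f_k(x)_j - f_k(y)_j|$ for each $j$. For $j=1$, every $\sigma_t$ with $t\ge 2$ preserves the first coordinate, so $|f_k(x)_1 - f_k(y)_1| = |f_2(x)_1 - f_2(y)_1| \le 3$ by Corollary \ref{2-dim dilation}(a). For $j=2$, $|f_2(x)_2 - f_2(y)_2| \le 1$ by the same corollary, so Lemma \ref{whatever}(b) yields $|f_k(x)_2 - f_k(y)_2| \le 5$. For $j\ge 3$ I split into three subcases based on the relation of $m$ and $j$. If $m<j$, then $x$ and $y$ lie in a common $(j-1)$-page, so Corollary \ref{containment}(e1) combined with Lemma \ref{whatever}(b) gives $|f_k(x)_j - f_k(y)_j| \le 7$. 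If $m=j$, then $x$ and $y$ lie in adjacent $(j-1)$-subpages of the same $j$-page, so Corollary \ref{containment}(e2) and Lemma \ref{whatever}(b) give $|f_k(x)_j - f_k(y)_j| \le 11$. If $m>j$, then $x$ and $y$ sit at the same relative position $q$ within distinct $j$-pages, so Corollary \ref{containment}(c) gives $|\nu_j(I_j(f_j(x))) - \nu_j(I_j(f_j(y)))| \le 3$, and combining Lemma \ref{whatever}(a) with Corollary \ref{zeros column index}(c) yields $|f_k(x)_j - f_k(y)_j| \le 2(3)+7 = 13$.

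In every case the bound on $|f_k(x)_j - f_k(y)_j|$ is an absolute constant, and under the hypothesis $a_i\ge 2^{12}$ each block dimension $e_j - e_{j-1}$ is large enough for Corollary \ref{labeling} to translate this bound into hypercube distance at most $3$ in that block. Summing the $k$ block contributions gives $\mathrm{dist}_{Opt_k(G)}(H^k(x),H^k(y)) \le 3k$.

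The main obstacle is the case analysis in the second paragraph, particularly verifying that adjacent vertices of $G$ differing in coordinate $m$ really are placed into adjacent $(j-1)$-pages precisely when $m=j$, and at the same in-page position $q$ precisely when $m>j$; this rests on the lexicographic indexing of pages encoded by the relation $D_i^r(q) = D_{i-1}^{(r-1)a_i + q}$. A secondary subtlety is that whenever $x$ and $y$ land in different sections after inflation, the weaker cross-row bound $2e+7$ from Corollary \ref{zeros column index}(c) is used in place of the same-row bound $2e+3$ from part (b); one must check that the resulting threshold $13$ still fits within the regime in which Corollary \ref{labeling} guarantees hypercube distance at most $3$ under the hypothesis $a_i \ge 2^{12}$.
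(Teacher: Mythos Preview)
Your overall architecture matches the paper's proof: reduce to bounding $|f_k(x)_j - f_k(y)_j|$ block by block, then invoke Corollary~\ref{labeling} to convert each block bound into hypercube distance at most $3$. The case split for $j\ge 3$ into $m<j$, $m=j$, $m>j$ is also exactly what the paper does, and your bounds of $7$, $11$, $13$ in those subcases agree with the paper's.

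There is, however, a genuine gap in your treatment of $j=2$. You claim $|f_2(x)_2 - f_2(y)_2|\le 1$ for \emph{every} edge $xy\in E(G)$, citing Corollary~\ref{2-dim dilation}(a). But that bound is false when $x$ and $y$ differ in a coordinate $m\ge 3$: under the identification $\kappa$, such $x$ and $y$ land on the \emph{same} chain of $G(a_1)$ at positions differing by $W_{m-1}=a_2\cdots a_{m-1}$, so their second coordinates under $f_2$ (which are column indices in $Y_2$) differ by roughly $W_{m-1}\cdot a_1/2^{e_1}$, which is large. Indeed the proof of Corollary~\ref{2-dim dilation}(a) in the appendix only treats the two cases ``successive points on the same chain'' ($m=2$) and ``corresponding points on adjacent chains'' ($m=1$); it never addresses $m\ge 3$. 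The paper's own proof of Theorem~\ref{dilation bound} recognizes this and, for $j=2$ with $i_0\ge 3$, gives a separate argument: it compares the initial segments $T_1,T_2$ of the chain within the two relevant $2$-pages, uses Corollary~\ref{2-dim dilation}(d),(e) together with Lemma~\ref{blank arithmetic} to get $|\nu_2(z')-\nu_2(z'')|\le 2$, and then applies Corollary~\ref{zeros column index}(c) with $e=2$ to obtain $|f_k(x)_2 - f_k(y)_2|\le 11$. In other words, the case $j=2,\ m>2$ must be handled just like your case $j\ge 3,\ m>j$, not folded into the $|f_2(\cdot)_2|$-difference bound. Once you add this subcase your argument goes through with the same final threshold of $17$.
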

 
 \begin{proof}Let $x = (x_{1}, x_{2}, \ldots, x_{k})$ and $y = (y_{1}, y_{2}, \ldots, y_{k})$ be arbitrary adjacent points of $G$, and set 
 $Q = Opt(G)$ and $f = f_{k}$ for short.  Recall that 
 \newline $H^{k}(x) = (H^{k}(x)_{1}, H^{k}(x)_{2}, \ldots, H^{k}(x)_{j},\ldots, H^{k}(x)_{k})\in Q$, 
 where $H^{k}(x)_{j} = L_{e_{j}-e_{j-1}}^{-1}(f(x)_j)$ is the $(0,1)$ string of $Q_{e_{j}-e_{j-1}}$ equivalent 
 to $f(x)_{j}$ under the labeling $L_{e_{j}-e_{j-1}}$ of Corollary \ref{labeling}.  It suffices to show for each $j$, $1\le j \le k$, that $|f(x)_{j} - f(y)_{j}|\le 17$.  For then 
 by the definition of $H^{k}$ just given and Corollary \ref{labeling}b we have $ dist_{Q_{e_{j}-e_{j-1}}}(H^{k}(x)_{j}, H^{k}(y)_{j})\le 3$ for each $1\le j\le k$.  So we obtain 
 $dist_{Q}(H^{k}(x),H^{k}(y)) = \sum_{j=1}^{k}dist_{Q_{e_{j}-e_{j-1}}}(H^{k}(x)_{j}, H^{k}(y)_{j})\le 3k$, and our desired dilation bound follows.  Thus we are reduced to showing 
 that $|f(x)_{j} - f(y)_{j}|\le 17$ for each $j$.

 Let $i_{0}:= i_{0}(x,y)$, 
 $1\le i_{0}\le k$, be the unique coordinate at which the above adjacent points $x, y\in V(G)$ disagree, so $|x_{i_{0}} - y_{i_{0}}| = 1$.  Also let 
 $L(j,i_{0})=  max\{ |f(x)_{j} - f(y)_{j}|: xy\in E(G), |x_{i_{0}} - y_{i_{0}}| = 1 \}$.  For any given $j$, the maximum of  $L(j,i_{0})$ 
 over all $i_{0}$ serves as an upper bound for  $|f(x)_{j} - f(y)_{j}|$, over all edges $xy\in E(G)$.  So it suffices to show that that for each 
 $1\le j\le k$ this maximum is at most $17$.  Let $x(j)y(j)$ be an 
 edge at which this maximum occurs for a given $j$, and refer to this edge simply as $xy$ since $j$ will be 
 understood by context.

 We begin with $j=1$, starting with the case $i_{0} = 1$ for a bound on $L(1,1)$.  Since $i_{0} = 1$, $x$ and $y$ are corresponding points on successive chains 
 $C_{i}$ and  $C_{i+1}$ of $G(a_{1})$.  So we have $L(1,1) = |f(x)_{1} - f(y)_{1}| = |f_{2}(x)_{1} - f_{2}(y)_{1}|\le 3 < 17$ as required, where the second equality follows 
 from Lemma \ref{whatever}a and the inequality following that from Corollary \ref{2-dim dilation}a.   Now suppose $i_{0} \geq 2$.  Thus $x$ and $y$ agree in their first coordinate, so can be considered as 
 lying on the same chain of $G(a_{1})$.  So similarly by by Lemma \ref{whatever}a and 
 Theorem \ref{two dim properties}h we have  $L(1,i_{0}) = |f(x)_{1} - f(y)_{1}| = |f_{2}(x)_{1} - f_{2}(y)_{1}|\le 2 < 17$.  So max$\{ L(1,i_{0}): 1\le i_{0}\le k \} \le 17$.

 Now suppose that $j=2$.  If $i_{0} = 1$, then again $x$ and $y$ are corresponding points on successive chains of $G(a_{1})$.  Thus 
 $|f_{2}(x)_{2} - f_{2}(y)_{2}|\le 1$ by Theorem \ref{two dim properties}g.  Hence by Lemma \ref{whatever}b (with $e=1$) we get $L(2,1) = |f(x)_{2} - f(y)_{2}|\le 4 < 17$.

 If $i_{0} = 2$, then $x$  and $y$ are successive points on the same chain of $G(a_{1})$, so by Corollary \ref{2-dim dilation}a we have 
 $ |f_{2}(x)_{2} - f_{2}(y)_{2}|\le 1$.  So again by Lemma \ref{whatever}b, $L(2,2) = |f(x)_{2} - f(y)_{2}|\le  4 < 17$.

 Now suppose $i_{0}\geq 3$, still with $j=2$.  Then $x$ and $y$ belong to the same chain $C_{i}$ of $G(a_{1})$.  
 Since $x_{1} = y_{1}$, $x_{2} = y_{2}$, and $i_{0}\geq 3$, we can view $x$ and $y$  
 as corresponding points belonging to a pair of distinct 
 $2$-pages of $G$, say $x\in D_{2}^{u}$ and  $y\in D_{2}^{v}$.  Consider the segments 
 $T_1$ and $T_2$ of $C_{i}$ given by $T_{1} = \{(i,(u-1)a_{2} + t): 1\le t\le x_{2} \}$ and 
 $T_{2} = \{(i,(v-1)a_{2} + t): 1\le t\le x_{2} \}$.  We see that  $f_{2}(T_{1})$ (resp. $f_{2}(T_2)$) is the initial segment of $x_{2}$ points of the chain 
 $C_{i}$ in $D_{2}^{u}$ (resp. $D_{2}^{v}$), and that $f_{2}(x)$ and $f_{2}(y)$ are the last points of these segments respectively.  Let $c_{1}$ 
 and $c_{2}$ be the number of successive columns of $Y_{2}$ spanned by $f_{2}(T_1)$ and $f_{2}(T_2)$ respectively.  By Corollary  \ref{2-dim dilation}d we have 
 $|c_{1} - c_{2}|\le 1$.  The columns spanned by $f_{2}(T_{1})$ (resp. $f_{2}(T_{2})$) are transformed under the inflation map 
 $I_{2}$ into a corresponding set of $c_{1}$ (resp. $c_{2}$) successive nonblank columns in $(I_{2}\circ f_{2})(G)$. Corollary  \ref{2-dim dilation}e and Lemma \ref{blank arithmetic} 
 now imply that $(I_{2}\circ f_{2})(T_1)$ and $(I_{2}\circ f_{2})(T_2)$, being transforms of $f_{2}(T_{1})$ and $f_{2}(T_{2})$ respectively, 
 each begin in either the first nonblank column of their 
 section ($S_{2}^{u}$ for $(I_{2}\circ f_{2})(T_1)$ and $S_{2}^{v}$ for $(I_{2}\circ f_{2})(T_2)$) or the last nonblank column of their 
 preceding section ($S_{2}^{u-1}$ for $(I_{2}\circ f_{2})(T_1)$ and $S_{2}^{v-1}$ for $(I_{2}\circ f_{2})(T_2)$).  Now let $z' = (I_{2}\circ f_{2})(x)$ and 
 $z''= (I_{2}\circ f_{2})(y)$, and assume by symmetry that $c_{2}\geq c_{1}$.  Then  $|c_{1} - c_{2}|\le 1$ implies that 
 $\nu_{2}(z'') - \nu_{2}(z') = 0, 1$ or $2$.  Assume first that 
  $\nu_{2}(z'') - \nu_{2}(z') = 2$, and write $d = \nu_{2}(z')$ and $d+2 =  \nu_{2}(z'')$.  This case arises when $c_{2} - c_{1} = 1$, and $(I_{2}\circ f_{2})(T_2)$ begins in the first 
 nonblank column of $S_{2}^{v}$ while $(I_{2}\circ f_{2})(T_1)$ begins in the last nonblank column of $S_{2}^{u-1}$.  Then by Lemma \ref{whatever}a 
 and Corollary \ref{zeros column index}c with $e=2$, we get 
 $|f(x)_{2} - f(y)_{2}| = |f_{3}(x)_{2} - f_{3}(y)_{2}| =  |N_{2,u}(\nu_{2}(z')) - N_{2,v}(\nu_{2}(z''))| = |N_{2,u}(d) - N_{2,v}(d+2)|\le 2\cdot 2 + 4 = 8$. In the case $\nu_{2}(z'') - \nu_{2}(z') = 1$, we similarly obtain 
 (using $e = 1$ in Corollary \ref{zeros column index}c) that  $|f(x)_{2} - f(y)_{2}|\le 6$, and in the case 
 $\nu_{2}(z'') - \nu_{2}(z') = 0$ (using $e = 0$) we get $|f(x)_{2} - f(y)_{2}|\le 4$.  So we get $L(2,i_{0}) \le 8 < 17$ for $i_{0}\geq 3$, and overall max$\{ L(2,i_{0}): 1\le i_{0}\le k \} \le 17$. .

 Next suppose $j\geq 3$.  We argue according to the order relation between $j$ and $i_{0}$.  
 
 Suppose first that $j > i_{0}$.  Then 
 $x$ and $y$ belong to the same $(j-1)$-page and successive $(i_{0}-1)$-pages of $G$.  We obtain our bound on $L(j,i_{0})$ 
 independent of $i_{0}$.  Since $x$ and $y$ belong to the same $(j-1)$-page we have that     
 $I_{j-1}(f_{j-1}(x))$ and $I_{j-1}(f_{j-1}(y))$ belong to the same 
 $(j-1)$-section or to successive $(j-1)$-sections $S_{j-1}^{t}$ and $S_{j-1}^{t'}$, $|t - t'|\le 1$, by Corollary \ref{containment}a.  Applying Corollary \ref{containment}c, 
 in the first case (same $(j-1)$-section) we get $|f_{j}(x)_{j} - f_{j}(y)_{j}|\le 1$ by part (c1), while in the 
 second case (successive $(j-1)$-sections) we get $|f_{j}(x)_{j} - f_{j}(y)_{j}|\le 2$ by part (c2).  Now we bound $L(j,i_{0})$ by applying 
Lemma \ref{whatever}b (with $e=2$) to obtain $L(j,i_{0}) = |f(x)_{j} - f(y)_{j} | \le 6 < 17$, as required.

 Now suppose $j = i_{0}$. Then $x$ and $y$ are corresponding points in successive $(j-1)$-pages of $G$.  Hence by Corollary \ref{containment}d2 
 we get 
 $|f_{j}(x)_{j} - f_{j}(y)_{j}|\le 3 $.  Thus by Lemma \ref{whatever}b with $e = 3$ we get 
 $L(j,j) = |f(x)_{j} - f(y)_{j}|\le 8 < 17$.

It remains to consider the case $3\le j< i_{0}$.  Here $x$ and $y$ are corresponding points in successive $(i_{0} - 1)$-pages, say $x\in D_{i_{0}-1}^{r}$ and 
$y\in D_{i_{0}-1}^{r+1}$. Consider the $j$-pages containing $x$ and $y$, say $x\in D_{j}^{c}\subset D_{i_{0}-1}^{r}$ and $y\in D_{j}^{d}\subset D_{i_{0}-1}^{r+1}$ 
for suitable integers $c$ and $d$.  Since $x$ and $y$ agree in their first $j$ coordinates, they must also be corresponding points in these $j$-pages.  In particular, 
$x$ and $y$ belong to corresponding $(j-1)$-subpages $D_{j}^{c}(q)$ of $D_{j}^{c}$ and $D_{j}^{d}(q)$ of $D_{j}^{d}$ respectively for the same integer $q$.  
As before, let $z' = (I_{j}\circ f_{j})(x)$ and $z'' = (I_{j}\circ f_{j})(y)$ for brevity.  Then by Corollary \ref{containment}b we get 
 $|\nu_{j}(z') - \nu_{j}(z'') |\le 3 $.  Again applying Lemma \ref{whatever}a we have $f(x)_{j} = N_{j,c}(\nu_{j}(z'))$ and $f(y)_{j} = N_{j,d}(\nu_{j}(z''))$. 
 So applying Corollary \ref{zeros column index}c with $e = 3$ we get   
 $L(j,i_{0}) = |f(x)_{j} - f(y)_{j}|\le |N_{j,c}(\nu_{j}(z')) - N_{j,d}(\nu_{j}(z''))| \le 2\cdot 3 + 4 = 10 < 17$.  So for any $j\geq 3$ we have max$\{ L(j,i_{0}): 1\le i_{0}\le k\}\le 17$, completing the proof.  
 \end{proof}
 
 \section{Concluding Remarks}
 
 1. There is a routing of edge congestion $O(k)$ associated to our embedding $H^{k}$.  We outline the idea here, omitting full details of the 
 proof.
 
 As notation, for any graph $H$ and permutation $\pi: V(H)\rightarrow V(H)$, a $\pi$\textit{-routing} is an assignment $P: V(H)\rightarrow$ $\{$paths in $H \}$
 such that $P(x)$ is a path in $H$ from $x$ to $\pi(x)$.
 If $H$ is directed, then $P(x)$ is a directed path 
 from $x$ to $\pi(x)$.  The \textit{congestion} of a $\pi$-routing is the maximum of $\{ n(e): e\in E(H) \}$, where $n(e)$ is the number of 
 paths in the $\pi$-routing which use the edge $e$. 
 
 For background, let $\overrightarrow {Q_{n}}$ be the directed graph obtained from $Q_{n}$ by replacing each edge of $Q_{n}$ by $4$ directed edges, two pointing in one 
 direction and two in the opposite direction.  Using the classic Benes routing method, one can show \cite{GT} that for any permutation $\pi$ of $\overrightarrow {Q_{n}}$ 
 there exists a $\pi$-routing such that the paths of the routing are edge disjoint.  Consequently, for each permutation $\pi$ of the undirected $Q_{n}$ 
 there is a $\pi$-routing with congestion $O(1)$.
 
 For an undirected graph $H$, suppose there is a partition of $E(H)$ into $k$ sets, $E(H) = \bigcup_{i=1}^{k}E_{i}$, such that each $E_{i}$ is a vertex disjoint union 
 of cycles in $H$, where the vertices of each cycle are ordered in one of the two natural ways.  Further, let $g: H\rightarrow  Q_{n}$ be a one to one map.  
 Now consider the permutation $\pi_{i}$ on $Q_{n}$  as follows: if $v\notin g(H)$ then $\pi_{i}(v) = v$, 
 while if $v\in g(H)$ then $\pi_{i}(v) = w$, where $g^{-1}(v)g^{-1}(w)\in E_{i}$  
 and $g^{-1}(w)$ follows $g^{-1}(v)$ in the ordering of the vertices of the cycle of $E_{i}$ containing $g^{-1}(v)$ and $g^{-1}(w)$.  Then by the above result there is a $\pi_{i}$ routing on $Q_{n}$ of congestion 
 $O(1)$ for each $i$, $1\le i\le k$.  Since each $e\in E(H)$ lies in some $E_{i}$, it would follow that the edge congestion of $g$ (as defined in section \ref{subintro}) is $O(k)$.
 
It therefore suffices to 
 find a graph $G'\supset G = [a_{1}\times a_{2}\times \ldots \times a_{k}]$ with $V(G') = V(G)$ such that $G'$ has the required cycle partition of edges.  For then 
 (letting our map $H^{k}$ play the role of $g$ in the above paragraph) the map 
 $H^{k}: G'\rightarrow Opt(G)$ has edge congestion $O(k)$ by the above argument, so the same is true of its restriction  $H^{k}: G \rightarrow Opt(G)$.

The graph $G'$ is obtained from $G$ as follows.  
For each $i$, $1\le i\le k$, consider a 
 fixed $(k-1)$-tuple $\vec{c}(i) = (c_{1}, c_{2}, \cdots ,c_{i-1}, c_{i+1}, \cdots ,c_{k})$ of 
 integer entries, with $1\le c_{j}\le a_{j}$ for $j\ne i$.  Further, for $1\le t\le a_{i}$ let  
 $v(\vec{c}(i),t) = (c_{1}, c_{2}, \cdots ,c_{i-1}, t, c_{i+1}, \cdots ,c_{k})\in V(G)$.
We let $V(G') = V(G)$ and $E(G') = E(G)\cup E'$, where 
$E' = \{ v(\vec{c}(i),a_{i}) v(\vec{c}(i),1): 1\le i\le k,$ $\vec{c}(i)$ any $(k-1)$-tuple as above$\}$.  Thus $E'$ is just the set of 
``wraparound" edges not present in $G$ in each of the $k$ dimensions.  The required cycle partition of $E(G')$ is given by 
$E(G') = \bigcup_{i=1}^{k} E_{i}'$, where 
$E_{i}' = \{ v(\vec{c}(i),a_{i}) v(\vec{c}(i),1), v(\vec{c}(i),t) v(\vec{c}(i),t+1): 1\le t\le a_{i}-1, 1\le i\le k$, $\vec{c}(i)$ any $(k-1)$-tuple as above$\}$.

2. The lower bound requirement $a_{i}>2^{22}$ for our result can be relaxed to $a_{i}>2^{12}$, provided one can improve the conclusion of Corollary \ref{labeling}a 
only slightly to say $|L_{t}(x) - L_{t}(y)|\le 2r+4 \Rightarrow dist_{Q_{t}}(x,y)\le 3$.  Then using $r=3$ and $i=2$ we obtain 
$|L_{t}(x) - L_{t}(y)|\le 10 \Rightarrow dist_{Q_{t}}(x,y)\le 3$ for $t\geq 12$, so that $a_{i}>2^{12}$ suffices for our result.  The proof of Theorem \ref{dilation bound} is then reduced (as in its first paragraph) 
to proving the inequality $|f(x)_{j} - f(y)_{j}|\le 10$ for each $1\le j\le k$ and $xy\in E(G)$.  The proof of Theorem \ref{dilation bound} shows that this inequality does indeed hold.  The improvement in 
Corollary \ref{labeling}a may require a detailed study of the regular cyclic caterpillars we used.     

3. The question of finding good lower bounds for $B(G,Opt(G))$, for some class of multidimensional grids $G$ with $|V(G)|\rightarrow \infty$, remains 
open.  A nontrivial lower bound for all multidimensional grids is of course not possible, since if the $a_{i}$ are all powers of $2$, then $G$ is a spanning subgraph of 
$Opt(G)$, so $B(G,Opt(G)) = 1$ in that case.

4. The first author thanks Stephen Wright for many useful discussions on rounding of matrices.  We also acknowledge a useful discussion with Hal Kierstead which inspired our 
application of regular cyclic caterpillars.

\section{Appendix 1: Proofs of Theorem  \ref{two dim properties} and Corollary \ref{2-dim dilation}} 
 
\underline{Proof of Theorem \ref{two dim properties}}: Part (a) follows directly from steps 2a and 2b of the construction of $f$.  In part (b), the claim that $L_{r}(j)$ is an initial segment 
of  $Y_{2}^{j}$ follows directly from part (a), step 2 of the construction of $f$, and induction on $r$.  The claim on $|L_{r}(j)|$ follows 
from step 2 of this construction on noting that $C_i$ contributes $1$ (resp. $2$) to this sum if $R_{ij} = 0$ (resp. $R_{ij} = 1$).

Consider (c).  By part (a), $\sum_{t=1}^{j}R_{it}$ is the number of columns $Y_{2}^{t}$, $1\le t\le j$, such that 
$|f(C_{i})\cap Y_{2}^{t}| = 2$, while in the remaining columns $Y_{2}^{t}$, $1\le t\le j$, we have $|f(C_{i})\cap Y_{2}^{t}| = 1$.  
So $\pi(i,1,j) = N_{ij}$ as claimed.
Also observe that $\pi(i,r\rightarrow r+j) = N_{i,r+j} -  N_{i,r-1} =  j+1+ \sum_{t=r}^{r+j}R_{it} \in \{j+1+S_{j+1},  j+2+S_{j+1} \}$ 
by the formula for $N_{ij}$ and Theorem \ref{matrix R properties}b.     
The same equation 
with $r$ replaced by $s$  
yields the rest of (c). 
 
Part (d) follows from parts (b), (c), Lemma \ref{matrix R properties}b, and the fact that $S_{r+1} - S_{r} \le 1$.  For (e), by part (c) (taking $j=m$) and the 
definition of $G(a_{1},N_{im})$ we see that $f(G(a_{1},N_{im}))\subseteq Y_{2}^{(m)}$.  So it suffices to show that $|f(G(a_{1},N_{im}))| = |Y_{2}^{(m)}|$.  
We have $|f(G(a_{1},N_{im}))| = \sum_{i=1}^{a_1} \sum_{j=1}^{m}N_{ij} 
=  \sum_{i=1}^{a_1}(m + \sum_{t=1}^{m}R_{it}) = a_{1}m + m(2^{L_1}-a_{1}) = m2^{L_1} = |Y_{2}^{(m)}|$, using Lemma \ref{matrix R properties}a. 

Consider (f). Recall the notation $P_{1} = a_{2}a_{3}\cdots a_{k}$ for the number of $1$-pages in $G$. 
We use the fact that $G$ can be identified with the the subgraph of $G(a_1)$ induced by the union 
of initial segments $\bigcup_{i=1}^{a_1}C_{i}(P_1)$ via the map $\kappa$ of section \ref{the 2D mapping}.  
For $j = m-1$ or $m$ let $M_{j}' = min\{N_{ij}: 1\le i\le a_{1} \}$, 
and $M_{j}'' = max\{N_{ij}: 1\le i\le a_{1} \}$.  
We claim that $M_{m}' \geq  P_1$.  
If not, then $M_{m}' <  P_1$, so $M_{m}''\le P_1$ by (d).  It follows that 
$|Y_{2}^{(m)}| = |f(G(a_{1},N_{im}))| = \sum_{i=1}^{a_1}N_{im} < a_{1}P_{1} = |G| $.  But this contradicts 
$|Y_{2}^{(m)}| = m2^{e_1} \geq |G|$, proving that $M_{m}'\geq  P_1$.  Hence for each $1\le i\le a_1$
we have $N_{im}\geq P_{1}$, implying that 
$G\subseteq G(a_{1},N_{im})$ by the identification of $G$ above.  To show $Y_{2}^{(m-1)}\subset f(G)$, observe
from the definition of $m$ that $|Y_{2}^{(m-1)}| < |G|$.  Thus $\sum_{i=1}^{a_1}N_{i,m-1} = |Y_{2}^{(m-1)}| < |G| = a_{1}P_{1}$.  
So one of the terms in the last sum is less than $P_1$.  It follows that $M_{m-1}' < P_1$ and hence  $M_{m-1}'' \le P_1$ 
by (d).  This says that for each $i$, $f(i,P_{1} + 1)_{2}\geq m$, and hence that $f(i,j)_{2}\geq m$ for 
$j\ge P_{1} + 1$ by the monotonicity of $f$ from part (a).   
It follows that 
$Y_{2}^{(m-1)}\subset f( \bigcup_{i=1}^{a_1}C_{i}(P_{1})) = f(G)$, as required.

For (g), we induct on $r$.  The base case is clear since $f(i,1)_{2} = 1$ for all $i$.  Assume inductively 
that $|f(i,r)_{2} - f(j,r)_{2}| \le 1$ for some $r>1$, and let $c = f(i,r)_{2}$.  
If $f(j,r)_{2} = f(i,r)_{2} = c$, then we are done since $ f(i,r+1)_{2} = c$ or $c+1$ and the same holds for $f(j,r+1)_{2}$.  
So suppose $|f(i,r)_{2} - f(j,r)_{2}| = 1$, and without loss of generality that $f(j,r)_{2} = c+1$ (the case $f(j,r)_{2} = c-1$ being symmetric).  Then 
$N_{j,c} \le r-1$ by (c) and $\sum_{t=1}^{c}R_{it} = 1+ \sum_{t=1}^{c}R_{jt}$ by Lemma \ref{matrix R properties}.  
If $f(j,r+1)_{2} = f(j,r)_{2} = c+1$, then we are 
done since $f(i,r)_{2} \le f(i,r+1)_{2} \le f(i,r)_{2} + 1$.  So we may assume that $f(j,r+1)_{2} = c+2$.  If now 
$f(i,r+1)_{2} = c+1$, then we are done again.  So we can also assume that $f(i,r+1)_{2} = c$.  Then 
we get $N_{ic} \geq r+1 \geq N_{jc} +2$, contradicting $\sum_{t=1}^{c}R_{it} = 1+ \sum_{t=1}^{c}R_{jt}$.

Consider (h).  Set $ c = f(i,y)_2$ for arbitrary $y$, $1\le y\le N_{im}$.  It suffices to show that $f(i,y)_1$ 
must be one of three consecutive integers which depend only on $i$, but not on $y$.     
By (b), the set $L_{i-1}(c)$ is the initial segment of 
size $i-1 + \sum_{t=1}^{i-1} R_{tc}$ in $Y_{2}^{c}$.  So by Lemma \ref{matrix R properties}, $|L_{i-1}(c)|$ 
is either $i-1 + S_{i-1}$ or  $i + S_{i-1}$.  Now $f(C_i)\cap Y_{2}^{c}$ consists of either one or two successive points 
of $Y_{2}^{c}$ which immediately follow the point $(|L_{i-1}(c)|,c)$.  Hence $f(i,y)$ must be one of the 
three successive integers $i+S_{i-1}, i+1+S_{i-1}$, or $ i+2+S_{i-1}$, proving (h).

Finally consider (i).  The proof is based on $R^{j+1}$ 
being a downward shift of $R^{j}$ (with wraparound) for any $j$.  We refer to this property as the ``downward shift" property.  
The assumption $|f(C_{r})\cap Y_{2}^{j}| = 2$ says that $R_{rj} = R_{r-j+1,1} = 1$ (viewing subscripts modulo $a_1$), 
by the downward shift property.  Hence by part (b) and this same  property, 
we have $|L_{r}(j)| -  |L_{r}(j+1)| = R_{rj} - R_{1,j+1}\geq 0$, since $R_{rj} = 1$.  For the second claim, observe that 
$N_{r,j} = j +  \sum_{i=1}^{j}R_{ri} = j +  \sum_{i=r-j+1}^{r}R_{i1}$ by the downward shift property.  Similarly we have 
$N_{r+1,j} = j +  \sum_{i=r-j+2}^{r+1}R_{i1}$.  Therefore $N_{r,j} - N_{r+1,j} = R_{r-j+1,1} - R_{r+1,1}\geq 0$, since 
$ R_{r-j+1,1} = 1$.  

This completes the proof of the theorem.

\bigskip

\noindent \underline{Proof of Corollary \ref{2-dim dilation}}: As a convenience, we prove these properties with $f$ replacing $f_2$, to facilitate direct 
 reference to the construction above.  Of course $f_2$ then inherits these properties since it is a restriction of $f$.  
 For brevity let $d_{1} = |f(v)_{1} - f(w)_{1}|$ and $d_{2} = |f(v)_{2} - f(w)_{2}|$.
 
 Consider part (a).  We get $d_{2}\le 1$ by Theorem \ref{two dim properties}a if $v$ and $w$ are successive 
 points on the same chain, and by Theorem \ref{two dim properties}g otherwise (i.e., if $v = (i,j)$ and $w = (i+1,j)$ 
 for some $i$ and $j$).  We also get $d_{1}\le 2$ by Theorem \ref{two dim properties}h if $v$ and $w$ are successive 
 points on the same chain. So we are reduced to showing that $d_{1}\le 3$ 
when, say, $v = (i,j)$ and $w = (i+1,j)$ for some $i$ and $j$.  Let  $f(v)_{2} = c$.  If also  $f(w)_{2} = c$ 
 then since by Theorem \ref{two dim properties}a,b we have that $(f(C_i)\cup f(C_{i+1}))\cap Y_{2}^{c}$ is a set 
 of at most $4$ successive points of $ Y_{2}^{c}$, it follows in this case that $d_{1}\le 3$.  
 
 So let $f(w)_{2} = c'\ne c$, noting that then $|c - c'| = 1$ since
 we have already shown that $d_{2}\le 1$.  By Theorem  \ref{two dim properties}a,b we have 
 $f(v)_{1} = |L_{i}(c)|$ or $|L_{i}(c)|-1$, and similarly  $f(w)_{1} = |L_{i+1}(c'+1)|$ or $|L_{i}(c')|-1$.  By 
 Theorem  \ref{two dim properties}d we have $| |L_{i+1}(c')| - |L_{i}(c)| |\le 2$, so $d_{1}\le 3$ follows, completing part (a).         
 
 The proof of (b) is a tedious case analysis showing that the bound on $d_{1}$ from part (a) 
 can be strengthened to $d_{1}\le 2$, which combined with (a) yields (b).  The proof is omitted here for brevity. 
 We include (b) only for its possible interest, and do not use it later.  
   
Consider part (c).  Since  $m = \lceil \frac {|G|}{2^{e_{1}}} \rceil$, we see that $Y_2^{(m)}$ is a subgraph of the spanning subgraph 
$Y_{2}^{(2^{e_{k}-e_{1}})}$ of $Opt(G)$.  Thus by Theorem \ref{two dim properties}e,f we obtain 
$f_{2}(G)\subset f(H) = Y_{2}^{(m)}\subset Opt(G)$, as required.

Consider next part (d).  As notation, for $a\le b$ let $Y_{2}^{(a\rightarrow b)} = \bigcup_{t=a}^{b}Y_{2}^{t}$ be the 
union of columns $a$ through $b$ of $Y_2$.  Let then $Y_{2}^{(r\rightarrow r+c-1)}$ and $Y_{2}^{(s\rightarrow s+c'-1)}$ be the sets 
of columns of $Y_2$ spanned by $f_{2}(T)$ and $f_{2}(T')$ respectively.  
Suppose to the contrary that 
$|c - c'|\geq 2$.  By symmetry we can assume that $c'\geq c+2$.

Let $S = f_{2}(T)\cap Y_{2}^{(r\rightarrow r+c-1)}$, and let $S' =  f_{2}(T')\cap Y_{2}^{(s+1\rightarrow s+c)}$.  
Now for each $t, 1\le t\le s+c'-1,$ we have $f_{2}(T')\cap Y_{2}^{t} = C_{j}\cap Y_{2}^{t}$ with the possible exception 
when $t=s$ (resp. $t=s+c'-1$), where possibly $f_{2}(T')$ contains just the second (resp. first) of two points 
of $C_{j}\cap Y_{2}^{t}$.  So since $c< c'-1$ it follows that  
$S' = f_{2}(C_{j})\cap Y_{2}^{(s+1\rightarrow s+c)}$.  So $|S'| = \pi(j,s+1\rightarrow s+c)$.  Thus 
by Theorem \ref{two dim properties}c we have $||S'| - \pi(i,r\rightarrow r+c-1)|\le 1$.  Now $|S'|\le p-2$, 
since $S'$ omits at least the two endpoints of the path $f_{2}(T')$.  But also $\pi(i,r\rightarrow r+c-1)\geq |S|\geq p$, 
a contradiction.  

Finally we prove part (e).  The containment in (e) follows from (d) (also from Theorem \ref{two dim properties}e,f).  The bound on $|Y_{2}^{(r')} - f_{2}(D_{2}^{(r)})|$ 
follows from  $Y_{2}^{(r'-1)}\subset f_{2}(D_{2}^{(r)})$, since by that containment 
the set $Y_{2}^{(r')} - f_{2}(D_{2}^{(r)})$ is a proper subset of the column $Y_{2}^{r'}$.

\section{Appendix 2: Glossary of notation}

$\bullet$ $\mathbf{G = [a_{1}\times a_{2}\times \cdots \times a_{k}  ]}$: the  $k$-dimensional grid, a graph with   
vertex set $V(G) = \{ x = (x_{1}, x_{2}, \ldots, x_{k}): x_{i}$ an integer, $1\le x_{i}\le a_{i} \}$ and edge set 
$E(G) = \{ xy: \sum_{i=1}^{k}|x_{i} - y_{i}| =1   \}$

\noindent $\bullet$ $\mathbf{P(t)}$:  the path graph on $t$ vertices 

\noindent $\bullet$ $\mathbf{e_{i}} = \lceil log_{2}(a_{1}a_{2}\cdots a_{i})  \rceil$ for $1\le i\le k$, with $e_{0} = 0$ 

\noindent $\bullet$ $\mathbf{i}$-\textbf{page}:  a subgraph of $G$ obtained by fixing the last $k-i$ coordinates, and letting the first 
$i$ coordinates vary over all their possible values  

\noindent $\bullet$  $\mathbf{D_{i}^{r}}$, $\mathbf{D_{i}^{(r)}}$, $\mathbf{D_{i}^{r}(j)}$: $D_{i}^{r}$ is the $r$'th $i$-page of $G$ under the ordering of $i$-pages given as follows.  
Let $D_{i}$ and  $D_{i}'$ be two $i$-pages, with fixed last $k-i$ coordinate values $c_{i+1}, c_{i+2},\ldots ,c_{k}$ 
and $c_{i+1}', c_{i+2}',\ldots ,c_{k}'$ respectively.  Then $D_{i} \prec_{i} D_{i}' $ in this ordering 
if at the maximum index $t$, $i+1\le t\le k$, where $c_{t}\ne c_{t}'$ we have $c_{t} < c_{t}'$.  $D_{i}^{(r)} = \cup_{j=1}^{r}D_{i}^{j}$. 
$D_{i}^{r}(j) = D_{i-1}^{(r-1)a_{i}+j}$, and we regard $D_{i}^{r}(j)$ as the 
 $j$'th  $(i-1)$-subpage of $D_{i}^{r}$ under the ordering of $(i-1)$-subpages of $D_{i}^{r}$ induced by $\prec_{i-1}$.

\noindent $\bullet$  $\mathbf{ P_{i}} = a_{i+1}a_{i+2}\ldots a_{k} $, the number of $i$-pages in $G$.

\noindent $\bullet$  $\mathbf{\langle Y_{i} \rangle}$ for $1\le i\le k$: the $i$-dimensional grid given by
\newline $\langle Y_{i} \rangle = P(2^{e_{1}})\times P(2^{e_{2}-e_{1}})\times P(2^{e_{3}-e_{2}})\times \cdots \times P(2^{e_{i}-e_{i-1}}) .$  Note that 
$\langle Y_{i} \rangle$ is a spanning subgraph of $Q_{e_{i}}$, the hypercube of dimension $e_{i}$.

\noindent $\bullet$  $\mathbf{Y_{i}}$: the $i$-dimensional grid given by $Y_{i} = \langle Y_{i-1} \rangle \times P(l_{i})$, where $l_{i}$ is any 
 integer satisfying $l_{i}\geq 2^{e_{k}-e_{i-1}}$
 
 \noindent $\bullet$  $\mathbf{(i-1)}$-\textbf{level of} $\mathbf{Y_{i}}$: the subgraph of $Y_{i}$ consisting of all points with some fixed value $c$ in the $i$'th 
 coordinate, $1\le c\le l_{i} $.
 
 \noindent $\bullet$  $\mathbf{Y_{i}^{j}}$: $Y_{i}^{j}$ is the $(i-1)$-level of $Y_{t}$ consisting of all points with fixed coordinate value $j$ 
 in the $i$'th coordinate.       $(x_{1}, x_{2}, \dots, x_{t-1}, j )$.
  
 \noindent $\bullet$  $\mathbf{Y_{i}^{(r)}} = \bigcup_{j=1}^{r}Y_{i}^{j}$.
 
 \noindent $\bullet$  $\mathbf{i}$-\textbf{section of} $\mathbf{Y_{i}}$,    $\mathbf{S_{i}^{j}}$, $\mathbf{S_{i}^{j}(G)}$, $\mathbf{S_{i}^{j}(G)'}$, $\mathbf{S_{i}^{j}(c)}$: $S_{i}^{j} = \bigcup_{j=a}^{b}Y_{i}^{j}$, 
 where $a = 1+ (j-1)2^{e_{i}-e_{i-1}}$ and $b = j2^{e_{i}-e_{i-1}}$.  We call $S_{i}^{j}$ 
 the $j$'th ``$i$-section" of $Y_{i}$.  $S_{i}^{j}(G)$ is the set of points of $S_{i}^{j}$ lying in nonblank $(i-1)$-levels of $S_{i}^{j}$. $S_{i}^{j}(G)' = S_{i}^{j}(G)\cap (I_{i}\circ f_{i})(G)$, so 
 $S_{i}^{j}(G)'$ is the set of points lying in nonblank columns of $S_{i}^{j}$ which are also in the image of $I_{i}\circ f_{i}$.  $S_{i}^{j}(c)$ is the $c$'th nonblank $(i-1)$-level of 
 $S_{i}^{j}$ in order of increasing $i$'th coordinate.  We interpret $S_{i}^{j}(c)$ for certain $c$ using wraparound (see the comments preceding Theorem \ref{containment}).            
 
\noindent $\bullet$  $\mathbf{S_{i}^{(r)}}$, $\mathbf{S_{i}^{(r)}(G)}$, $\mathbf{S_{i}^{(r)}(G)'}$: $S_{i}^{(r)} =  \bigcup_{j=1}^{r}S_{i}^{j}$, the union of the first $r$ many $i$-sections of $Y_{i}$. $S_{i}^{(r)}(G)$ is the 
  set points in $S_{i}^{(r)}$ lying in nonblank $(i-1)$-levels of $S_{i}^{(r)}$. $S_{i}^{(r)}(G)' = S_{i}^{(r)}(G)\cap (I_{i}\circ f_{i})(G)$.   
  
\noindent $\bullet$  $\mathbf{s_{i}(j)}$: the number of $(i-1)$-levels in $S_{i}^{j}$ which are designated blank.  By construction we have 
$s_{i}(j) = 2^{e_{i} - e_{i-1}} - \lceil \frac{a_{1}a_{2}\cdots a_{i}}{2^{e_{i-1}}} \rceil + \lfloor  j\phi_{i} \rfloor - \lfloor  (j-1)\phi_{i} \rfloor $, 
where $\phi_{i} = \lceil \frac{a_{1}a_{2}\cdots a_{i}}{2^{e_{i-1}}} \rceil -  \frac{a_{1}a_{2}\cdots a_{i}}{2^{e_{i-1}}}$. 
  
\noindent $\bullet$  $\mathbf{F(i)}$: the $P_{i}\times 2^{e_{i}-e_{i-1}}$, $(0,1)$ matrix $F(i) = f_{uv}(i)$ that encodes which $(i-1)$-levels of $S_{i}^{(P_{i})}$ 
are to be designated blank as follows.  We have $ f_{uv}(i) = 1$ if the $v$'th $(i-1)$-level of $S_{i}^{u}$ (which is $Y_{i}^{v + (u-1)2^{e_{i}-e_{i-1}} }$) is blank, 
and $ f_{uv}(i) = 0$ otherwise.  In particular, the sum of entries in the $u$'th row of $F(i)$, being the number of blank $(i-1)$-levels in $S_{i}^{u}$, is $s_{i}(u)$.  Note that 
$F(i)$ is constructed in the procedure following Corollary \ref{zeros column index}.

\noindent $\bullet$ $\mathbf{N_{r}(d)}$, $\mathbf{N_{i,r}(d)}$: For a fixed $i$ and $1\le r\le P_{i}$, $N_{r}(d)$ is the column index of the $d'$th $0$ from the left of row $r$ of matrix $F(i)$.  When 
$i$ can vary, we let $N_{i,r}(d)$ be the $N_{r}(d)$ just defined relative to matrix $F(i)$.  Further we interpret the function $N_{r}(*)$ using "wraparound" (see the comments preceding 
Corollary \ref{zeros column index}).

\noindent $\bullet$  $\mathbf{Opt(G)}$, $\mathbf{Opt'(G)}$ for $G = [a_{1}\times a_{2}\times \cdots \times a_{k}  ]$: 
$Opt(G) = Q_{n}$ with $n = \lceil log_{2}(|V(G)|)   \rceil$. This is the 
hypercube of smallest dimension having at least as many vertices as $G$.  
$ Opt'(G) = P(2^{e_{1}})\times P(2^{e_{2}-e_{1}})\times P(2^{e_{3}-e_{2}})\times \cdots \times P(2^{e_{k}-e_{k-1}})$, a spanning subgraph of $Opt(G).$

\noindent $\bullet$ $\mathbf{u_{i} = u_{i}(G)}$: For $G$ a $k$-dimensional grid, we let $u_{i} = \lceil \frac{|G|}{2^{e_{i-1}}} \rceil$.  For each $i$, $2\le i\le k$, we constructed 
a map $f_{i}: G\rightarrow Y_{i}^{(u_{i})}$; that is, a map of $G$ into the first $u_{i}$ many $(i-1)$-levels of $Y_{i}$.  Each such $(i-1)$-level has size $2^{e_{i-1}}$, 
so  $u_{i}$ is the minimum number of $(i-1)$-levels required in the image of any one to one map $G\rightarrow Y_{i}$.

\noindent $\bullet$  $\mathbf{x_{i\rightarrow j}}$ $= (x_{i}, x_{i+1}, x_{i+2}, \ldots, , x_{j-1}, x_{j})$, for a $t$-tuple $x = (x_{1}, x_{2}, x_{3}, \ldots, x_{t})$, $1\le i,j\le t$.

\noindent $\bullet$  $\mathbf{I_{i}}$: the ``inflation" map $I_{i}: f_{i}(G)\rightarrow S_{i}^{(P_{i})}(G)$ defined in step $3$ (The Inflation Step) of the construction 
of $f_{i+1}$ as follows.  For any $z = (z_{1},z_{2},\ldots,z_{i})\in f_{i}(G)$, let $I_{i}(z) = (z_{1},z_{2},\ldots,z_{i-1}, z_{i}'),$ where 
$Y_{i}^{z_{i}'}$ is the $z_{i}$'th nonblank level in $S_{i}^{(P_{i})}$ in order of increasing $i$'th coordinate.

\noindent $\bullet$  \mbox{\boldmath$\sigma_{i}$}: the ``stacking" map $\sigma_{i}: S_{i}^{(P_{i})}(G)'\rightarrow S_{i}^{1}\times P(u_{i+1})\subseteq Y_{i+1}^{(u_{i+1})}$ 
defined in step $4$ (The Stacking Step) of the construction 
of $f_{i+1}$.  Informally, $\sigma_{i}$ stacks the sets $S_{i}^{r}(G)'$ ``on top of" $S_{i}^{1}\cong \langle Y_{i}\rangle$ in order of increasing $r$ as follows.  Suppose 
$x = (x_{1}, x_{2}, \ldots, x_{i})$ lies in $S_{i}^{r}(G)'$.  Then let   
 $\sigma_{i}(x) = (x_{1}, x_{2}, \ldots, x_{i-1}, \bar{x}_{i}, c)$, where $\bar{x}_{i}$, $1\le \bar{x}_{i}\le 2^{e_{i}-e_{i-1}}$,  is the integer congruent to $x_{i}$ mod $2^{e_{i}-e_{i-1}}$, and 
 $c = \sigma_{i}(x)_{i+1}$ is the number of images $\sigma_{i}(y)$ with $y\in S_{i}^{j}(G)$, $1\le j\le r$, and $\sigma_{i}(x)_{1\rightarrow i} = \sigma_{i}(y)_{1\rightarrow i} $. 

\noindent $\bullet$  \mbox{\boldmath${\bar \sigma_{i}}$}: ${\bar \sigma_{i}}$ is the projection of $\sigma_{i}$ onto the first $i$ coordinates.  So for $z\in S_{i}^{(P_{i})}(G)'$, we have 
${\bar \sigma_{i}}(z) = \sigma_{i}(z)_{1\rightarrow i}$.  By Lemma \ref{whatever}c, the restriction of ${\bar \sigma_{i}}$ to any one $i$-section is one to one.   

\noindent $\bullet$  $\mathbf{Stack_{i}(x,r)}$: for $x\in S_{i-1}^{1}\cong \langle Y_{i-1}  \rangle$ and $1\le r\le P_{i-1}$ (defined in the paragraph just preceding Lemma \ref{whatever}):         
$Stack_{i}(x,r) = \{ z = \sigma_{i-1}(y): z_{1\rightarrow i-1}= x$ and $y\in S_{i-1}^{(r)}(G)'  \}$.  We regard $Stack_{i}(x,r)$ as a stack, with ``height" extending 
into the $i$'th dimension, addressed by $x\in \langle Y_{i-1}  \rangle$.  Its elements are images 
$ \sigma_{i-1}(y)\in Y_{i}$, with $y\in S_{i-1}^{(r)}(G)' $, having projection $\sigma_{i-1}(y)_{1\rightarrow i-1} = x$ onto the first $i-1$ coordinates.     

\noindent $\bullet$  $\mathbf{[r]_{i}}$ $=$ max $\{|Stack_{i}(x,r)|: x\in \langle Y_{i-1} \rangle  \} $.  This is the maximum ``height" of $Stack_{i}(x,r)$, over all $x\in \langle Y_{i-1} \rangle  $.

\noindent $\bullet$ $\mathbf{v_{i,r}(S)}$: for $S\subseteq Y_{i}^{([r]_{i})}$, let $v_{i,r}(S) = |S \cap f_{i}(D_{i-1}^{(r)}) |$.

\noindent $\bullet$ $\mathbf{\nu_{i}(z)}$: For $z = (I_{i}\circ f_{i})(x)$, where $x\in D_{i}^{r}$, $\nu_{i}(z)$ is the integer such that $z\in S_{i}^{r}(\nu_{i}(z))$.  That is, 
$z$ belongs to the $\nu_{i}(z)$'th nonblank $(i-1)$-level (ordered by increasing $i$-coordinate) of the $i$-section $S_{i}^{r}$ containing $z$.  We interpret differences 
$\nu_{i}(z') - \nu_{i}(z'')$ using wraparound (see the comments preceding Theorem \ref{containment}).

 \section{ Appendix 3: Figures}

\begin{figure}[ht]
\centering
\includegraphics[scale=0.35]{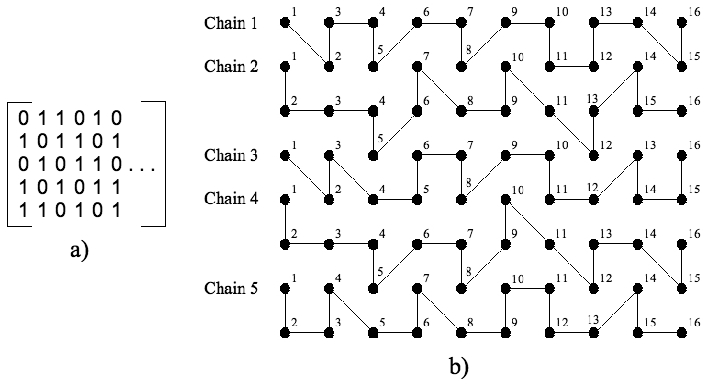}
\caption{ a) The matrix $R$, and b) the corresponding map $f:G(5)\rightarrow Y_{2}$ }\label{layout of G(5)}
\end{figure}

\begin{figure}[ht]
\centering
\includegraphics[scale=0.40]{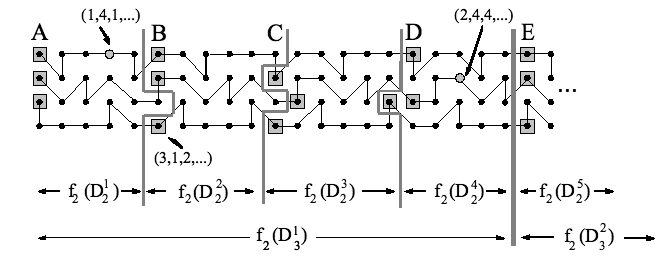}
\caption{ The map $f_{2}:[3\times 7\times 4\times a_{4}]\rightarrow Y_{2}$, with $a_{4}>1$ }\label{3by7}
\end{figure}

\begin{figure}[ht]
\centering
\includegraphics[scale=0.47]{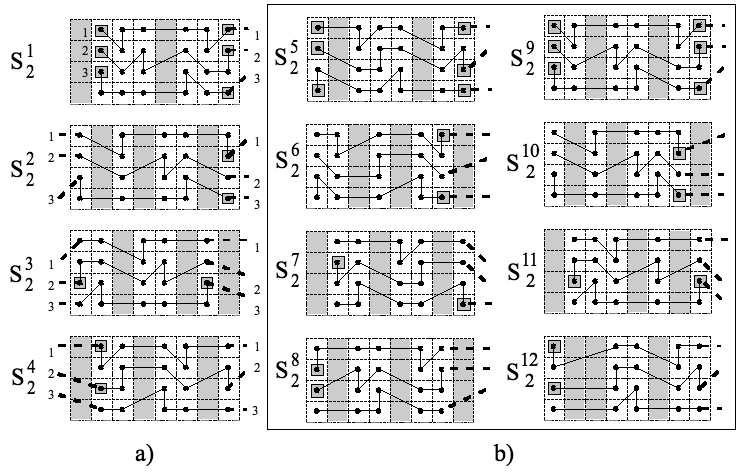}
\caption{ a) $I_{2}:Y_{2}^{(21)}\rightarrow S_{2}^{(4)}$, used for constructing $f_{3}([3\times 7\times 4])$; 
a) and b) combined $I_{2}:Y_{2}^{(63)}\rightarrow S_{2}^{(12)}$, used for constructing $f_{3}([3\times 7\times 4\times 3])$}\label{1thru12}
\end{figure}

\begin{figure}[ht]
\centering
\includegraphics[scale=0.43]{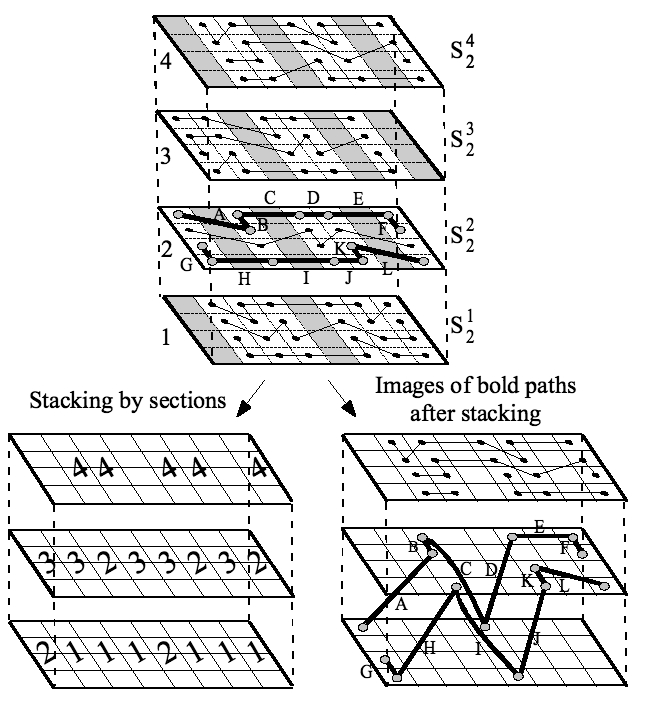}
\caption{The stacking map $\sigma_{2}:(I_{2}\circ f_{2})([3\times 7\times 4])\rightarrow Y_{3}^{(3)}$; stacking the four sections $S_{2}^{j}$, $1\le j\le 4$}\label{stack4sections}
\end{figure} 

\begin{figure}[ht]
\centering
\includegraphics[scale=0.40]{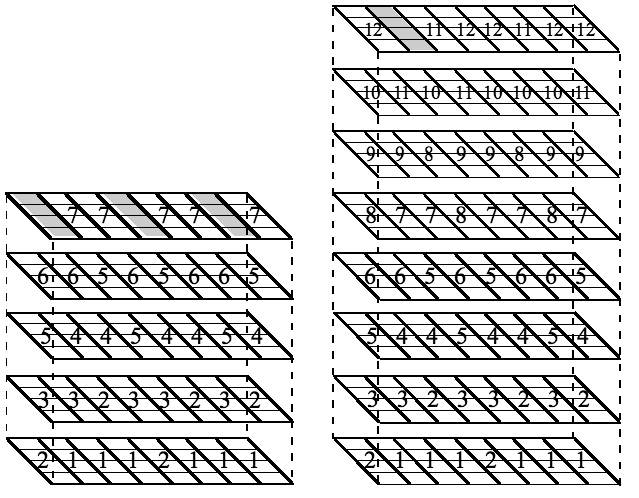}
\caption{ $\sigma_{2}$ applied to the first $7$ and all $12$ sections $S_{2}^{j}$, yielding the map 
$f_{3}:[3\times 7\times 4\times 3]\rightarrow Y_{3}^{(8)}$}\label{7and12stacking}
\end{figure}

\begin{figure}[ht]
\centering
\includegraphics[scale=0.48]{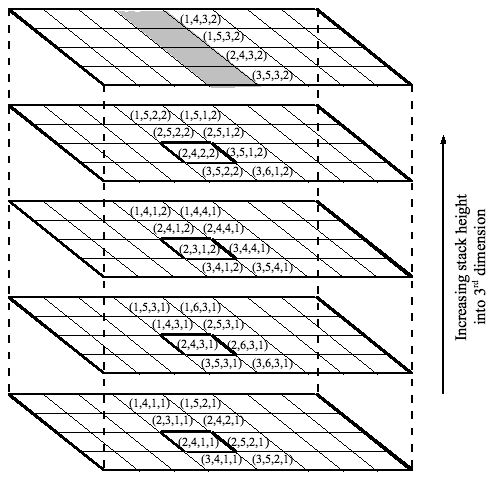}
\caption{Contents of stacks in columns 4 and 5 after stacking the first 7 sections $S_{2}^{j}$, $1\le j\le 7$; points are labeled by their preimages in $G$}\label{columns 4 and 5}
\end{figure}

\begin{figure}[ht]
\centering
\includegraphics[scale=0.43]{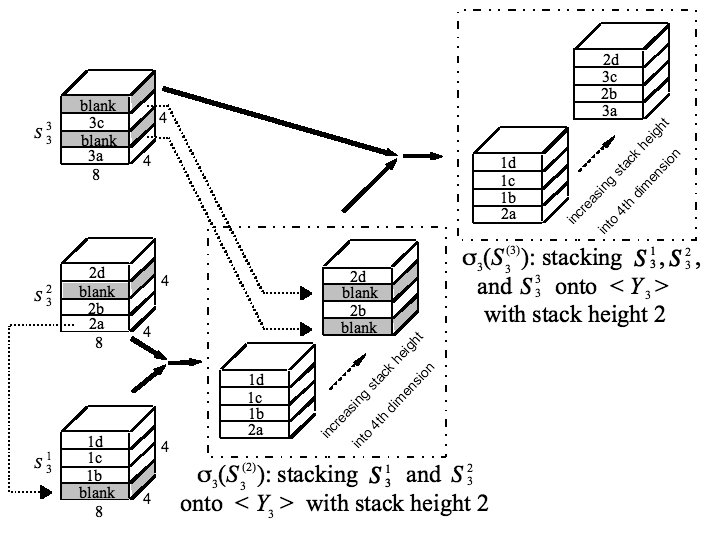}
\caption{The stacking map $\sigma_{3}(S_{3}^{(3)})\subseteq Y_{4}^{(2)}$, yielding the map $f_{4}:[3\times 7\times 4\times 3]\rightarrow Y_{4}^{(2)}$}\label{stacking in 4D}
\end{figure}

\begin{figure}[ht]
\centering
\includegraphics[scale=0.48]{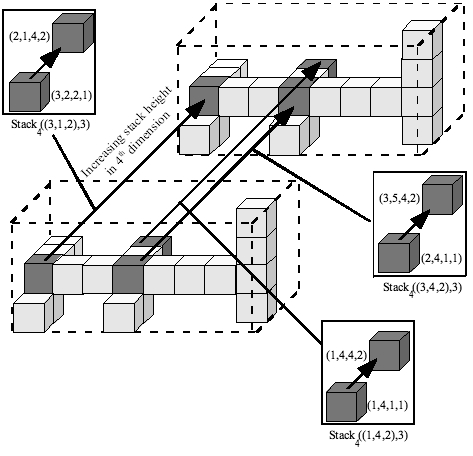}
\caption{Three individual stacks, each addressed by points of $S_{3}^{1}\cong \langle Y_{3} \rangle$, each of height $2$, extending into the $4$'th dimension}\label{detail in 4D}
\end{figure}

\end{document}